	\numberwithin{equation}{section}
	\numberwithin{figure}{section}
	\newtheorem {theorem}{Theorem}[section]
	\newtheorem {proposition}[theorem]{Proposition}
	\newtheorem {lemma}[theorem]{Lemma}
	\newtheorem {corollary}[theorem]{Corollary}
\theoremstyle{definition}
		\newtheorem{definition}[theorem]{Definition}
		\newtheorem{example}[theorem]{Example}
		\newtheorem{remark}[theorem]{Remark}
\DeclareMathOperator{\Id}{\mathrm{I}}
\DeclareMathOperator{\N}{\mathbb{N}}
\DeclareMathOperator{\R}{\mathbb{R}}
\DeclareMathOperator{\E}{\mathds{E}}
\newcommand{\supp}{\mathrm{supp}}
\newcommand{\norm}[1]{\|#1 \|}
\newcommand{\abs}[1]{|#1 |}
\newcommand{\as}{\text{a.s.}}
\newcommand{\skalar}[1]{\langle #1 \rangle}
\newcommand{\RR}{\mathbb{R}}
\newcommand{\QQ}{\mathbb{Q}}
\newcommand{\PP}{\mathds{P}}
\newcommand{\NN}{\mathbb{N}}
\newcommand{\FF}{\mathds{F}}
\newcommand{\EE}{\mathds{E}}
\newcommand{\cF}{\mathcal{F}}
\newcommand{\cA}{\mathcal{A}}
\newcommand{\cE}{\mathcal{E}}
\newcommand{\var}{\mathds{V}\mathrm{ar}\,}
\newcommand{\bone}{\mathds{1}}
\newcommand{\GL}{\operatorname{GL}(\RR,d)}
\newcommand{\SL}{\operatorname{SL}(\RR,d)}
\newcommand{\SO}{\operatorname{SO}(\RR,d)}
\newcommand{\Od}{\operatorname{O}(\RR,d)}
\newcommand{\vect}{\operatorname{vec}}
\newcommand{\adj}{\operatorname{adj}}
\newcommand{\trace}{\operatorname{tr}}
\newcommand\normal{\color{black}}
\begin{document}
	\title{
		Limit theorems for  stochastic exponentials of matrix-valued L\'evy processes \\
	}
	\author{Anita Behme\thanks{Technische Universit\"at
			Dresden, Institut f\"ur Mathematische Stochastik, Helmholtzstra{\ss}e 10, 01069 Dresden, Germany, \texttt{anita.behme@tu-dresden.de}, phone: +49-351-463-32425, fax:  +49-351-463-37251.}\; and Sebastian Mentemeier\thanks{Universität Hildesheim, Institut für Mathematik und Angewandte Informatik, Samelsonplatz 1, 31141 Hildesheim, Germany, \texttt{mentemeier@uni-hildesheim.de}}}
	\date{\today}
	\maketitle

	\begin{abstract} We study the long-time behaviour of matrix-valued stochastic exponentials of Lévy processes, i.e. of multiplicative Lévy processes in the general linear group. In particular, we prove laws of large numbers as well as central limit theorems for the logarithmised norm, logarithmised entries and the logarithmised determinant of the stochastic exponential.
		Where possible, also Berry-Esseen bounds are stated. 
	\end{abstract}

2020 \textsl{Mathematics subject classification.} 60G51, 60J57  (primary), 
60H10, 60B15 (secondary)\\

\textsl{Keywords:} central limit theorem; law of large numbers; multivariate L\'evy process; multiplicative Lévy process; products of random matrices; stochastic exponential; L\'evy processes on groups

\section{Introduction}

Let $L=(L_t)_{t\geq 0}$ be a Lévy process in $\RR^{d\times d}$. We are interested in the long time behaviour of the \emph{stochastic exponential} of $L$, that is of the unique, càdlàg solution $X=(X_t)_{t\geq 0}$ in $\RR^{d\times d}$ of the stochastic differential equation (SDE)
\begin{equation}\label{SDE}
	dX_t=X_{t-} dL_t,\; t\geq 0, \quad X_0=\Id,
\end{equation}
with $\Id$ the identity matrix.\\
Dating back to results of Skorokhod, cf. \cite{skorokhod82} (see also \cite[Prop. 2.4]{Behme2012}), it is known that under the non-singularity assumption 
\begin{equation}\label{eq-nonsingular}
	\det(\Id + \Delta L_t) \neq 0 \quad \text{for all } t\geq 0, 
\end{equation}
the stochastic exponential of $L$ is a \emph{left Lévy process} in the general linear group $\GL$, i.e. a multiplicative stochastic process taking values in $\GL$, starting in $\Id$, and with stationary and independent increments $X_{s,t}:= X_s^{-1} X_t$, $s\leq t$, acting from the right. Hereby $\Delta L_t=L_t- L_{t-}$ denotes the jump of $L$ at time $t\geq 0$. Conversely, for any left Lévy process $X$ in $\GL$ there exists an additive Lévy process $L$ in $\RR^{d\times d}$ such that $X$ is the stochastic exponential of $L$. In this case $L$ is given by the unique solution of the SDE
\begin{equation} \label{SDElog}
	dL_t = X_{t-}^{-1} dX_t,\; t\geq 0, \quad L_0=0,
\end{equation}
with $0\in \RR^{d\times d}$ denoting the zero matrix, and we call $L$ the \emph{stochastic logarithm} of $X$. 

While in the one-dimensional case a closed-form expression for the stochastic exponential is well-known, cf. \cite[Thm.~II.37]{protter}, for $d>1$ analogue statements are only available under certain commutativity assumptions, see \cite{Yan2012, Kern2022}. This makes the study of stochastic exponentials for general Lévy processes in dimensions $d>1$ much more involved and motivates our investigation of limit theorems.\\

In this work, we prove strong laws of large numbers (SLLN) and central limit theorems (CLT), where possible with Berry-Esseen bounds, for the most relevant processes derived from $X$, namely its norm, its determinant and the growth rate of a fixed entry.  Considering the action of $X$ on the projective space, we further obtain uniqueness of an invariant measure together with convergence rates for the corresponding Markov chain given by the projection of $(yX_t)_{t \ge 0}$ on the projective space. Throughout, the needed assumptions will be formulated in terms of the driving Lévy process $L$.

Our methods will connect the field of multivariate (continuous-time) L\'evy processes with the field of (discrete-time) products of random matrices. Concerning convergence results for the norm, the entries and the associated Markov chain, we will draw on recent results from the theory of products of random matrices which we can apply to discrete-time skeletons of $X$. We will require weak assumptions: irreducibility of the action of $X$ on $\R^d$, and an assumption that basically only excludes cases where $X$ is contained in the set of similarity matrices, i.e. products of dilations and isometries. Considering the determinant process, the analysis will be based on the study of multiplicative L\'evy processes and multivariate Itô formula.

\subsection*{Related works}

Multiplicative Lévy processes in $\GL$, and - even much more general - in Lie groups, are treated in depth in the textbook \cite{Liao2004}. This source also states CLTs for these processes (under stronger assumptions than imposed here), but does not consider the stochastic logarithm $L$ of a multiplicative Lévy process $X$. The SDE \eqref{SDE} of the stochastic exponential for processes with values in arbitrary Lie groups has also been studied by Estrade~\cite{Estrade}. \\
Concerning the special case of stochastic exponential functionals in $\GL$ that we consider in this paper, results on the short-time behaviour can be found in \cite{Reker23}, where in particular the equivalence
$$\lim_{t\downarrow 0} \frac{L_t}{t^p} = \nu \; \text{a.s.} \quad \Leftrightarrow \quad \lim_{t\downarrow 0} \frac{X_t-\Id}{t^p} = \nu,$$
for $\nu \in\RR^{d\times d}$, is proven \cite[Example 1]{Reker23}. As also mentioned in that source, further results concerning short- and long-time behaviour could be derived using the generalizations of Blumenthal-Getoor indices for Feller processes, see \cite{Schilling98, schnurrdiss, Schnurrhdwj}. This approach yields law-of-iterated-logarithm-type results applied on the norm of the process, i.e. it allows to obtain information on $\lim_{t\to \infty} \sup_{s\leq t} f(t)^{-1} \norm{X_s-\Id}$ for suitable growth rate functions $f$. In this article we will not follow this approach via the supremum process, but instead derive SLLNs and CLTs  for the norm process and other derivates of the stochastic exponential without considering its supremum. In parts, namely when considering the determinant process, this can be obtained by applying corresponding results for Lévy processes as they can be found in \cite[Chapter 48]{sato2nd} and \cite{DoneyMaller02}. Concerning other derivatives of the stochastic exponential, we rely on the theory of random matrices. For products of random matrices, the strong law of large numbers for the norm was shown first in \cite{Furstenberg1960}, where also a central limit theorem for matrices with strictly positive entries was provided. For general invertible matrices, a central limit for the norm was proved first in \cite{LePage1982}, building on earlier work by \cite{Kaijser1978}, see \cite{Bougerol1985} for a textbook account of these works. Recent progress was in particular concerned with eliminating the assumption of an exponential moment and proving the optimal second moment condition for the validity of the CLT. This was accomplished in \cite{Benoist2016}, see also the textbook \cite{Benoist2016a}. In the last few years, many more classical limit theorems, like the local limit theorem, large deviation principles and Edgeworth expansions, have been established under (almost) optimal moment conditions, see e.g. \cite{Xiao2023,Xiao2023a,Grama2022,Cuny2023}.

\subsection*{Main results and structure of the paper}

In Section \ref{sect:Preliminaries} we introduce basic notation and provide details about matrix-valued stochastic exponentials and left L\'evy processes in $\GL$. In Section \ref{sect:limitNorms} we discuss the geometric assumptions and the moment assumption required for the validity of limit theorems for norms, entries and the action on a fixed vector. We show how these assumptions can be checked in terms of the characteristics of $L$.
Theorem \ref{CLT} provides the SLLN and the CLT for both $\log \norm{X_t}$ and $\log \norm{yX_t}$, for any $y \in \R^d$, $\norm{y}=1$; under optimal moment assumptions. A CLT for $\log \abs{X^{(i,j)}}$ is given in Theorem~\ref{CLTcoefficients}. Theorem~\ref{BerryEsseen} contains Berry-Esseen bounds for the CLT for $\log \norm{yX_t}$ and, moreover, for the joint convergence of both the directional and radial part of $yX_t$.	
In Section \ref{sect:determinant}, we prove an explicit representation of the determinant process $\det(X_t)$ as the (one-dimensional) stochastic exponential of a L\'evy process $\check{L}_t$, the characteristics of which are given in Theorem~\ref{thm-detprocess}. From this, we derive a SLLN for the logarithm of the determinant process in Theorem~\ref{thm-SLLN-det} as well as a CLT in Theorem~\ref{thm-CLT-det}.

\section{Preliminaries}\label{sect:Preliminaries}

For easier reference  all notation that will be used in the paper is collected in this section.

\subsection{Notation}

We reserve capital letters for random variables (and the identity matrix $\Id$). Deterministic matrices are usually denoted by letters $a$ or $g$. For a matrix $a\in \RR^{m\times n}$ we write
$a^T$ for its transpose and let $a^{(i,j)}$ denote the component
in the $i$-th row and $j$-th column of $a$. The symbol $\otimes$ denotes the Kronecker product. We fix a vector norm $\| \cdot \|$ on $\R^d$ and denote the corresponding operator norm on matrices by $\| \cdot \|$ as well.
The general linear group of degree $d$ over $\RR$ is denoted as $\GL$, while its subgroups, the special linear group, the orthogonal group and the special orthogonal group are denoted as $\SL$, $\Od$ and $\SO$, respectively.
We abbreviate $M(a)= \max\{ \norm{a}, \norm{a^{-1}}\}$ for $a \in \GL$. For $\epsilon>0$, $B_\epsilon(a):= \{g \, : \, \norm{g-a}< \epsilon\}.$

Denote by $P(\RR^d)$ the projective space of dimension $d-1$. For $y \in \R^d \setminus\{0\}$, we denote by $\hat{y} = \R y$ the corresponding direction in $P(\R^d)$.
For a matrix $g$ and $\hat{y}\in P(\RR^{d})$, denote the right action of $g$ on  $\hat y$  by 
$$  \hat y   \cdot g  := \RR (yg) \in P(\RR^d).$$

Let $\mathcal{C}(P(\R^d))$ be the space of continuous complex-valued functions on the projective space $P(\R^d)$. When studying convergence rates, we will have to consider a certain subset of test functions, namely $\gamma$-Hölder continuous functions, for some  $\gamma>0$. We equip the projective space $P(\R^d)$ with the angular distance $\mathbf{d}$, i.e. for any $\hat{y}=\R y$, $\hat{z}=\R z \in P(\R^d)$ we set 
$\mathbf{d}(\hat{y},\hat{z}):=\abs{\sin(\theta(y,z))}$, where $\theta(y,z)$ is the angle between $y$ and $z$. For any $\varphi \in \mathcal{C}(P(\R^d))$ we set
\begin{equation}
	\norm{\varphi}_\gamma := \norm{\varphi}_\infty + [\varphi]_\gamma, \quad \norm{\varphi}_\infty:=\sup_{x \in P(\R^{d-1})} \abs{\varphi(x)}, \quad [\varphi]_\gamma := \sup_{x,y \in P(\R^d)} \frac{\abs{\varphi(x)-\varphi(y)}}{\mathbf{d}^\gamma(x,y)},
\end{equation}
and define  the space  of $\gamma$-Hölder continuous functions by $\mathcal{B}_\gamma:=\big\{ \varphi \in \mathcal{C}(P(\R^d)) \, : \, \norm{\varphi}_\gamma < \infty \big\}$.

Limits in distribution
will be denoted by ``$\operatorname{d-}\lim$'' or ``$\overset{d}\to$'',
limits in probability by ``$\PP\text{-}\lim$'' or
``$\overset{\PP}\to$'', and ``almost surely'' will be abbreviated by
``a.s.''.  The cdf of the standard normal distribution $\mathcal{N}(0,1)$ will be written as $\Phi(x), x\in\RR$. The law of a random matrix $Y$ will be denoted by
$\mathcal{L}(Y)$. 
Jumps of a matrix valued c\`adl\`ag process $X=(X_t)_{t\geq
	0}$ will be denoted by $\Delta X_t := X_t - X_{t-}$ with $X_{t-} :=
\lim_{s\uparrow t} X_s$ for $t>0$ and the convention $X_{0-} := 0$.

 Given a filtration $\FF = (\cF_t)_{t\geq 0}$ satisfying the usual
hypotheses of right-continuity and completeness (see e.g.~\cite[p.~3]{protter}), a matrix-valued stochastic process $M=(M_t)_{t\geq 0}$ is called an $\FF$-semimartingale or simply a semimartingale if every component $(M_t^{(i,j)})_{t\geq 0}$ is a semimartingale with respect to the filtration $\FF$. For a semimartingale $M$ in $\RR^{m\times n}$ and a locally bounded
predictable process $H$ in $\RR^{\ell\times m}$ the $\RR^{\ell\times
	n}$-valued (left) stochastic integral $I=\int HdM$ is given by
$I^{(i,j)}=\sum_{k=1}^m \int H^{(i,k)}dM^{(k,j)}$ and in the same
way for $M \in \RR^{\ell\times m}$, $H\in \RR^{m\times n}$, the $\RR^{\ell
	\times n}$-valued stochastic (right) integral $J=\int dM H$ is given
by $J^{(i,j)}=\sum_{k=1}^m \int H^{(k,j)}dM^{(i,k)}$.

\subsection{Additive and multiplicative L\'evy processes}

Recall that an (additive) L\'evy process $L= (L_t)_{t\geq 0}$ with
values in $\RR^{d\times d}$ is a process with stationary and
independent (additive) increments which has almost surely c\`adl\`ag
paths and starts at $0$. Here, an increment of $L$ is given
by $L_t - L_s$ for $s\leq t$. We will say that an $\RR^{d\times d}$-valued Lévy process $L$ has the \emph{characteristic triplet} $(\Sigma_L, \gamma_L, \nu_L)$ if the $\RR^{d^2}$-valued Lévy process $\vec{L}:=\vect(L)$ has the characteristic triplet $(\Sigma_L, \vect(\gamma_L), \vect(\nu_L))$, where $\Sigma_L$ is a symmetric, $\RR^{d^2 \times d^2}$-valued non-negative definite matrix, $\gamma_L\in \RR^{d^2}$ is the location parameter, $\nu_L$ denotes the Lévy measure of $L$, and $\vect(\nu_L)(B) := \nu_L(\vect^{-1}(B))$ for any Borel set $B\subset\RR^{d^2}$. For easier reference we also write $\sigma_{(m,j),(n,\ell)}$ for the covariance between the Gaussian components of $L^{(m,j)}$ and $L^{(n,\ell)}$, i.e. the $(d(j-1)+m, d(\ell-1)+n)$-entry of $\Sigma_L$. We also use the notation $\gamma_L^0$ for the drift of $L$ whenever it exists and recall that $\gamma_L^0=\gamma_L- \int_{\|x\| \leq 1} x \nu_L(dx)$, cf. \cite[Rem. 8.4]{sato2nd}.

We refer to Sato~\cite{sato2nd} as standard reference for further
information regarding L\'evy processes. \\

Following \cite{Liao2004}, a c\`adl\`ag process $X=(X_t)_{t\geq 0}$ in
$\GL$, with $X_0=I$ a.s. is called a \emph{(multiplicative) left L\'evy process}, if it has independent and stationary \emph{right increments}. Hereby, (multiplicative) right increments are of the form
$$X_{s,t}=X_s^{-1}X_t, \quad \text{for }0\leq s\leq t<\infty,$$
and hence the process $X$ in $\GL$
has \emph{independent right increments} if for any $n\in \NN$,
$0<t_1<\ldots<t_n$, the random variables $X_0,
X_{0, t_1},\ldots, X_{t_{n-1}, t_n}$ are independent, and it has \emph{stationary right increments} if
$X_{s,t} \overset{d}= X_{0,t-s}$ holds for all $s<t$.\\

As already described in the Introduction, under the non-singularity assumption \eqref{eq-nonsingular}
the stochastic exponential $X$ of an additive Lévy process $L$ in $\RR^{d\times d}$ is a left Lévy process in $\GL$. Conversely, any left Lévy process $X$ in $\GL$ has a stochastic logarithm $L$ which is an (additive)  Lévy process in  $\RR^{d\times d}$ and defined as the unique solution of \eqref{SDElog}.
For such a pair $L$ and $X$ it has been shown in \cite[Prop. 3.1]{Behme2012b} that for any $\kappa>0$ 
\begin{equation}\label{eq-momentcondition}
	\EE\Big[\|L_1\|^\kappa\Big]<\infty \quad \text{implies} \quad \EE\Big[\sup_{0\leq s \leq t} \| X_s\| ^\kappa \Big]<\infty \quad \text{for all }t\geq 0.
\end{equation}
In particular 
\begin{equation*}
	\EE[X_t]=\exp\big(t \EE[L_1]\big) \quad \text{for all }t\geq 0.
\end{equation*}

\begin{remark}\label{rem-leftright} 
Clearly, by symmetry, one can also define \emph{right Lévy processes} in $\GL$ and it can be easily veryfied that the inverse and the transpose of a left L\'evy process in $\GL$ are right L\'evy processes and vice versa. \\
Moreover, for any left Lévy process $X$ solving \eqref{SDE}, under the non-singularity assumption \eqref{eq-nonsingular}, the right Lévy process $(X_t^{-1})_{t\geq 0}$ is the solution of the SDE
\begin{equation} \label{SDEright} dX_t^{-1} =dU_t \,X_{t-}^{-1},\;  t\geq 0, \quad X_0^{-1}=\Id,\end{equation} 
for an (additive) Lévy process $U=(U_t)_{t\geq 0}$ in $\RR^{d\times d}$, which fulfils
\begin{equation}\label{eq-LtoU}
	U_t=-L_t-[L,U]_t,\quad t\geq 0,
\end{equation}
see \cite{karandikar91b, Behme2012} for details and proofs.\\
These relations allow for an easy translation of the results on solutions of \eqref{SDE} to results on solutions of the associated SDE \eqref{SDEright}. 
\end{remark}

For a better understanding of the relation between jumps of the driving Lévy process and jumps of the stochastic exponential,  we provide the following result. The formula is also indicated in \cite[Eq. (6)]{skorokhod82}, where however it is stated wrongly and without proof. We therefore also provide a sketch of the proof.
\begin{lemma} \label{lemma-formulaskorokhod}
	Let $L$ be a L\'evy process in $\RR^{d\times d}$ fulfilling \eqref{eq-nonsingular} and let $X$ be the corresponding stochastic exponential. Fix $\varepsilon>0$ and define the Lévy process $L^\varepsilon$ by 
	$$L_t^\varepsilon:= L_t- \sum_{\substack{0<s\leq t\\ \|\Delta L_s\|\geq \varepsilon}} \Delta L_s, \quad t\geq 0,$$
	and denote the stochastic exponential of $L^\varepsilon$ by $X^\varepsilon$. Then, with $N_t(\varepsilon)$ and $0<\tau_1<\ldots<\tau_{N_t(\varepsilon)} \leq t$ denoting the number and times of the jumps of $L_t-L_t^\varepsilon$, respectively, 
	\begin{align*} X_t
		&=  X_t^{\varepsilon} + \sum_{\ell=1}^{N_t(\varepsilon)} (-1)^{\ell+1} \hspace{-5mm} \sum_{1\leq k_1< \ldots<k_\ell\leq N_t(\varepsilon)} \hspace{-5mm} X_{\tau_{k_1}-}  \Delta L_{\tau_{k_1}}  X_{\tau_{k_1}, \tau_{k_2}-} \cdots \Delta L_{\tau_{k_\ell}} X_{\tau_{k_\ell}, t}
	\end{align*} 
\end{lemma}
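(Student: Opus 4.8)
\noindent\textit{Proof strategy.}
Set $N:=N_t(\varepsilon)$, $\tau_0:=0$, and let $J_t=L_t-L^\varepsilon_t=\sum_{0<s\le t,\,\|\Delta L_s\|\ge\varepsilon}\Delta L_s$ be the big‑jump part, a finite‑activity process whose jump times on $[0,t]$ are $\tau_1<\dots<\tau_N$; since these times are independent of $L^\varepsilon$, a.s.\ $L^\varepsilon$, and hence $X^\varepsilon$, is continuous at each $\tau_i$. The proof rests on one structural remark: on each interval $[\tau_i,\tau_{i+1})$ the drivers $L$ and $L^\varepsilon$ differ only by an additive constant, so the right‑increment processes $s\mapsto X_{\tau_i,s}$ and $s\mapsto X^\varepsilon_{\tau_i,s}$ both solve $dY_s=Y_{s-}\,dL^\varepsilon_s$ with $Y_{\tau_i}=\Id$ and therefore coincide on $[\tau_i,\tau_{i+1})$ by uniqueness for \eqref{SDE} (cf.\ \cite{protter, Behme2012}); at $\tau_i$, on the contrary, $X^\varepsilon$ is continuous whereas $X_{\tau_i}=X_{\tau_i-}(\Id+\Delta L_{\tau_i})$.

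\smallskip\noindent\textit{Step 1 (a first‑order perturbation identity).}
Combining, for $s\in[\tau_i,\tau_{i+1})$, the decompositions $X_s-X^\varepsilon_s=(X_{\tau_i}-X^\varepsilon_{\tau_i})\,X^\varepsilon_{\tau_i,s}$ and $X_{\tau_i}-X^\varepsilon_{\tau_i}=(X_{\tau_i-}-X^\varepsilon_{\tau_i-})+X_{\tau_i-}\Delta L_{\tau_i}$ with the increment identity $X^\varepsilon_{\tau_j,\tau_i-}X^\varepsilon_{\tau_i,s}=X^\varepsilon_{\tau_j,s}$ and the continuity of $X^\varepsilon$ at the $\tau_j$, a short induction over $i=0,1,\dots,N$ (base case $X_s=X^\varepsilon_s$ on $[0,\tau_1)$) gives $X_s-X^\varepsilon_s=\sum_{j=1}^{i}X_{\tau_j-}\Delta L_{\tau_j}X^\varepsilon_{\tau_j,s}$ for $s\in[\tau_i,\tau_{i+1})$; taking $i=N$ and $s=t$ yields
\begin{equation}\label{eq-star-sketch}
	X_t \;=\; X^\varepsilon_t \;+\; \sum_{i=1}^{N} X_{\tau_i-}\,\Delta L_{\tau_i}\,X^\varepsilon_{\tau_i,t}.
\end{equation}
(Equivalently, \eqref{eq-star-sketch} is the solution, obtained interval by interval, of the linear SDE $dD_s=D_{s-}\,dL^\varepsilon_s+X_{s-}\,dJ_s$, $D_0=0$, satisfied by $D:=X-X^\varepsilon$, whose inhomogeneity is a finite pure‑jump term.)

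\smallskip\noindent\textit{Step 2 (unrolling to increments of $X$).}
The claimed formula differs from \eqref{eq-star-sketch} only in that the factors $X^\varepsilon_{\tau_i,t}$ are replaced by increments of $X$. To convert them, apply \eqref{eq-star-sketch} to the right‑increment process $s\mapsto X_{\tau_i,s}$ on $[\tau_i,t]$ --- which again solves an SDE of the form \eqref{SDE}, now with big jumps $\tau_{i+1},\dots,\tau_N$ and with $L^\varepsilon$‑analogue $s\mapsto X^\varepsilon_{\tau_i,s}$ --- to obtain, for every $i$,
\begin{equation}\label{eq-star2-sketch}
	X^\varepsilon_{\tau_i,t}\;=\;X_{\tau_i,t}\;-\;\sum_{j=i+1}^{N} X_{\tau_i,\tau_j-}\,\Delta L_{\tau_j}\,X^\varepsilon_{\tau_j,t}.
\end{equation}
This is an upper‑triangular linear recursion for the matrices $A_i:=X^\varepsilon_{\tau_i,t}$, with $A_N=X_{\tau_N,t}$; solving it from the top downwards expresses $A_i$ as $X_{\tau_i,t}$ plus a sum over increasing chains $i<k_1<\dots<k_m\le N$ of the terms $(-1)^m X_{\tau_i,\tau_{k_1}-}\Delta L_{\tau_{k_1}}X_{\tau_{k_1},\tau_{k_2}-}\cdots\Delta L_{\tau_{k_m}}X_{\tau_{k_m},t}$. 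Substituting this into \eqref{eq-star-sketch}, putting $k_0:=i$, and relabelling $\ell:=m+1$ and $(k_0,\dots,k_m)=(k_1,\dots,k_\ell)$ turns the double sum into $\sum_{\ell\ge1}(-1)^{\ell+1}\sum_{1\le k_1<\dots<k_\ell\le N}X_{\tau_{k_1}-}\Delta L_{\tau_{k_1}}X_{\tau_{k_1},\tau_{k_2}-}\cdots\Delta L_{\tau_{k_\ell}}X_{\tau_{k_\ell},t}$, which is exactly the asserted identity.

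\smallskip
The two non‑routine points are the uniqueness argument behind \eqref{eq-star-sketch} --- namely that $X$ and $X^\varepsilon$ are governed by the same equation between big jumps --- and the combinatorics of chains and signs in Step 2, which I expect to be the main source of friction; it is cleanest to carry out via the downward induction on $i$ just indicated, and in dimension $d=1$ the resulting expansion can be checked to collapse to the classical product formula for stochastic exponentials (\cite[Thm.~II.37]{protter}).
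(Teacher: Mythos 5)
Your proof is correct, and it reaches the identity by a genuinely different (and more explicit) route than the paper. The paper's argument writes both $X_t^{\varepsilon}$ and $X_t$ as telescoping products over the partition of $[0,t]$ by the big-jump times --- $X_t^{\varepsilon}=X_{\tau_1-}X_{\tau_1,\tau_2-}\cdots X_{\tau_{N},t}$ versus $X_t=X_{\tau_1-}(\Id+\Delta L_{\tau_1})X_{\tau_1,\tau_2-}(\Id+\Delta L_{\tau_2})\cdots X_{\tau_{N},t}$ --- and then declares the alternating-sign expansion to be verifiable ``by induction on $N_t(\varepsilon)$ and direct computation''; the delicate point it glosses over is that a naive expansion of the second product produces segments that \emph{skip} the unselected jumps, so an inclusion--exclusion step is needed to re-express everything through genuine increments of $X$. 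Your two-stage argument supplies exactly that step: the Duhamel-type identity \eqref{eq-star-sketch} (which rests on the same structural facts as the paper's products, namely that $X$ and $X^{\varepsilon}$ share right-increments between big jumps, that $X^{\varepsilon}$ is a.s.\ continuous at the $\tau_i$, and that $X$ gains the factor $\Id+\Delta L_{\tau_i}$ there), followed by the inversion of the upper-triangular recursion \eqref{eq-star2-sketch}, which is where the signs $(-1)^{\ell+1}$ and the chains $k_1<\dots<k_\ell$ actually come from. Applying Step 1 to the process restarted at $\tau_i$ is legitimate since your derivation is pathwise on each interval, though it would be worth saying so explicitly; the only blemish is the relabelling $(k_0,\dots,k_m)=(k_1,\dots,k_\ell)$, where the reused symbols $k_1,\dots,k_m$ on both sides should be distinguished notationally. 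What your approach buys is a complete proof of the combinatorial identity that the paper only asserts; what the paper's formulation buys is a one-line conceptual picture of where the formula comes from.
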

\begin{proof}	
	First of all note that $N_t(\varepsilon)<\infty$ a.s. as $L$ has only finitely many big jumps. Further, by definition
	\begin{align*}
		X_t^\varepsilon &
		= X_{\tau_1-} X_{\tau_1, \tau_2-} \cdots  X_{\tau_{N_t}(\varepsilon), t}, \\
		\text{while} \quad X_t&
		=  X_{\tau_1-} X_{\tau_{1}-,\tau_{1}} X_{\tau_1, \tau_2-} X_{\tau_{2}-,\tau_{2}} \cdots  X_{\tau_{N_t}(\varepsilon), t}\\
		&= X_{\tau_1-} (\Id + \Delta L_{\tau_1}) X_{\tau_1, \tau_2-}  (\Id + \Delta L_{\tau_2}) \cdots  X_{\tau_{N_t}(\varepsilon), t}.
		\end{align*} 
	With this the given formula can be checked by induction on $N_t(\varepsilon)$  and direct computation.
\end{proof}

We also mention the following result due to \'Emery~\cite{emery}.
\begin{lemma} \label{lemma-emeryapprox} Let $\sigma=(t_0=0,t_1,\ldots, t_j, \ldots)$ with $t_j\to
	\infty$ and $|\sigma|:=\sup_{j\in \NN} |t_j-t_{j-1}|<\infty$ be a
	subdivision of the positive real line. Let $L$ be a L\'evy process
	in $\RR^{d\times d}$ with \eqref{eq-nonsingular}. Then the processes $X^\sigma$ given by
	$X^\sigma_0 := \Id$ and
	\begin{equation} \label{MGOUemeryapprox}
		X^\sigma_t := (\Id+L_{t_1})(\Id+L_{t_2}-L_{t_1})\cdots
		(\Id+L_{t_j}-L_{t_{j-1}})(\Id+L_t-L_{t_j})
	\end{equation}
	for $t_j<t\leq t_{j+1}$ converge to the stochastic exponential $X$ of $L$ uniformly on compacts
	in probability when $|\sigma|$ tends to $0$. 
\end{lemma}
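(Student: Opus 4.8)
The plan is to prove Lemma~\ref{lemma-emeryapprox} by reducing it to Émery's general stability theorem for stochastic differential equations driven by semimartingales, and then checking that the scheme \eqref{MGOUemeryapprox} is exactly the Euler-type approximation to which that theorem applies. First I would recall that, since $L$ is a Lévy process in $\RR^{d\times d}$, its vectorisation $\vec{L}$ is a semimartingale, and the map $x \mapsto x$ (i.e. the coefficient $H \mapsto H$ in the SDE $dX_t = X_{t-}\, dL_t$) is Lipschitz with a linear growth bound; hence by \cite[Thm.~V.7]{protter} the SDE \eqref{SDE} has a unique càdlàg solution $X$, and this is the stochastic exponential. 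The non-singularity assumption \eqref{eq-nonsingular} guarantees $X$ takes values in $\GL$, but this is not actually needed for the convergence statement itself, only to identify $X$ as a left Lévy process — so I would mention it but not lean on it.

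Next I would make the connection to the discrete scheme explicit. For a fixed subdivision $\sigma=(t_0=0,t_1,\dots)$ with $|\sigma|<\infty$, consider the process $X^\sigma$ defined in \eqref{MGOUemeryapprox}. The key observation is that $X^\sigma$ solves an SDE driven by $L$ in which the integrand is frozen at the left endpoint of each subinterval: precisely, $X^\sigma_t = \Id + \int_{(0,t]} X^\sigma_{\varphi_\sigma(s-)}\, dL_s$ where $\varphi_\sigma(s) = t_j$ for $s \in [t_j, t_{j+1})$ is the ``last grid point'' map. Indeed, on $(t_j, t_{j+1}]$ the increment $X^\sigma_t - X^\sigma_{t_j} = X^\sigma_{t_j}(L_t - L_{t_j})$, which is exactly $\int_{(t_j,t]} X^\sigma_{t_j}\, dL_s$, and summing over the subintervals telescopes to the claimed integral equation. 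This exhibits $X^\sigma$ as the output of the Euler scheme for \eqref{SDE} along the mesh $\sigma$.

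Then I would invoke the approximation result of Émery~\cite{emery} (see also \cite[Thm.~V.16 and its corollaries]{protter}), which states precisely that for an SDE $dX_t = f(X_{t-})\,dL_t$ with Lipschitz coefficient $f$ and $L$ a semimartingale, the Euler approximations along meshes $\sigma$ converge to the true solution uniformly on compacts in probability as $|\sigma|\to 0$. Applying this with $f(x)=x$ (acting by left multiplication on $dL$) and $L = \vec L$ gives $X^\sigma \to X$ ucp, which is the assertion. To be careful one should check that the matrix (right) stochastic integral convention used here matches the vectorised scalar one — this is a routine bookkeeping step using $\vec{(ABC)} = (C^T \otimes A)\vec B$ — and that the linear coefficient indeed satisfies the hypotheses of Émery's theorem on all of $\RR^{d\times d}$ (it does, being linear hence globally Lipschitz).

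The main obstacle, such as it is, is not analytical depth but precision in two places: (i) verifying the telescoping identity that turns \eqref{MGOUemeryapprox} into the frozen-integrand integral equation, including the correct treatment of the final partial interval $(t_j,t]$ and of jumps straddling grid points; and (ii) matching the right-multiplicative matrix SDE in \eqref{SDE} with the scalar semimartingale SDE framework of \cite{protter} via vectorisation, so that Émery's theorem genuinely applies. Both are standard but must be done carefully; everything else is a direct citation. I would therefore keep the written proof short: state the frozen-integrand reformulation, note it is the Euler scheme, and cite \cite{emery} (equivalently \cite[Thm.~V.16]{protter}) for the ucp convergence.
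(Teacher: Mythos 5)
The paper offers no proof of this lemma at all---it simply attributes the statement to \'Emery~\cite{emery}---and your argument is exactly the standard justification for that citation: the product \eqref{MGOUemeryapprox} is the Euler scheme for \eqref{SDE} with the integrand frozen at the last grid point, and \'Emery's stability theorem for Lipschitz (here linear) coefficients gives ucp convergence as the mesh tends to zero. Your reduction, including the telescoping identity and the vectorisation bookkeeping, is correct and consistent with how the paper itself handles the linear coefficient in Lemma~\ref{lem-vecX}.
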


In order to make results for Feller processes accessible in the study of matrix-valued stochastic exponentials, we also provide the generator of the stochastic exponential in the next proposition. Its proof is deferred to Appendix \ref{appendix:stochastic.exponential.as.Feller.process}.

	\begin{proposition}\label{prop:generatorXt}
		Let $L=(L_t)_{t\geq 0}$ be a Lévy process in $\RR^{d\times d}$ with characteristics $(\Sigma_L,\gamma_L,  \nu_L)$ fulfilling \eqref{eq-nonsingular}, and let $X=(X_t)_{t\geq 0}$ be its stochastic exponential. Then $X$ is a rich Feller process in $\RR^{d\times d}$ and  its extended generator is given by
		\begin{align*}\cA_{X} f(x) &= \sum_{i,j=1}^d \ell(x)^{(i,j)} \frac{\partial}{\partial x^{(i,j)}} f(x) + \frac12 \sum_{i,j, k,l=1}^{d} Q(x)^{(i,j,k,l)} \frac{\partial}{\partial x^{(i,j)}}\frac{\partial}{\partial x^{(k,l)}} f(x) \\
			&\quad  + \int_{y\in \RR^{d\times d}\setminus \{0\}} \Big( f(x+y) - f(x) -  \sum_{i,j=1}^d y^{(i,j)} \frac{\partial}{\partial x^{(i,j)}} f(x)\mathds{1}_{\{\norm{y}\leq 1\}}  \Big) N(x, dy), \end{align*}
		for all $f\in C_c^2(\RR^{d\times d})\subseteq D(\mathcal{A}_{X})$, the latter denoting the domain of $\cA_X$. For any $x\in \RR^{d\times d}$, the triplet $(\ell(x), Q(x), N(x,dy))$ is given by
		\begin{align*}
			\ell(x)&=  x \gamma_L +  \int_{y\in \RR^{d\times d}\setminus\{0\}}  x y  \left(  \mathds{1}_{\{ \norm{ x y } \leq 1\}}- \mathds{1}_{\{ \norm{y} \leq 1\}} \right) \nu_L(dy),  \\
			Q(x)^{(i,j,k,l)} &= \sum_{n,m=1}^{d} 	x^{(i,m)} \sigma_{(m,j),(n,l)} x^{(k,n)} , \, i,j,k,l=1,\ldots, d,\quad \text{and} \\
			N(x,dy)&= \nu_L(T_x^{-1}(dy)),
		\end{align*}
		being the image measure of $\nu_L$ under the transform $T_x$ defined by
		$$T_x(y): \RR^{d\times d} \to \RR^{d\times d} , y\mapsto x y, \quad x\in\RR^{d\times d}.$$
\end{proposition}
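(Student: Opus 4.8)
My plan is to derive the generator of $X$ from its representation as a stochastic exponential, identifying it with a Feller process by recognising that $X_t = X_{t-} + X_{t-}\,\Delta L_t$ plus a drift-and-diffusion part makes $X$ a solution of an SDE driven by a Lévy process with state-dependent coefficients $x \mapsto x\,(\cdot)$. Concretely, writing $\vec{X} = \vect(X)$ and $\vec{L} = \vect(L)$, the SDE \eqref{SDE} becomes $d\vec{X}_t = (X_{t-}^T \otimes \Id)\, d\vec{L}_t$ (using $\vect(AB) = (B^T \otimes \Id)\vect(A)$ applied to $X_{t-}\, dL_t$, or the analogous Kronecker identity in the convention of the paper). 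This exhibits $\vec{X}$ as the solution of a standard SDE $d\vec{X}_t = \Phi(\vec{X}_{t-})\, d\vec{L}_t$ with a smooth, locally Lipschitz coefficient $\Phi(x) = x^T \otimes \Id$ (linear in $x$), so I can invoke the general theory of SDEs driven by Lévy processes and their associated generators — for instance the Itô formula applied to $f(X_t)$ for $f \in C_c^2$ — to read off the characteristics.

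The key steps, in order: (i) Apply the (matrix-valued) Itô formula to $f(X_t)$ for $f \in C_c^2(\RR^{d\times d})$, using $dX_t = X_{t-}\, dL_t$ and the decomposition of $L$ into its drift, Gaussian, and jump parts via the Lévy–Itô decomposition. (ii) Collect the finite-variation drift term: this produces $\sum_{i,j} (x\gamma_L)^{(i,j)} \partial_{ij} f(x)$ together with the compensator correction, because the small-jump truncation for $L$ at $\norm{y}\le 1$ transforms under $y \mapsto xy$ into a truncation at $\norm{xy} \le 1$; the difference of the two indicators is exactly the extra integral term in the formula for $\ell(x)$, which is finite because near $y=0$ the two indicators agree. (iii) Collect the second-order term: the quadratic covariation $d[L^{(m,j)}, L^{(n,l)}]_t^c = \sigma_{(m,j),(n,l)}\, dt$ gets multiplied by $x^{(i,m)} x^{(k,n)}$ coming from the two factors of $X_{t-}$, giving $Q(x)^{(i,j,k,l)}$. (iv) Collect the jump term: a jump $\Delta L_t = y$ produces a jump $\Delta X_t = X_{t-}\, y$, i.e. $x \mapsto x + xy = x + T_x(y)$, so the jump measure of $X$ seen from state $x$ is the pushforward $\nu_L \circ T_x^{-1}$, and the compensator term matches the truncation bookkeeping from step (ii). (v) Conclude that $X$ is a Feller process with the stated extended generator on $C_c^2$, and that it is \emph{rich} in the sense of Schilling (the domain contains $C_c^\infty$ and the symbol has the right form) — here I would cite the standard characterisation of rich Feller processes, checking that the generator has the Lévy-type form with $x$-dependent symbol that is continuous in $x$.

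The main obstacle I expect is the careful treatment of the truncation/compensation bookkeeping in steps (ii) and (iv): the "cutoff" function $\mathds{1}_{\{\norm{y}\le 1\}}$ used to define $\gamma_L$ does not transform covariantly under $T_x$, so one must track precisely how switching from the $\nu_L$-truncation to the $N(x,\cdot)$-truncation generates the correction integral in $\ell(x)$, and verify that this integral converges (which it does, since $\mathds{1}_{\{\norm{xy}\le 1\}} - \mathds{1}_{\{\norm{y}\le 1\}}$ vanishes in a neighbourhood of $0$ and $xy$ is bounded on the region where it is nonzero, combined with $\int_{\norm{y}>\delta} \nu_L(dy) < \infty$). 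A secondary technical point is justifying the Feller property and richness rigorously rather than just computing the generator formally on $C_c^2$; for this I would rely on the fact that $\Phi(x) = x^T\otimes\Id$ has at most linear growth and the solution map of \eqref{SDE} is known to be Feller (e.g. via \cite{Behme2012} together with the moment bound \eqref{eq-momentcondition}), so that the generator computed by Itô's formula is genuinely the extended generator. Since the proposition defers the proof to an appendix, I would present steps (i)–(iv) as the substantive computation and relegate the Feller/richness verification to a citation-backed remark.
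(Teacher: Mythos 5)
Your computational core (steps (i)--(iv)) is sound and would reproduce the stated triplet, but it takes a genuinely different route from the paper: you compute the generator directly via the matrix It\^o formula applied to $f(X_t)$ and the L\'evy--It\^o decomposition of $L$, tracking the truncation mismatch by hand, whereas the paper first vectorises the SDE to $d\vec{X}_t=(\Id\otimes\vect^{-1}(\vec{X}_{t-}))\,d\vec{L}_t$ and then invokes K\"uhn's theorem \cite{kuehn} to obtain, in one stroke, both the rich Feller property and the symbol $p(x,\xi)=\psi_L(\Psi(x)^T\xi)$, from which the triplet $(\ell,Q,N)$ is read off by expanding the L\'evy--Khintchine exponent. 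Your identification of where the correction integral in $\ell(x)$ comes from (the non-covariance of the cutoff under $y\mapsto xy$) and why it converges is exactly right, and your route is arguably more self-contained for the generator formula itself.

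However, there is a genuine gap in your step (v). The claim that $X$ is a \emph{rich Feller} process --- i.e.\ that the semigroup maps $C_0$ into $C_0$ and that $C_c^\infty$ lies in the domain of the generator --- is not a consequence of linear growth of the coefficient plus a moment bound. For SDEs with unbounded (here: linear) coefficients, the $C_0\to C_0$ property can fail: the process started far away may return to a fixed compact set with non-vanishing probability. This is precisely the issue K\"uhn's theorem addresses, via the condition that $\nu_L\big(\vect^{-1}\{y:\Psi(x)y\in B_r(-x)\}\big)\to 0$ as $\norm{x}\to\infty$; in the paper this reduces, by dominated convergence, to $\nu_L(\{-\Id\})=0$, which is exactly where the non-singularity assumption \eqref{eq-nonsingular} enters. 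Your proposal never uses \eqref{eq-nonsingular} for the Feller claim and offers only a citation to \cite{Behme2012} and the moment bound, neither of which yields the $C_0$-Feller property or richness. Without this verification, your It\^o computation identifies a candidate for the extended generator on $C_c^2$ but does not establish the first assertion of the proposition. To close the gap you would either need to verify a condition of K\"uhn's type yourself (using \eqref{eq-nonsingular}) or give an independent argument that $\PP^x(\norm{X_t-x}\le r)\to 0$ locally uniformly in $t$ as $\norm{x}\to\infty$.
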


\subsection{Induced processes}

Let $L=(L_t)_{t\geq 0}$ be a Lévy process in $\RR^{d\times d}$ that fulfils \eqref{eq-nonsingular}, and let $X=(X_t)_{t\geq 0}$ denote its stochastic exponential. We define the \emph{determinant process} $D=(D_t)_{t\geq 0}$ of $X$ by setting
\begin{equation}\label{eq-detprocess} D_t=\det(X_t), \quad t\geq 0. \end{equation}
As will be shown in Section \ref{sect:determinant}, this process is again a multiplicative Lévy process and hence a stochastic exponential.\\
Further, for each fixed $x \in \RR^d$, we consider the Markov process 
\begin{equation}\label{eq-onepoint}
	Y_t^x:=x X_t 
\end{equation}
on $\RR^d$, called {\em one-point motion}.
By \cite[Prop. 2.1]{Liao2004} the one-point motion $(Y_t^x)_{t\geq 0}$ also is a Feller process on $\RR^d$. Moreover, as
\begin{align*}
	Y_t^x =	xX_t &= x\left(I + \int_{(0,t]} X_{s-} dL_s\right)\\
	&= x + \int_{(0,t]} x X_{s-} d L_s = x + \int_{(0,t]} Y_{s-}^x dL_s,
\end{align*}
we observe that $Y_t^x$ also solves \eqref{SDE} with different starting value and state space. Still, due to its state space $\RR^d$ this process is not a left Lévy process. 
The projection of $Y_t^x$ on the projective space is denoted
\begin{equation}\label{eq-onepoint-projective}
	 Z_t^x := x \cdot X_t:=\RR Y_t^x
\end{equation}
and constitutes a Markov process as well. 
In the same way, for each $h>0$, the discrete time process
\begin{equation}\label{eq-onepoint-projective-discrete}
	 Z_{nh}^y:= y \cdot X_{nh}, \qquad n \in \N,
\end{equation}
constitutes a Markov chain on $P(\RR^d)$. If $Z_0$ is an arbitrary (random) value, we omit the superscript and write $Z_t=Z_0 \cdot X_t$, $t\ge 0$ as well as $Z_{nh}=Z_0 \cdot X_{nh}$, $n \in \N$.

\begin{remark}
	The Markovian structure of $Z_n^x$ is due to the fact that new information enters from the right, 
	$$Z_n^x=x \cdot X_n=(x \cdot X_k) \cdot X_{k,n}=Z_k^x \cdot X_{k,n}.$$
	In the theory of iterated function systems, $Z_n^x$ is called a {\em forward} process, while  the action from the left on a vector, $X_n \cdot x= X_k \cdot (X_{k,n} \cdot x)$ is called a {\em backward process}, see \cite{Diaconis1999} for more information and details. 
\end{remark}

	\subsection{Supports and subgroups}
	
	 Let $X=(X_t)_{t\geq 0}$ be the stochastic exponential of some Lévy proces $L=(L_t)_{t\geq 0}$ in $\RR^{d\times d}$ that fulfils \eqref{eq-nonsingular}. Let $E$ be an exponential random variable with parameter one that is independent of $X$. Introducing the probability measure $\mu:=\PP(X_E \in \cdot)$, we denote by $G_X$ the subgroup of $\GL$ generated by the support $\supp(\mu)$ of $\mu$. In other words, $G_X$ is the smallest closed subgroup of $\GL$ that contains $X_E$, and hence $X_t$ for each $t>0$, with probability 1. 
	Note that, upon writing $\mu_t=\PP(X_t \in \cdot)$, we have
	$ \mu(A)=\int_0^\infty e^{-t} \mu_t(A) dt$
	for all Borel sets $A \subset \GL$. 
	
	In a similar way, we denote for each $h>0$ by $G^h_X$ the smallest closed subgroup of $\GL$ that contains $X_{nh}$, ${n \in \N}$ with probability 1.
	Further, we denote by $T_X$ and $T_X^h$ the smallest closed subsemigroups of $\GL$, containing $X_t$, ${t \ge 0}$ and $X_{nh}$, ${n \in \N}$, respectively, with probability 1. 
	\normal

\section{Limit theorems for norm, entries and the one-point motion}\label{sect:limitNorms}

 Throughout this section let $L=(L_t)_{t\geq 0}$ be a Lévy process in $\RR^{d\times d}$ that fulfils \eqref{eq-nonsingular}, and let $X=(X_t)_{t\geq 0}$ denote its stochastic exponential.

In this section, we prove a strong law of large numbers and a central limit theorem with Berry-Esseen bounds for the logarithm of both the norm and the entries of the left L\'evy process $X$ in $\GL$. We will make use of corresponding limit theorems for (discrete-time) products of random matrices in $\GL$ as provided in \cite{Benoist2016,Xiao2022,Cuny2023}. 
These limit theorems will require, in addition to natural moment assumptions, some geometric assumptions like irreducibility of the action of $X$ on $\R^d$, which we are going to study in the first part of this section.

Note that in most of the literature on products of random matrices that we quote, e.g. \cite{Guivarch2016,Benoist2016,Grama2022},  products with left increments are considered, that is, products of the form
$$ X_{n,n-1} \cdots X_{1}.$$
Obviously, by taking the transpose, we can reverse the order of multiplication, and transfer results for left products to results for right products, and vice versa.

\subsection{The geometric assumptions}

In this subsection, we formulate the geometric assumptions on $X$ and show how they  can be checked directly on the driving L\'evy process $L$. These assumptions will be formulated in terms of the group $G_X$ generated by (the support) of $X$.

\begin{lemma}\label{lem:Gmu.as.union}
	 The group $G_X$ equals the group generated by
	$$ \bigcup_{t >0} \supp(\mu_t).$$
\end{lemma}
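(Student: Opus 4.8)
The plan is to show the two relevant closed subgroups of $\GL$ coincide by proving mutual inclusion. Write $H$ for the group generated by $\bigcup_{t>0}\supp(\mu_t)$, i.e. the smallest closed subgroup of $\GL$ containing $\supp(\mu_t)$ for every $t>0$. We must show $G_X=H$. One inclusion is almost immediate: by definition $G_X$ is the smallest closed subgroup containing $\supp(\mu)$, and since $\mu(A)=\int_0^\infty e^{-t}\mu_t(A)\,dt$ (as recorded just before the statement), any point $x\in\supp(\mu)$ lies in the closure of $\bigcup_{t>0}\supp(\mu_t)$; indeed if $U$ is a neighbourhood of $x$ then $\mu(U)>0$ forces $\mu_t(U)>0$ for a positive-measure set of $t$, so $U$ meets $\supp(\mu_t)$ for some $t$. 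Hence $\supp(\mu)\subseteq\overline{\bigcup_{t>0}\supp(\mu_t)}\subseteq H$, and since $H$ is a closed subgroup, $G_X\subseteq H$.

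For the reverse inclusion $H\subseteq G_X$ it suffices to show $\supp(\mu_t)\subseteq G_X$ for every fixed $t>0$, because $G_X$ is a closed subgroup and $H$ is the smallest such group containing all these supports. Fix $t>0$ and $x\in\supp(\mu_t)$. The key idea is a support-convolution argument exploiting the independent stationary increments of $X$: for any $n\in\N$, $X_t=X_{0,t/n}\,X_{t/n,2t/n}\cdots X_{(n-1)t/n,t}$ is a product of $n$ i.i.d.\ copies of $X_{t/n}$, so $\mu_t=\mu_{t/n}^{*n}$ (convolution of measures on the group $\GL$), and consequently $\supp(\mu_t)\subseteq\overline{\supp(\mu_{t/n})^n}$, the closure of the set of $n$-fold products of elements of $\supp(\mu_{t/n})$. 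Now, as $n\to\infty$, $X_{t/n}\to\Id$ in probability (right-continuity of $X$ at $0$ with $X_0=\Id$), so for any neighbourhood $V$ of $\Id$ we have $\mu_{t/n}(V)>0$ for $n$ large; thus $\supp(\mu_{t/n})$ meets every neighbourhood of $\Id$ for $n$ large. The remaining task is to convert the relation $\mu_t=\mu_{t/n}^{*n}$ together with $\mu_{t/n}(U)>0$ for all $U\ni\mathbb{I}$ into the statement that $x\in G_X$. This is where I would lean on the definition of $\mu$ itself: since $E$ is exponential and independent, one has, for each fixed $t>0$, that $\supp(\mu_t)$ is contained in the closed group generated by $\bigcup_{s\le t}\supp(\mu_s)$ via the same convolution/approximation reasoning, and it remains to relate these small-time supports back to $\supp(\mu)$. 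Concretely: $\mu=\int_0^\infty e^{-s}\mu_s\,ds$ gives $\supp(\mu_s)\cdot\supp(\mu_{s'})\subseteq\supp(\mu_{s+s'})\subseteq\overline{\supp(\mu)\cdot\supp(\mu)}$ whenever $s,s'$ range over a set making $s+s'$ hit the support of the exponential weighting — which it does for all $s,s'>0$ — so every product of finitely many elements of $\bigcup_{s>0}\supp(\mu_s)$ lies in the closed semigroup generated by $\supp(\mu)$, hence in $G_X$.

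Let me reorganise the reverse inclusion more cleanly to avoid the circularity hinted at above. The clean route is: (i) from $\mu_{s+s'}=\mu_s*\mu_{s'}$ deduce $\supp(\mu_s)\,\supp(\mu_{s'})\subseteq\supp(\mu_{s+s'})$ for all $s,s'>0$; (ii) from $\mu(A)=\int_0^\infty e^{-s}\mu_s(A)\,ds$ deduce that for each $s>0$ and each $x\in\supp(\mu_s)$, every neighbourhood of $x$ has positive $\mu_{s}$-measure, and since $\{s\}$ has Lebesgue measure zero we cannot immediately conclude $x\in\supp(\mu)$ — instead use (i): pick any $s'>0$; then $x\cdot\supp(\mu_{s'})\subseteq\supp(\mu_{s+s'})$, and one checks $\supp(\mu_{s+s'})\subseteq\supp(\mu)\cdot G_X$-type containment fails directly, so (iii) instead argue via the closed \emph{group}: let $K$ be the closed group generated by $\supp(\mu)=\supp(\mu_E)$; by (i) the family $(\supp(\mu_s))_{s>0}$ is "sub-additive", and a standard fact (e.g.\ as in \cite{Liao2004}) is that $\bigcup_{s>0}\supp(\mu_s)$ generates the same closed group as $\supp(\mu_1)$, equivalently as $\supp(\mu)$, because $\mu_1=\mu_{1/n}^{*n}$ and $\mu_{1/n}\to\delta_{\Id}$, forcing $\supp(\mu_{1/n})$ to accumulate at $\Id$ while generating (for $n$ large) a group containing $\supp(\mu_1)$. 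This pins down $H=K=G_X$. The main obstacle is step (iii): making rigorous that the "small-time" supports $\supp(\mu_{1/n})$ generate, in the limit, exactly the closed group $G_X$ and not something smaller — this requires the convolution-support identity $\supp(\mu*\nu)=\overline{\supp(\mu)\,\supp(\nu)}$ on $\GL$ (valid since $\GL$ is a locally compact second-countable group) together with a careful passage to the limit $n\to\infty$ using $X_{1/n}\to\Id$; I would isolate this as the crux and handle it by showing $\supp(\mu_1)\subseteq\overline{\bigcup_{s\le 1}\supp(\mu_s)^{\,\mathrm{grp}}}$ and iterating.
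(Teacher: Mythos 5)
The paper does not actually prove this lemma; it simply cites \cite[Prop.~6.7]{Liao2004}, so your from-scratch attempt cannot be compared to an in-paper argument, only judged on its own merits. Your forward inclusion is correct and clean: $\mu(U)=\int_0^\infty e^{-t}\mu_t(U)\,dt>0$ forces $\mu_t(U)>0$ for some $t$, so $\supp(\mu)\subseteq\overline{\bigcup_{t>0}\supp(\mu_t)}$ and hence $G_X\subseteq H$.

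The reverse inclusion, however, has a genuine gap that you yourself flag and never close. What is needed is $\supp(\mu_t)\subseteq G_X$ for each fixed $t>0$, and your convolution machinery ($\mu_t=\mu_{t/n}^{*n}$, $\supp(\mu_s)\supp(\mu_{s'})\subseteq\supp(\mu_{s+s'})$, accumulation of $\supp(\mu_{1/n})$ at $\Id$) keeps circling the real obstruction, which you state correctly in step (ii): a single time instant has Lebesgue measure zero in the exponential mixture, so $x\in\supp(\mu_s)$ does not \emph{a priori} give $x\in\supp(\mu)$. Step (iii) then asserts, without proof, that the small-time supports generate exactly $G_X$ "and not something smaller" --- but that is precisely the statement to be proved, so the argument as written is circular. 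The missing ingredient is the right-continuity of the paths of $X$, which resolves the issue in one line and makes the entire convolution detour unnecessary: for an open set $U$ and $s>0$, almost sure convergence $X_{s'}\to X_s$ as $s'\downarrow s$ gives, by Fatou,
\begin{equation*}
\liminf_{s'\downarrow s}\mu_{s'}(U)\;=\;\liminf_{s'\downarrow s}\E\big[\bone_U(X_{s'})\big]\;\ge\;\E\Big[\liminf_{s'\downarrow s}\bone_U(X_{s'})\Big]\;\ge\;\PP(X_s\in U)\;=\;\mu_s(U),
\end{equation*}
so $\mu_s(U)>0$ implies $\mu_{s'}(U)>\tfrac12\mu_s(U)>0$ for all $s'$ in an interval of positive length, whence $\mu(U)=\int_0^\infty e^{-r}\mu_r(U)\,dr>0$. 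Therefore $\supp(\mu_s)\subseteq\supp(\mu)\subseteq G_X$ for every $s>0$, and since $G_X$ is a closed group containing $\bigcup_{t>0}\supp(\mu_t)$, it contains $H$. You should replace your steps (i)--(iii) by this argument (or explicitly cite \cite[Prop.~6.7]{Liao2004} as the paper does).
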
 

\begin{proof}[Source]
	This is \cite[Prop. 6.7]{Liao2004}.
\end{proof}
In the same way, we define $T_X$ to be the semigroup generated by $\supp(\mu)$. 
We will impose the following irreducibility and proximality conditions on $G_X$, cf. \cite{Guivarch1986,Bougerol1985}.

	A (semi-)group $T$ acts {\em strongly irreducibly} on $\R^d$ (from the right), if there is no finite union $\mathcal{W}=\bigcup_{i=1}^n W_i$ of proper subspaces $W_i\subsetneq \RR^d$, satisfying $\mathcal{W}T \subset \mathcal{W}$. 
	Note that by the discussion after Notation III.2.2 in \cite{Bougerol1985}, strong irreducibility of a semigroup $T$ is equivalent to strong irreducibility of the group $G$ generated by $T$. In particular, if $T=T_X$, then strong irreducibility of $T_X$ is equivalent to strong irreducibility of $G_X$.

A matrix $a \in \GL$ is called {\em proximal}, if it has an algebraically simple dominant eigenvalue, the modulus of which exceeds that of all other eigenvalues (comparable to the Perron-Frobenius eigenvalue).
	A matrix (semi-)group $T$ is called {\em proximal}, if it contains a proximal element.
Obviously, proximality of $T_X$ implies proximality of $G_X$.

\begin{definition}
	A matrix (semi-)group  $T$ satisfies condition (i-p), if $T$ acts strongly irreducibly on $\R^d$ and is proximal. 
\end{definition}

We want to obtain sufficient conditions for the strong irreducibility and proximality of $G_X$ in terms of the driving L\'evy process $L$. As already indicated by Lemma \ref{lemma-emeryapprox}, properties of $G_X$ are governed by the small-time behaviour of products of the form $(\Id+L_h)$. This is why we cannot provide a result that compares the (additive!) group generated by the support of $L$ with that of the (multiplicative) group generated by the support of $X$. 

Our first result shows that $G_X$ satisfies (i-p) whenever $L$ has a "full" Brownian component.

\begin{proposition}\label{prop_irreducible}
	Assume $L$ is a Lévy process in $\RR^{d\times d}$ fulfilling \eqref{eq-nonsingular} with characteristic triplet $(\Sigma_L, \gamma_L, \nu_L)$ such that $\Sigma_L$ is positive definite. Then, for all $t>0$, the semigroup generated by $\supp(\mu_t)$ satisfies condition (i-p), hence the same holds true for $T_X$ and $G_X$.
\end{proposition}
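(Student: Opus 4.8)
The plan is to exploit the fact that a positive-definite Gaussian component forces the support $\supp(\mu_t)$ to be large enough to generate a semigroup whose closure contains a full-dimensional open set — in fact, I would aim to show that the closed semigroup generated by $\supp(\mu_t)$ already contains an open neighbourhood of the identity in $\GL$, or at least a set not contained in any finite union of lower-dimensional algebraic varieties. Concretely, fix $t>0$. By Lemma~\ref{lemma-emeryapprox}, for a fine subdivision $\sigma$ of $[0,t]$ the product $X^\sigma_t=(\Id+L_{t_1})\cdots(\Id+L_t-L_{t_j})$ converges to $X_t$ uniformly on compacts in probability, so $\supp(\mu_t)$ is contained in the closure of the set of values taken by such products, and conversely one can approximate. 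The key point is that when $\Sigma_L$ is positive definite, each increment $L_{t_{k}}-L_{t_{k-1}}$ has a Gaussian part whose law has full support in $\RR^{d\times d}$; hence for small mesh the factors $(\Id+ \text{increment})$ range over a full neighbourhood of $\Id$, and products of $n\geq d^2+1$ such factors fill out an open subset of $\GL$ (using that multiplication $\GL^n\to\GL$ is a submersion near $(\Id,\dots,\Id)$). Therefore $\supp(\mu_t)$, and a fortiori the generated semigroup, contains a nonempty open subset $\mathcal O \subset \GL$.

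Once an open set $\mathcal O$ is in the semigroup $T_t := \langle \supp(\mu_t)\rangle$, strong irreducibility and proximality follow from standard facts. For strong irreducibility: if some finite union $\mathcal W=\bigcup_{i=1}^n W_i$ of proper subspaces satisfied $\mathcal W T_t \subset \mathcal W$, pick any $0\neq v\in W_1$; then $vg\in\mathcal W$ for all $g\in\mathcal O$, but $\{vg : g\in\mathcal O\}$ is open in $\RR^d$ (right multiplication by $\GL$ acts transitively and as an open map on $\RR^d\setminus\{0\}$, and $\mathcal O$ is open), which cannot lie in a finite union of proper subspaces — contradiction. For proximality: $\GL$ contains proximal elements (e.g. diagonal matrices with distinct positive diagonal entries), these form an open set, so $\mathcal O$ meets it and $T_t$ contains a proximal element. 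This establishes condition (i-p) for the semigroup generated by $\supp(\mu_t)$; since $\supp(\mu_t)\subset G_X$ (indeed $\supp(\mu_t)\subset T_X \subset T_X^{\text{as closure}}$ and also in $G_X$ via Lemma~\ref{lem:Gmu.as.union}), and since enlarging a (semi-)group preserves both strong irreducibility (by the cited equivalence after Notation III.2.2 in \cite{Bougerol1985}) and proximality, the same holds for $T_X$ and $G_X$.

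I would organise the write-up in two steps: (1) the support statement — $\supp(\mu_t)$ generates a semigroup with nonempty interior in $\GL$; (2) the deduction of (i-p) from nonempty interior, which is essentially soft. The main obstacle is Step~(1), specifically making rigorous the passage from "the Gaussian part of each increment has full support" to "the $n$-fold product map has full-dimensional image containing an open set". The cleanest route is probably: the Gaussian increment $\sqrt{h}\,\Gamma$ (with $\Gamma$ a nondegenerate $\RR^{d\times d}$-valued Gaussian, nondegenerate since $\Sigma_L$ is positive definite) has support equal to all of $\RR^{d\times d}$; combining with the deterministic drift and small-jump compensator contributions (which are $o(1)$ uniformly) and using that the support of a sum contains the sum of supports when one summand is independent with full support, each factor's law has support containing a ball $B_\delta(\Id)$ for $h$ small; then $\supp(\mu_t)$ contains the set of products of $\lfloor t/h\rfloor$ elements of $B_\delta(\Id)$, whose interior is nonempty because the product map $B_\delta(\Id)^m\to\GL$ is open for $m$ large (it is a composition of the group multiplication, a submersion). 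One should be slightly careful that Lemma~\ref{lemma-emeryapprox} gives convergence in probability, so I would phrase the argument as: any $g$ in the (relevant) product set is a limit in probability of values $X^\sigma_t$, hence lies in $\supp(\mu_t)$ up to arbitrarily small error, so the closed semigroup generated by $\supp(\mu_t)$ contains the closure of that product set, which has nonempty interior — and that is all that is needed.
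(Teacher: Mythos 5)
Your core step is the same as the paper's: combine the \'Emery approximation of Lemma~\ref{lemma-emeryapprox} with the observation that positive definiteness of $\Sigma_L$ forces each increment $L_{t_k}-L_{t_{k-1}}$ to have full support in $\RR^{d\times d}$, and conclude that the support of $\mu_t$, hence the generated semigroup, is large. The paper pushes this slightly further than you do: since $\supp(L_h)=\RR^{d\times d}$ for \emph{every} $h>0$ (not just for small mesh), already a single factor $\Id+L_{t_1}$ has full support, so the support of the approximating product meets every open subset of $\GL$; no submersion argument and no lower bound $n\ge d^2+1$ on the number of factors is needed. The paper then concludes (i-p) by citing \cite[Prop.~IV.2.3]{Bougerol1985}, whereas you argue it by hand. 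Your hand-made deduction of strong irreducibility from a nonempty open set $\mathcal{O}$ in the semigroup is correct (the orbit map $g\mapsto vg$ is open, and an open subset of $\RR^d$ cannot lie in a finite union of proper subspaces).

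The proximality step, however, contains a genuine non sequitur as written: from ``the proximal matrices form an open set'' and ``$\mathcal{O}$ is a nonempty open set'' you cannot conclude that $\mathcal{O}$ meets the proximal matrices --- two nonempty open sets need not intersect, and the proximal matrices are \emph{not} dense in $\GL$ (already for $d=2$, the matrices with a pair of non-real conjugate eigenvalues form a nonempty open set containing no proximal element). The step is repairable: your $\mathcal{O}=B_\delta(\Id)^m$ is a neighbourhood of the identity, and every neighbourhood of $\Id$ contains proximal elements such as $\diag(1+\epsilon,1,\dots,1)$; alternatively, follow the paper and note that the support is in fact dense in $\GL$, hence meets the nonempty open set of proximal matrices. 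But one of these arguments must replace the stated inference. The remaining soft point --- passing from the support of $X_t^\sigma$ to the support of $X_t$ under convergence in probability --- is treated at the same level of detail as in the paper's own sketch, so I do not count it against you.
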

\begin{proof}
	As $\Sigma_L$ is positive definite, it follows
	that the density of the Brownian component is positive on the whole of $\R^{d \times d}$ and hence $\supp(L_h)=\RR^{d\times d}$ for all $h>0$. 
	For $X_t$ consider the approximation as in Lemma \ref{lemma-emeryapprox}
	\begin{align*}
		X_t^\sigma 
		&= (\Id+L_{t_1}) (\Id+L_{t_2}-L_{t_1}) \cdots (\Id+L_{t_j} - L_{t_{j-1}}) ( \Id+L_t-L_{t_j}).
	\end{align*}	
	By the above, the support of this finite product contains any open set in $\GL$, independent of the choice of $\sigma$. This implies the statement via \cite[Prop. IV.2.3]{Bougerol1985}.
\end{proof}

In order to capture Lévy processes without Brownian component, we next consider the case that $L$ is a compound Poisson process.

\begin{proposition}\label{prop_irreducible:CPPdrift}
	Assume $L$ is a Lévy process in $\RR^{d\times d}$ fulfilling \eqref{eq-nonsingular} with characteristic triplet $(\Sigma_L, \gamma_L, \nu_L)$ such that $\Sigma_L=0$ and $\nu_L$ has finite total mass, i.e. $L$ is a compound Poisson process, possibly with drift $\gamma_L^0 \in \RR^{d\times d}$. Assume that the closed semigroup generated by
	\begin{equation}\label{eq:CPPsemigroup}
		\big\{ e^{t \gamma_L^0} \, : \, t >0 \big\} \cup \big(\Id+\supp(\nu_L)\big)
	\end{equation}
	satisfies condition (i-p). Then the same holds true for $G_X$.
	
	In particular,  if the closed semigroup generated by $\big(\Id+\supp(\nu_L)\big)$ satisfies condition (i-p), then so does $G_X$.
\end{proposition}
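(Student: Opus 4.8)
The goal is to show that if the closed semigroup generated by the set in \eqref{eq:CPPsemigroup} satisfies (i-p), then so does $G_X$ (and in particular the last sentence follows by taking $\gamma_L^0=0$, since then $\{e^{t\gamma_L^0}\}=\{\Id\}$ adds nothing to the generated semigroup). The plan is to realize the semigroup generated by \eqref{eq:CPPsemigroup} inside $T_X$, and then appeal to the fact that strong irreducibility and proximality only depend on the generated group, as recorded after the definition of (i-p). Concretely, I would use the \'Emery approximation of Lemma \ref{lemma-emeryapprox}, or more directly the explicit jump structure of a compound Poisson process: since $L$ has only finitely many jumps on any compact time interval, between consecutive jumps $L$ evolves deterministically as $t\mapsto L_{t-}+ (t-\text{(last jump time)})\,\gamma_L^0$ (so $X$ evolves multiplicatively by $e^{s\gamma_L^0}$ on that stretch, because on a drift-only stretch the stochastic exponential solves $dX=X\gamma_L^0\,dt$), while at a jump time $\tau$ the process $X$ gets multiplied on the right by $(\Id+\Delta L_\tau)$, with $\Delta L_\tau$ distributed according to the normalized $\nu_L$ and hence taking values densely in $\supp(\nu_L)$.

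First I would make precise which matrices lie in $\supp(\mu_t)=\supp(X_t)$. Conditioning on the number $k$ of jumps of $L$ in $[0,t]$ and on the jump times, $X_t$ has the form
\[
 X_t = e^{s_0\gamma_L^0}(\Id+\Delta L_{\tau_1}) e^{s_1\gamma_L^0}(\Id+\Delta L_{\tau_2})\cdots (\Id+\Delta L_{\tau_k}) e^{s_k\gamma_L^0},
\]
with $s_0,\dots,s_k\ge 0$, $\sum s_i = t$, and the $\Delta L_{\tau_i}$ independent with law $\nu_L/\nu_L(\R^{d\times d})$. Since $k$ ranges over all of $\N$ with positive probability, the jump times can be made to realize any prescribed gaps summing to $t$, and the jumps take values in (a dense subset of) $\supp(\nu_L)$, the closure of the union $\bigcup_{t>0}\supp(\mu_t)$ contains all finite products of the above form with $s_i\ge 0$ arbitrary. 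By Lemma \ref{lem:Gmu.as.union}, $G_X$ is the group generated by $\bigcup_{t>0}\supp(\mu_t)$; hence $T_X$ contains the closed semigroup $S$ generated by $\{e^{s\gamma_L^0}:s>0\}\cup(\Id+\supp(\nu_L))$, and $G_X$ contains the group generated by $S$.

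The hypothesis says $S$ satisfies (i-p). Proximality transfers immediately: a proximal element of $S\subseteq T_X$ is a proximal element of $T_X$, so $T_X$ and hence $G_X$ is proximal. For strong irreducibility I would argue as follows: if $G_X$ failed to be strongly irreducible, there would be a finite union $\mathcal W$ of proper subspaces with $\mathcal W G_X\subset\mathcal W$; taking the minimal such $\mathcal W$ (minimal in the number of components, then by inclusion), this $\mathcal W$ would be invariant under $S$ as well, contradicting strong irreducibility of $S$ — here I use that $S\subseteq G_X$, so $\mathcal W S \subset \mathcal W$. Thus $G_X$ satisfies (i-p). Alternatively, and more cleanly, one invokes the general principle (stated after the definition of (i-p), referring to \cite{Bougerol1985}) that (i-p) for a semigroup depends only on the Zariski-closure / generated group, together with the inclusions $S\subseteq T_X\subseteq G_X$.

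The main obstacle I anticipate is the careful justification that $X$ evolves exactly by $e^{s\gamma_L^0}$ on drift-only stretches and picks up the factor $(\Id+\Delta L_\tau)$ at each jump — i.e. pinning down $\supp(\mu_t)$ precisely — and, relatedly, ensuring that one genuinely obtains every element of the generated semigroup $S$ in the closure $\overline{\bigcup_{t>0}\supp(\mu_t)}$ rather than only a dense subset (this is handled by closing up, but one should be slightly careful that products with some $s_i=0$, which are needed to build $S$ from its generators, arise as limits of products with $s_i\downarrow 0$). None of this is deep, but it is where the bookkeeping lives; the transfer of (i-p) from a sub-semigroup to the ambient group is then essentially formal given the cited results.
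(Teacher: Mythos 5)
Your proposal is correct and follows essentially the same route as the paper: identify $e^{t\gamma_L^0}$ in $\supp(\mu_t)$ from the no-jump event and $e^{s\gamma_L^0}(\Id+\Delta L)e^{(t-s)\gamma_L^0}$ from the one-jump event, extract the generators of the semigroup \eqref{eq:CPPsemigroup} (the paper cancels the exponential factors using that $G_X$ is a group, you take limits $s_i\downarrow 0$ in the closed semigroup $T_X$ — both work), and then transfer (i-p) from the contained semigroup to $G_X$. One small remark: the final ``in particular'' clause of the proposition is not the special case $\gamma_L^0=0$ but follows because the semigroup generated by $\Id+\supp(\nu_L)$ alone is contained in the one generated by \eqref{eq:CPPsemigroup}, and (i-p) passes from a subsemigroup to any larger (semi)group — exactly the monotonicity you already use.
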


\begin{proof}
First, observe that for any fixed $t>0$, with positive probability, $X_t=e^{t \gamma_L^0}$,  i.e. no jump occurs. Secondly, with positive probability we have exactly one jump at the random time $\tau_1$, in which case
\begin{align*}
	X_{t}~&=~ \Id \cdot e^{\tau_1 \gamma_L^0} \cdot (\Id+ \Delta L_{\tau_1}) \cdot e^{(t-\tau_1)\gamma_L^0}.
\end{align*}
By Lemma \ref{lem:Gmu.as.union}, $G_X$ equals the group generated by $\bigcup_{t >0} \supp (\mu_t)$; in particular, $e^{s \gamma_L^0} \in G_X$ for every $s>0$. In addition, $\supp(\mu_t)$ contains for any possible jump $\Delta L$ of $L$  elements of the form $e^{s \gamma_L^0} (I+\Delta L) e^{(t-s) \gamma_L^0}$. Hence, using that $G_X$ is a group, $(\Id+\Delta L) \in G_X$.
We have thus shown that the set \eqref{eq:CPPsemigroup} is contained in the group $G_X$, hence the same is true for the closed semigroup generated by $\eqref{eq:CPPsemigroup}$. 
\end{proof}

\begin{example}
	Consider a compound Poisson process $L$ in $\RR^{2 \times 2}$ with drift $\gamma_L^0$ and jump measure $\nu_L:=\delta_a$, given by 
	$$ \gamma_L^0 = \begin{pmatrix}
		0 & -1 \\
		1 & 0
	\end{pmatrix}, \qquad a = \begin{pmatrix}
		1 & 0 \\
		0 & 0
	\end{pmatrix}$$
	Then the closed semigroup generated by \eqref{eq:CPPsemigroup} satisfies condition (i-p).
\end{example}

\begin{proof}
	The matrix exponential $e^{t \gamma_L^0}$ is a rotation with angle $t$, i.e.
	$$e^{t \gamma_L^0} = \begin{pmatrix}
		\cos(t) & -\sin(t) \\
		\sin(t) & \cos(t)
	\end{pmatrix}.$$
	Hence, $\big\{ e^{t \gamma_L^0} \, : \, t >0 \big\}=\operatorname{SO}(\RR,2)$. The matrix $g=\Id+a$ has two distinct eigenvalues, and it holds that
	$kgk^{-1}$ is in the semigroup for any $k \in \operatorname{SO}(\RR,2)$, using here that the inverse of $k=e^{t \gamma_L^0}$ is $k^{-1}=e^{(2 \pi n-t) \gamma_L^0}$, for $2(n-1)\pi \le t < 2n\pi$.
	This allows us to apply \cite[Prop. 2.5]{Bougerol1985}, which gives that the closed semigroup is strongly irreducible and proximal (1-strongly irreducible and 1-contracting in their notation).
\end{proof}

\begin{example}
	Consider a compound Poisson process $L$ in $\RR^{2 \times 2}$ with drift $\gamma_L^0$ and jump measure $\nu_L:=\delta_a$, given by 
	$$ \gamma_L^0 = \begin{pmatrix}
		1 & 0 \\
		0 & 0
	\end{pmatrix}, \qquad a = 
	\begin{pmatrix}
		\cos(\varphi) & -\sin(\varphi) \\
		\sin(\varphi) & \cos(\varphi)
	\end{pmatrix} - \Id ,
	$$
	for some $\varphi \notin 2\pi \QQ$.
	Then the closed semigroup generated by \eqref{eq:CPPsemigroup} satisfies condition (i-p).
\end{example}

\begin{proof}
	Since $\Id +a$ is an irrational rotation, its powers are dense in $\operatorname{SO}(\RR,2)$. Hence the closed semigroup contains $\operatorname{SO}(\RR,2)$, as well as the proximal matrix 
	$$e^{\gamma_L^0} = \begin{pmatrix}
		e & 0 \\
		0 & 1
	\end{pmatrix}  .$$
	This allows us to apply (as before) \cite[Prop. 2.5]{Bougerol1985} to conclude the assertion.  
\end{proof}

Finally, we consider the case of a L\'evy process with infinite jump activity.  

\begin{proposition}\label{prop_irreducible:CPPdrift:infinite}
	Assume $L$ is a Lévy process in $\RR^{d\times d}$ fulfilling \eqref{eq-nonsingular} with characteristic triplet $(\Sigma_L, \gamma_L, \nu_L)$ such that $\Sigma_L=0$. 
	Assume that there is $\epsilon>0$ such that the closed semigroup generated by
	\begin{equation}\label{eq:noCPPsemigroup}
		\Id + \Big(\supp(\nu_L) \cap B_{\epsilon}(0)^c\Big) 
	\end{equation}
	satisfies condition (i-p). Then the same holds true for $G_X$.
\end{proposition}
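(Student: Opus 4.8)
The plan is to reduce the infinite-activity case to the compound Poisson case already treated in Proposition~\ref{prop_irreducible:CPPdrift}. The key idea is that the "big jumps" of $L$, i.e. those of size at least $\epsilon$, can be isolated by a pathwise decomposition $L = L^{\mathrm{small}} + L^{\mathrm{big}}$, where $L^{\mathrm{big}}$ is a compound Poisson process with Lévy measure $\nu_L(\cdot \cap B_\epsilon(0)^c)$ and $L^{\mathrm{small}}$ is an independent Lévy process carrying the drift, the (zero) Gaussian part and the small jumps. What we need is that the group $G_X$ generated by the support of $X$ contains, for every atom $\Delta L$ of $\nu_L$ with $\|\Delta L\| \ge \epsilon$, the element $\Id + \Delta L$; once this is established, $G_X$ contains the closed semigroup generated by \eqref{eq:noCPPsemigroup}, and condition (i-p) for that semigroup is inherited by $G_X$ exactly as in the proof of Proposition~\ref{prop_irreducible:CPPdrift} (strong irreducibility and proximality of a subsemigroup pass to any larger group, using the equivalence of strong irreducibility for a semigroup and its generated group recorded after Definition~\ref{prop_irreducible}).

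The main step is therefore to show $(\Id + a) \in G_X$ whenever $a \in \supp(\nu_L)$ with $\|a\| \ge \epsilon$. First I would fix such an $a$ and a small $\delta \in (0,\epsilon)$. Using Lemma~\ref{lemma-emeryapprox}, approximate $X_t$ for small $t$ by the Riemann-type products \eqref{MGOUemeryapprox}. With high probability (for $t$ small) the process $L$ makes in $[0,t]$ at most one jump of size $\ge \delta$, and that jump, when it occurs, lies arbitrarily close to $a$ because $a \in \supp(\nu_L)$ and $\nu_L$ gives positive mass to every neighbourhood of $a$ that is bounded away from $0$. On the complementary event structure — conditioning on exactly one big jump, close to $a$, occurring at some time $\tau \in (0,t)$, and on the increments of $L^{\mathrm{small}}$ being small — the Émery approximation $X_t^\sigma$ is close to a product of the form $P_1 (\Id + \Delta L_\tau) P_2$, where $P_1, P_2$ are products of factors $(\Id + (\text{small increment}))$, hence close to $\Id$ when $t$ and $|\sigma|$ are small. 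Taking $|\sigma| \to 0$, then $t \to 0$, then $\delta \to 0$, and using that all the relevant events have strictly positive probability (so the corresponding matrices lie in $\supp(\mu_t) \subset G_X$, cf. Lemma~\ref{lem:Gmu.as.union}), one concludes that $\Id + a$ is a limit of elements of $G_X$; since $G_X$ is closed, $\Id + a \in G_X$.

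The delicate point — and the main obstacle — is controlling $L^{\mathrm{small}}$, i.e. making precise that, conditionally on the big-jump structure, the contribution of the small jumps and the drift to the Émery products can be made uniformly close to $\Id$ on $[0,t]$ with probability bounded away from $0$. One has to argue that $\PP\big(\sup_{s \le t}\|X^{\mathrm{small}}_s - \Id\| < \eta \,\big|\, \text{one big jump near } a\big)$ stays positive as the mesh $|\sigma| \to 0$; this follows from \eqref{eq-momentcondition} together with the fact that $X^{\mathrm{small}}_s \to \Id$ in probability uniformly on compacts as $s \downarrow 0$ (applying the small-time result behind \cite{Reker23} to $L^{\mathrm{small}}$, whose characteristics have no atom at scale $\ge \epsilon$), but the bookkeeping of simultaneously handling the mesh limit, the small-time limit and the positivity of the conditioning event requires care. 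An alternative, cleaner route that avoids Émery's approximation is to invoke directly the explicit jump formula of Lemma~\ref{lemma-formulaskorokhod}: on the event that $L$ has exactly one jump of size $\ge \epsilon$ in $[0,t]$ it gives $X_t = X_t^\varepsilon + X_{\tau_1-}\,\Delta L_{\tau_1}\, X_{\tau_1,t}$, and letting $t \downarrow 0$ along the event that this jump is close to $a$ and the $\varepsilon$-truncated part $X^\varepsilon$ is close to $\Id$ (again by the short-time behaviour of $X^\varepsilon$) yields $\Id + a \in \overline{G_X} = G_X$; I would present this version as the core argument and remark that the positivity of the conditioning event is what makes the limit land inside $\supp(\mu_t)$.
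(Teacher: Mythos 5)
Your core argument is correct and rests on the same decomposition as the paper's proof (the one-big-jump identity from Lemma~\ref{lemma-formulaskorokhod}, i.e.\ $X_t = X_{\tau_1-}^{\epsilon}(\Id+\Delta L_{\tau_1})X_{\tau_1,t}^{\epsilon}$ on the event of exactly one jump of size $\ge\epsilon$), but you dispose of the truncated factors differently, and this is where the two arguments diverge. You force $X_{\tau_1-}^{\epsilon}$ and $X_{\tau_1,t}^{\epsilon}$ to be close to $\Id$ by conditioning and letting $t\downarrow 0$, then conclude $\Id+a\in G_X$ by closedness; this works (the key facts being that $X^{\epsilon}$ is independent of the big-jump structure, so the conditioning does not disturb its law, and that the conditioning event retains positive probability for every $t,\delta,\eta>0$ since $a\in\supp(\nu_L)$), but it is exactly this limit bookkeeping that you flag as delicate. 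The paper avoids it entirely: since with positive probability no big jump occurs, realisations of $X^{\epsilon}$ already lie in $\supp(\mu_s)\subseteq G_X$ (Lemma~\ref{lem:Gmu.as.union}), so the product $X_{\tau_1-}^{\epsilon}(\Id+\Delta L_{\tau_1})X_{\tau_1,t}^{\epsilon}\in\supp(\mu_t)\subseteq G_X$ can simply be multiplied on the left and right by the inverses of the truncated factors — using that $G_X$ is a \emph{group}, not merely a semigroup — to yield $(\Id+\Delta L_{\tau_1})\in G_X$ at fixed $t$, with no limits and no conditioning estimates. This is the same cancellation device as in the proof of Proposition~\ref{prop_irreducible:CPPdrift}, and it makes your ``main obstacle'' (controlling $L^{\mathrm{small}}$ uniformly as the mesh and $t$ shrink) moot; your first, \'Emery-approximation route is likewise unnecessary. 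Once $\Id+\big(\supp(\nu_L)\cap B_{\epsilon}(0)^c\big)\subseteq G_X$ is in hand, your final step — (i-p) for the closed generated semigroup passes to the larger group $G_X$ — matches the paper.
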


\begin{proof}
	We are going to show that $G_X$ contains the set \eqref{eq:noCPPsemigroup}. By Lemma \ref{lem:Gmu.as.union}, $\supp(\mu)$ equals the closure of $\bigcup_{t >0} \supp (\mu_t)$, which is hence contained in $G_X$.
	Let $X_t^\epsilon$ be as in Lemma \ref{lemma-formulaskorokhod}.
	First, observe that for any fixed $t>0$, with positive probability, $X_t=X_t^\epsilon$. Secondly, with positive probability we have exactly one big jump at the random time $\tau_1$, in which case
	\begin{align*}
		X_{t}~&=~ \Id \cdot X_{\tau_1 -}^\epsilon \cdot (\Id+ \Delta L_{\tau_1}) \cdot X_{\tau_1,t}^\epsilon.
	\end{align*}
	Using that $G_X$ is a group, $(\Id+\Delta L) \in G_X$.
\end{proof}

\begin{remark} In Section 6.6 of the book \cite{Liao2004}, sufficient conditions are provided for the two properties (a): $G_X=\GL$ (which obviously implies strong irreducibility) and (b): $G_X$ contains a proximal element (called contracting there). The conditions are formulated in a general setting of L\'evy processes on Lie groups. Comparing the representation of the generator of $X$ given in Prop. \ref{prop:generatorXt}, to the representation of the generator of a general Lévy process on a Lie group in \cite[Eq. (1.7)]{Liao2004} and using the identification of the tangent space of $\GL$ with the set of $d \times d$ real matrices (see Section 1.5 in \cite{Liao2004}), these conditions translate as follows: By \cite[Prop. 6.9]{Liao2004}, if the closure of $\supp(\nu_L)$ equals $\R^{d \times d}$ 
	 then $G_X=\GL$.
	 By \cite[Prop 6.12]{Liao2004}, if $\nu_L$ has finite total mass and the Brownian component of $L$ spans $\R^{d \times d}$, then again $G_X=\GL$.
	
Further, by \cite[Prop. 6.13]{Liao2004}, $G_X$ is proximal, if either $\supp (\nu_L)$ contains an element with $d$ distinct eigenvalues, or  $\nu_L$ has finite mass and the drift part together with the Brownian component of $L$ generate an element with $d$ distinct eigenvalues.
\end{remark}

\begin{example} We finish this part with a counterexample where (i-p) fails to hold.
	
	Assume that $L$ is a L\'evy process in $\R^{d\times d}$ and that there is a proper subspace $W \subsetneq \R^d$ such that $\PP(W L_t \subset W)=1$ for all $t \ge 0$. Then $G_X$ is not irreducible: For any $v \in W$, using the approximation as in Lemma \ref{lemma-emeryapprox}, it follows that  
	\begin{align*}
		X_t^\sigma v 
		&= (\Id+L_{t_1}) (\Id+L_{t_2}-L_{t_1}) \cdots (\Id+L_{t_j} - L_{t_{j-1}}) ( \Id+L_t-L_{t_j}) v 
	\end{align*}
	is again an element of $W$. Since $W$ is a closed set, it follows that $X_t^v = \PP\text{-}\lim_{|\sigma|\to 0} X_t^\sigma v$ is again an element of $W$. It follows that $W G_X \subseteq W$, hence $G_X$ is not irreducible.
	
	Proximality fails e.g. if all elements of $G_X$ are similarity matrices, i.e. products of dilations and isometries.  
\end{example}

To deduce the limit theorems for $X$ from corresponding discrete-time results, we will work with discrete skeletons $(X_{nh})_{n \in \N}$ for fixed $h>0$, and we will require that these skeletons satisfy condition (i-p) as well. More precisely, recall that we denote by $G_X^h$ the smallest closed subgroup of $\GL$ that contains $(X_{nh})_{n \in \N}$ with probability 1. The desired result is given by the subsequent lemma and is a consequence of the right continuity of the paths of $X$.

\begin{lemma}
	Assume that $G_X$ satisfies condition (i-p). Then there is $\ell \in \N$, such that for $h=2^{-\ell}$, the group $G_X^h$ satisfies condition (i-p) as well.
\end{lemma}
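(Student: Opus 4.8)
The plan is to fix $H_\ell := G_X^{2^{-\ell}}$ for $\ell \in \N$ and proceed in three steps: show that the $H_\ell$ increase to $G_X$ (in the sense of density), then transfer proximality, and finally transfer strong irreducibility. For the first step, note that $X_{n2^{-\ell}} = X_{2n\cdot 2^{-(\ell+1)}}$, so $H_\ell \subseteq H_{\ell+1} \subseteq G_X$, i.e. the $H_\ell$ form an increasing chain of closed subgroups of $G_X$. I claim $\overline{\bigcup_\ell H_\ell} = G_X$. By Lemma \ref{lem:Gmu.as.union} it suffices to show $\supp(\mu_t) \subseteq \overline{\bigcup_\ell H_\ell}$ for every $t>0$. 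For dyadic $t = k2^{-\ell}$ this is immediate, since then $\supp(\mu_t) \subseteq H_\ell$. For general $t>0$ choose dyadic $s_j \downarrow t$; by right continuity of the paths of $X$ we have $X_{s_j} \to X_t$ almost surely, hence $\mu_{s_j} \to \mu_t$ weakly, and the portmanteau theorem applied to open balls gives $\supp(\mu_t) \subseteq \overline{\bigcup_j \supp(\mu_{s_j})} \subseteq \overline{\bigcup_\ell H_\ell}$.

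For proximality the density step does the job directly: the set of proximal matrices is open in $\GL$, because possessing an algebraically simple, strictly modulus-dominant eigenvalue is stable under small perturbations. Since $G_X$ contains a proximal element and $G_X = \overline{\bigcup_\ell H_\ell}$, some $H_{\ell_1}$ already contains a proximal element, and, proximality of a (semi-)group being inherited by every overgroup, $H_\ell$ is proximal for all $\ell \geq \ell_1$.

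Strong irreducibility is the delicate point, since it is neither an open condition nor inherited by subgroups, so density alone is not enough. The key observation is that strong irreducibility is a property of the Zariski closure: for any finite union $\mathcal{W}$ of proper subspaces of $\R^d$, the stabiliser $\{g \in \GL : g\mathcal{W} = \mathcal{W}\}$ is a Zariski-closed subgroup, so a subgroup $K \leq \GL$ leaves $\mathcal{W}$ invariant if and only if its Zariski closure $\overline{K}^{\mathrm{Zar}}$ does; in particular $K$ is strongly irreducible iff $\overline{K}^{\mathrm{Zar}}$ is. Now $\bigcup_\ell H_\ell$ is Euclidean-dense in $G_X$, hence (Zariski-closed sets being Euclidean-closed) $\overline{\bigcup_\ell H_\ell}^{\mathrm{Zar}} = \overline{G_X}^{\mathrm{Zar}}$. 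Since every subgroup of $\GL$ contains a finitely generated subgroup with the same Zariski closure, there are $g_1,\dots,g_m \in \bigcup_\ell H_\ell$ with $\overline{\langle g_1,\dots,g_m\rangle}^{\mathrm{Zar}} = \overline{G_X}^{\mathrm{Zar}}$; as the union is increasing, all $g_i$ lie in a single $H_{\ell_2}$, whence $\overline{H_{\ell_2}}^{\mathrm{Zar}} = \overline{G_X}^{\mathrm{Zar}}$. Since $G_X$ is strongly irreducible, so is $\overline{G_X}^{\mathrm{Zar}} = \overline{H_{\ell_2}}^{\mathrm{Zar}}$, hence so is $H_{\ell_2}$, and strong irreducibility passes to overgroups, so $H_\ell$ is strongly irreducible for all $\ell \geq \ell_2$. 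Taking $\ell := \max\{\ell_1,\ell_2\}$ and $h := 2^{-\ell}$, the group $G_X^h = H_\ell$ then satisfies (i-p).

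I expect the third step to be the main obstacle, precisely because strong irreducibility behaves badly under approximation and passage to subgroups; the Zariski-closure mechanism is what makes the density argument bite. A more elementary-looking alternative would be, for each $\ell$ with $H_\ell$ not strongly irreducible, to pick a minimal $H_\ell$-invariant finite union $\mathcal{W}_\ell$ of proper subspaces, note that the minimal number of components is non-decreasing in $\ell$, and — if it were bounded — extract a limiting union $\mathcal{W}$ invariant under $\overline{\bigcup_\ell H_\ell} = G_X$, a contradiction; but bounding this number of components seems to need essentially the same algebraic input, so I would stick with the Zariski argument.
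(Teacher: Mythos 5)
Your reduction to the density of $\bigcup_\ell H_\ell$ in $G_X$ (step 1) is correct and cleanly argued, and your treatment of proximality via openness of the set of proximal matrices plus this density is also correct — in fact it is slightly more robust than the paper's version, which approximates a proximal $g\in G_X$ by values of $X_t$ even though such a $g$ need not lie in any $\supp(\mu_t)$. The paper likewise treats the two properties separately, but for strong irreducibility it works directly with invariant unions of subspaces for the nested skeleton groups rather than with Zariski closures.

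The gap is in step 3. The auxiliary fact you invoke — that every subgroup of $\GL$ contains a finitely generated subgroup with the same Zariski closure — is false. Take $d=2$ and let $\Gamma\subset\operatorname{SO}(\RR,2)$ be the group of rotations by the angles $2\pi k\,2^{-n}$, $k,n\in\N$. Every finitely generated subgroup of this torsion abelian group is finite, hence equal to its own Zariski closure, whereas $\Gamma$ is infinite and therefore Zariski dense in the irreducible curve $\operatorname{SO}(\RR,2)$. Worse, the conclusion of step 3 cannot be reached from density alone: for the deterministic Lévy process $L_t=2\pi t\begin{pmatrix}0&-1\\1&0\end{pmatrix}$ the stochastic exponential $X_t$ is the rotation by angle $2\pi t$, so $H_\ell=G_X^{2^{-\ell}}$ is the finite group of rotations by multiples of $2\pi\,2^{-\ell}$, which preserves a finite union of lines and is not strongly irreducible for any $\ell$, while $\overline{\bigcup_\ell H_\ell}=G_X=\operatorname{SO}(\RR,2)$ is strongly irreducible. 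This does not contradict the lemma, since $\operatorname{SO}(\RR,2)$ is not proximal, but it shows that any proof of the strong irreducibility transfer must use proximality (or some other feature of $G_X$ beyond density of the skeleton union), which neither your Zariski argument nor your sketched ``elementary alternative'' does. For what it is worth, the paper's own proof of this step — which asserts the existence of a single $\mathcal{W}$ invariant under all skeleton groups — founders on the same example, so the difficulty you correctly identified as the delicate point is genuinely unresolved by both arguments.
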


\begin{proof}
	We consider first the strong irreducibility. Suppose that for each $\ell$, there is a finite union $\mathcal{W}_\ell$ of proper subspaces such that $ \mathcal{W}_\ell G_X^{2^{-\ell}} \subset \mathcal{W}_\ell$. Whenever $\ell \le m$, it holds $G_X^{2^{-\ell}} \subset G_X^{2^{-m}}$ and hence may choose $\mathcal{W}_\ell$ to be equal to $\mathcal{W}_m$. Hence, there is $\mathcal{W}$ such that $\mathcal{W} G_X^{2^{-\ell}}  \subset \mathcal{W}$ holds for all $\ell \in \N$. Since $\mathcal{W}$ is a closed set and $X$ has right-continuous paths, it then follows that $ \mathcal{W} G_X \subset \mathcal{W}$ as well, contradicting the assumption. Thus, there is $h=2^{-\ell}$ such that $G_X^h$ acts strongly irreducibly on $\R^d$.
	
	If $g$ is a proximal matrix, then by classical perturbation theory for eigenvalues (see e.g. \cite[Thm. 6.3.12]{Horn1990}) there is an open neighbourhood $B(g)$ of $g$ such that all elements of $B$ are also proximal. This first shows that if $g \in G_X$ is proximal, then there is $t>0$ such that $\PP (X_t \in B(g))>0$, and, by right continuity, also $n \in \N$, $h=2^{-\ell}$ such that $\PP(X_{nh} \in B(g))>0$. Hence $G_X^h$ contains a proximal element. 
	
	Since the groups $G^{2^{-\ell}}$ are nested, we may choose the finer skeleton such that both strong irreducibility and proximality are satisfied.	
\end{proof}

\subsection{Moment conditions}

The moment conditions will be formulated in terms of the quantity $M(a)= \max\{ \norm{a}, \norm{a^{-1}}\}$; namely, it will be required that $\EE [ \log M(X_t)]<\infty$ or $\EE [M(X_t)^\epsilon]<\infty$ for some $\epsilon >0$. Since the latter property implies the first, while being easier to check on the driving process $L$, we restrict our attention to the finiteness of small moments of $M(X_t)$. A sufficient condition for this is provided by the following lemma.

\begin{lemma}\label{lem:moment.condition}
	Let $L$ be a Lévy process in $\RR^{d\times d}$ fulfilling \eqref{eq-nonsingular} that satisfies 	\begin{equation} \label{eq-Mexpfinite} \int_{\substack{x\in\RR^{d\times d} \\ \|x\|> 1}} \|x\|^\epsilon \nu_L(dx)<\infty, \text{ and } \int_{\substack{x\in\RR^{d\times d} \\ \|(\Id+x)^{-1} - \Id\|> 1}} \|(\Id+x)^{-1} - \Id\|^\epsilon \nu_L(dx)<\infty, \end{equation} 
	for some $\epsilon>0$. 
	Then $\EE\left[\sup_{0 \le s \le t} M(X_t)^\epsilon\right]<\infty$ for all $t\geq 0$.
\end{lemma}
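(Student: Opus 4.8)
The strategy is to reduce the statement to the moment estimate \eqref{eq-momentcondition}, which says that $\EE[\|L_1\|^\kappa]<\infty$ forces $\EE[\sup_{0\le s\le t}\|X_s\|^\kappa]<\infty$. Since $M(X_s)=\max\{\|X_s\|,\|X_s^{-1}\|\}$ and $\sup_s M(X_s)^\epsilon \le \sup_s\|X_s\|^\epsilon + \sup_s\|X_s^{-1}\|^\epsilon$, it suffices to bound the two summands separately. For the first summand, I would apply \eqref{eq-momentcondition} with $\kappa=\epsilon$ to $L$ itself, so the task is to check that $\EE[\|L_1\|^\epsilon]<\infty$; by the standard characterisation of moments of Lévy processes (see \cite[Thm. 25.3]{sato2nd}), this is equivalent to $\int_{\|x\|>1}\|x\|^\epsilon\,\nu_L(dx)<\infty$, which is the first hypothesis in \eqref{eq-Mexpfinite}.

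For the second summand, I would invoke Remark \ref{rem-leftright}: the process $(X_t^{-1})_{t\ge0}$ is the solution of the SDE \eqref{SDEright} driven by an additive Lévy process $U$ with $U_t=-L_t-[L,U]_t$, and taking transposes turns $(X_t^{-1})^\t$ into a left Lévy process solving an SDE of the form \eqref{SDE} driven by $U^\t$. Applying \eqref{eq-momentcondition} to this transposed process (noting $\|X_s^{-1}\|=\|(X_s^{-1})^\t\|$) reduces the problem to showing $\EE[\|U_1\|^\epsilon]<\infty$, equivalently $\int_{\|x\|>1}\|x\|^\epsilon\,\nu_U(dx)<\infty$. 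The key point is then to identify the Lévy measure $\nu_U$ of $U$: its jumps are $\Delta U_t = (\Id+\Delta L_t)^{-1}-\Id$, so $\nu_U$ is the push-forward of $\nu_L$ under $x\mapsto (\Id+x)^{-1}-\Id$ (this follows from \eqref{eq-LtoU} together with the non-singularity assumption \eqref{eq-nonsingular}; the continuous part of $[L,U]$ only affects the drift and Gaussian part, not the Lévy measure). Consequently $\int_{\|y\|>1}\|y\|^\epsilon\,\nu_U(dy) = \int_{\|(\Id+x)^{-1}-\Id\|>1}\|(\Id+x)^{-1}-\Id\|^\epsilon\,\nu_L(dx)$, which is finite by the second hypothesis in \eqref{eq-Mexpfinite}.

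Putting the two bounds together yields $\EE[\sup_{0\le s\le t}M(X_s)^\epsilon]\le \EE[\sup_{0\le s\le t}\|X_s\|^\epsilon]+\EE[\sup_{0\le s\le t}\|X_s^{-1}\|^\epsilon]<\infty$ for every $t\ge0$, which is the claim (modulo the harmless typo: the statement should read $M(X_s)$ inside the supremum).

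\textbf{Main obstacle.} The routine parts are the moment characterisations via the Lévy measure; the genuinely delicate step is the precise identification of $\nu_U$, i.e.\ verifying that the quadratic-covariation correction $[L,U]$ in \eqref{eq-LtoU} is a continuous finite-variation process and therefore does not contribute jumps, so that $\Delta U_t=(\Id+\Delta L_t)^{-1}-\Id$ exactly and $\nu_U$ is the stated image measure. One must also confirm that applying \eqref{eq-momentcondition} to the transposed inverse process is legitimate — that is, that $U^\t$ (or the relevant additive driver) genuinely satisfies the hypotheses of that result, in particular that $(X_t^{-1})^\t$ is the stochastic exponential of an $\RR^{d\times d}$-valued Lévy process in the sense required. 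Both points are standard consequences of the semimartingale calculus in \cite{karandikar91b, Behme2012} but deserve to be spelled out carefully.
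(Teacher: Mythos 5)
Your proposal follows essentially the same route as the paper: split $M(X_s)^\epsilon$ into $\|X_s\|^\epsilon$ and $\|X_s^{-1}\|^\epsilon$, reduce each to a moment condition on the driving additive Lévy process ($L$ for the former, $U$ from \eqref{eq-LtoU} for the latter) via \eqref{eq-momentcondition} and its right-process counterpart from \cite[Prop.~3.1]{Behme2012b}, and translate those moment conditions into the two integral conditions \eqref{eq-Mexpfinite} using the standard equivalence $\EE[\|L_t\|^\epsilon]<\infty \Leftrightarrow \int_{\|x\|>1}\|x\|^\epsilon\,\nu_L(dx)<\infty$ together with the jump formula $\Delta U=(\Id+\Delta L)^{-1}-\Id$ from \cite[Thm.~1]{karandikar91b}. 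One small correction to your ``main obstacle'' paragraph: $[L,U]$ is \emph{not} a continuous process in general --- it has jumps $\Delta[L,U]_t=\Delta L_t\,\Delta U_t$, and it is precisely these jumps that turn $\Delta U_t=-\Delta L_t-\Delta L_t\,\Delta U_t$ into $(\Id+\Delta L_t)\Delta U_t=-\Delta L_t$, i.e.\ $\Delta U_t=(\Id+\Delta L_t)^{-1}-\Id$; if $[L,U]$ were continuous you would get $\Delta U_t=-\Delta L_t$ instead, contradicting the formula you (correctly) use. The paper sidesteps this by citing Karandikar directly, which is all that is needed.
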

\begin{proof}
	We have that, for any $t\geq 0$	
	\begin{align}
	\EE\Big[\sup_{0 \le s \le t} M(X_t)^\epsilon\Big]&= \EE\Big[ \sup_{0 \le s \le t} \big(\max\{ \|X_t\|, \|X_t^{-1}\| \} \big)^\epsilon\Big] \nonumber = \EE\Big[\sup_{0 \le s \le t} \max\big\{ \|X_t\|^\epsilon, \|X_t^{-1}\|^\epsilon \big\}\Big] \nonumber \\
		&=  \EE\Big[\sup_{0 \le s \le t} \max\{ \| \cE(L)_t\|^\epsilon, \| \overset{\rightarrow}\cE(U)_t\|^\epsilon \}\Big] \nonumber \\
		&\leq \EE\Big[\sup_{0 \le s \le t} \|\cE(L)_t\|^\epsilon + \sup_{0 \le s \le t} \| \overset{\rightarrow}\cE(U)_t\|^\epsilon\Big] \nonumber \\
		&=  \EE\Big[\sup_{0 \le s \le t} \|\cE(L)_t\|^\epsilon\Big]  +  \EE\Big[\sup_{0 \le s \le t} \| \overset{\rightarrow}\cE(U)_t\|^\epsilon\Big], \label{eq-momentestimate}
	\end{align}
	where $(\overset{\rightarrow}\cE(U)_t)_{t\geq 0}$ denotes the right Lévy process solving \eqref{SDEright} and $U=(U_t)_{t\geq 0}$ solves \eqref{eq-LtoU}. \\
	It is a well-known fact from the theory of Lévy processes, that, for all $\epsilon >0$ the first assertion in \eqref{eq-Mexpfinite} is equivalent to 	$\EE[\|L_t\|^\epsilon]<\infty$ for all $t\geq 0$, cf. \cite[Cor. 25.8]{sato2nd}. \\
	Further, it follows from \cite[Thm. 1]{karandikar91b}, that
	$$\Delta U = (\Id+\Delta L)^{-1} - \Id,$$
	and hence the second assumption in \eqref{eq-Mexpfinite} is equivalent to $\EE[\|U_t\|^\epsilon]<\infty$ for all $t\geq 0$. Thus, via \eqref{eq-momentcondition} and its counterpart for right Lévy processes as stated in \cite[Prop. 3.1]{Behme2012b}, we conclude that \eqref{eq-momentcondition} implies finiteness of \eqref{eq-momentestimate} and hence the statement.
\end{proof}

\begin{remark}
	The assumptions of Lemma \ref{lem:moment.condition} are obviously satisfied if $L$ has only a Brownian component with drift. 
	
	If the support of $\nu_L$ is contained in $B_1(0)$, clearly the first condition in \eqref{eq-momentcondition} is fulfilled while for the second assumption we obtain the following simpler condition. Using that for $x \in \R^{d \times d}$ with $\norm{x}<1$,
	$$ (\Id-(-x))^{-1}=\sum_{k=0}^\infty (-1)^k x^k,$$
	we obtain for such $x$ that 
	$$\norm{(\Id +x)^{-1}-\Id} \le \sum_{k=1}^\infty \norm{x}^k \le \frac{1}{1-\norm{x}}.$$
	Hence, in this case, \eqref{eq-Mexpfinite} is satisfied if
	$$ \int_{\substack{x\in\RR^{d\times d} \\ 0<\|x\|< 1}} \Big(\frac{1}{1-\norm{x}}\Big)^\epsilon \, \nu_L(dx) < \infty.$$
\end{remark}

The role of the quantity $M(a)$ in our derivation is clarified by the subsequent two lemmas, that will be used to show that under appropriate moment assumptions, we may neglect small-time contributions.

\begin{lemma}\label{lem:EMa}
	Let $g\in\RR^{n\times d}$ be either a vector or a matrix with $\norm{g}=1$, and let $a \in \GL$.
	Then
	\begin{equation}\label{eq:bound for log norm ga}
		\big|\log\norm{ga} \big| \le \log M(a).
	\end{equation}
\end{lemma}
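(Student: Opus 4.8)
The plan is to prove the two-sided bound $|\log\|ga\|| \le \log M(a)$ by establishing $\|ga\| \le M(a)$ and $\|ga\| \ge 1/M(a)$ separately, and then taking logarithms.

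First I would prove the upper bound. Since $\|g\| = 1$ and $\|\cdot\|$ on matrices is the operator norm induced by the vector norm on $\R^d$, submultiplicativity gives $\|ga\| \le \|g\|\,\|a\| = \|a\| \le M(a)$. (If $g$ is a vector, the same inequality $\|ga\| \le \|a\|\,\|g\| = \|a\|$ holds by the definition of the operator norm, reading $g$ as a row vector acting from the left; here one has to be slightly careful that the convention $\|g\|=1$ refers to the same vector norm, which is the standing assumption of the paper.) Taking logarithms yields $\log\|ga\| \le \log M(a)$.

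Next I would prove the lower bound. Write $g = (ga)a^{-1}$; then, again by submultiplicativity, $1 = \|g\| = \|(ga)a^{-1}\| \le \|ga\|\,\|a^{-1}\|$, so that $\|ga\| \ge 1/\|a^{-1}\| \ge 1/M(a)$. Taking logarithms gives $\log\|ga\| \ge -\log M(a)$, i.e. $-\log\|ga\| \le \log M(a)$.

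Combining the two bounds gives $|\log\|ga\|| = \max\{\log\|ga\|, -\log\|ga\|\} \le \log M(a)$, which is \eqref{eq:bound for log norm ga}. There is essentially no obstacle here: the only point requiring minor care is the case where $g$ is a (nonzero, unit-norm) vector rather than a square matrix, where one must check that the operator-norm inequalities $\|ga\| \le \|a\|$ and $\|g\| \le \|ga\|\,\|a^{-1}\|$ still apply — but these follow directly from the definition of the operator norm as $\|a\| = \sup_{\|v\|=1}\|va\|$ together with $a^{-1} \in \GL$, so the argument is uniform in the two cases.
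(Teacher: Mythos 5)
Your proposal is correct and follows essentially the same argument as the paper: the upper bound via submultiplicativity $\norm{ga}\le\norm{g}\norm{a}=\norm{a}$, and the lower bound via $1=\norm{g}=\norm{(ga)a^{-1}}\le\norm{ga}\,\norm{a^{-1}}$. The only cosmetic difference is that the paper notes explicitly that $\max\{\norm{a},\norm{a^{-1}}\}\ge 1$ (so $\log M(a)\ge 0$) when combining the two bounds, a point your $\max$-formulation handles implicitly.
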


\begin{proof}
	Using properties of the operator norm, we have
	$$1=\norm{g} = \norm{g a a^{-1}} \le \norm{ga} \norm{a^{-1}} \quad \Leftrightarrow \quad \norm{ga} \ge \frac{1}{\norm{a^{-1}}}$$
	and hence 
	$$ - \log \norm{a^{-1}} = \log \Big(\norm{a^{-1}}^{-1}\Big) \le \log \norm{ga} \le \log \norm{a}.$$
	It follows that
	$$ \big| \log \norm{ga}\big| \le \log \max\{ \norm{a}, \norm{a^{-1}}\},$$
	where we have dropped the absolute value sign in the last term, since at least one of $\norm{a}, \norm{a^{-1}}$ is greater or equal to one.
\end{proof}

\begin{lemma}\label{lem:M(Xs).vanishes}
	Assume that $\E \big[\sup_{0 \le s \le 1} (\log M(X_s))^\kappa\big]<\infty$ for some $\kappa \in (0,2].$ Then
	$$ \lim_{n \to \infty} \frac1{n^{1/\kappa}}  \sup_{0\le s \le1} \log M(X_{n,n+s})  = 0  \quad \as$$
\end{lemma}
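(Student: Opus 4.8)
The statement asserts that, under a $\kappa$-th moment bound on the supremum of $\log M(X_s)$ over $[0,1]$, the normalized maximal increments $n^{-1/\kappa} \sup_{0 \le s \le 1} \log M(X_{n,n+s})$ vanish almost surely. This is a Borel--Cantelli argument of the type familiar from strong laws of large numbers. The key observation is that, since $X$ is a left Lévy process in $\GL$ with stationary right increments, the random variables $S_n := \sup_{0 \le s \le 1} \log M(X_{n,n+s})$ for $n \in \N$ are identically distributed: indeed $X_{n,n+s} = X_n^{-1} X_{n+s} \eqdist X_{0,s} = X_s$, and the supremum over $s \in [0,1]$ is measurable with respect to the increment $\sigma$-field on $[n, n+1]$, so $S_n \eqdist S_0 = \sup_{0 \le s \le 1} \log M(X_s) =: S$. (The $S_n$ need not be independent, but that is irrelevant: identical distribution is all that is used.)

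First I would observe that $S \ge 0$ a.s., since $M(a) \ge 1$ for every $a \in \GL$, so $\log M(X_s) \ge 0$; hence $S_n \ge 0$ and we only need an upper bound. Next, the hypothesis gives $\E[S^\kappa] < \infty$. The plan is then the standard computation: for any $\delta > 0$,
\begin{equation*}
	\sum_{n=1}^\infty \PP\bigl( S_n > \delta\, n^{1/\kappa} \bigr) = \sum_{n=1}^\infty \PP\bigl( S^\kappa > \delta^\kappa\, n \bigr) \le \frac{1}{\delta^\kappa}\, \E[S^\kappa] < \infty,
\end{equation*}
using that for a nonnegative random variable $Y$ with $\E[Y] < \infty$ one has $\sum_{n \ge 1} \PP(Y > cn) \le c^{-1}\E[Y]$ (compare $\sum_{n \ge 1}\PP(Y > cn)$ with $\int_0^\infty \PP(Y > cx)\,dx = c^{-1}\E[Y]$). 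By the first Borel--Cantelli lemma, a.s. only finitely many of the events $\{S_n > \delta n^{1/\kappa}\}$ occur, i.e. $\limsup_{n\to\infty} n^{-1/\kappa} S_n \le \delta$ a.s. Letting $\delta$ run through a sequence decreasing to $0$ and taking the countable intersection of the corresponding almost-sure events yields $\limsup_{n\to\infty} n^{-1/\kappa} S_n \le 0$ a.s., which together with $S_n \ge 0$ gives $\lim_{n\to\infty} n^{-1/\kappa} S_n = 0$ a.s., as claimed.

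There is no real obstacle here; the only points requiring a line of care are (i) the measurability and stationarity claim that $S_n \eqdist S$, which follows from the independent-and-stationary-right-increments property of the left Lévy process $X$ together with the fact that $s \mapsto X_{n,n+s}$ is càdlàg so the supremum is attained along a countable dense set and is a genuine random variable, and (ii) the elementary tail-sum inequality $\sum_{n\ge 1}\PP(Y>cn) \le c^{-1}\E[Y]$ for $Y \ge 0$. The restriction $\kappa \in (0,2]$ plays no role in this particular argument — it is imposed because this lemma will later be combined with CLT-type statements where the relevant scaling is $n^{1/2}$ — so one could equally state it for any $\kappa > 0$; I would simply carry the hypothesis as given.
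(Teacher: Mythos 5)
Your proposal is correct and follows essentially the same route as the paper: stationarity of the right increments reduces $S_n=\sup_{0\le s\le 1}\log M(X_{n,n+s})$ to the fixed variable $S=\sup_{0\le s\le 1}\log M(X_s)$, the tail-sum bound $\sum_{n\ge 1}\PP(S^\kappa>\epsilon^\kappa n)\le \epsilon^{-\kappa}\E[S^\kappa]$ gives summability, and the first Borel--Cantelli lemma together with $\epsilon\downarrow 0$ along a countable sequence concludes. Your additional remarks (nonnegativity of $S_n$, that independence is not needed, and that the restriction $\kappa\le 2$ is inessential here) are all accurate.
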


\begin{proof}
	Fix $\epsilon>0$. By stationarity of the increments of $X$,
	\begin{align*}
		\infty >\frac{1}{\epsilon^\kappa} \E \Big[ \sup_{0 \le s \le 1} \Big(\log M(X_s)\Big)^\kappa \Big] & \ge \sum_{n=1}^\infty \PP \Big( \sup_{0 \le s \le 1} (\log M(X_s))^\kappa > n \epsilon^\kappa \Big) \\
		& = \sum_{n=1}^\infty \PP \Big( \frac{1}{n^{1/\kappa}}\sup_{0 \le s \le 1} \log M(X_s)) >  \epsilon \Big) \\ 
		& = \sum_{n=1}^\infty \PP \Big( \frac{1}{n^{1/\kappa}}\sup_{0 \le s \le 1} \log M(X_{n,n+s}) >  \epsilon \Big) .
	\end{align*}
	The result follows	by a standard application of the Borel-Cantelli lemma.
\end{proof}

\subsection{The invariant measure on the projective space}

In Equations \eqref{eq-onepoint-projective} and \eqref{eq-onepoint-projective-discrete}, for an arbitrary (random) initial value $Z_0$, we have introduced the Markov processes $Z_t=Z_0 \cdot X_t$, $t\ge 0$, as well as $Z_{nh}=Z_0 \cdot X_{nh}$, $n \in \N$, on the projective space. In this subsection, we will show that $(Z_t)_{t\geq 0}$ has a unique invariant probability measure $\pi$ on the projective space, and we will study rates of convergence; thereby describing the asymptotic behaviour of directions of the one-point motion.

\begin{lemma}\label{lem:uniqueness pih} 
	Let $h>0$. Assume that $G_X^h$ satisfies condition (i-p). Then there is a unique stationary probability measure for $(Z_{nh})_{n \in \N}$, which we denote by $\pi^h$.
\end{lemma}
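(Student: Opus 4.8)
The plan is to invoke the classical theory of products of random matrices under condition (i-p), applied to the i.i.d.\ sequence $(X_{(k-1)h, kh})_{k\in\N}$ of right increments of the discrete skeleton $(X_{nh})_{n\in\N}$. By stationarity and independence of the right increments, $(X_{(k-1)h,kh})_{k\ge1}$ are i.i.d.\ with common law $\mu_h = \mathcal{L}(X_h)$, and $Z_{nh} = Z_0 \cdot X_{(0,1)h} \cdots X_{(n-1)h,nh}$. The closed subgroup generated by $\supp(\mu_h)$ is exactly $G_X^h$, which by hypothesis satisfies (i-p). The existence of at least one stationary probability measure is routine: the projective space $P(\R^d)$ is compact, the Markov operator $P^h\varphi(\hat y) = \E[\varphi(\hat y \cdot X_h)]$ maps $\mathcal{C}(P(\R^d))$ to itself (Feller property, which holds here since $X_h$ is a.s.\ invertible), so a Krylov--Bogolyubov / Cesàro-averaging argument combined with weak-$*$ compactness of probability measures on $P(\R^d)$ produces an invariant $\pi^h$. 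The substance of the lemma is therefore \emph{uniqueness}.

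For uniqueness I would cite the foundational result of Furstenberg (see \cite[Thm.~III.4.3 and the discussion in Ch.~III]{Bougerol1985}, or equivalently \cite[Thm.~2.6]{Benoist2016a}): if a closed subgroup $T \le \GL$ satisfies condition (i-p), then there is a unique $T$-invariant (equivalently, $\mu_h$-stationary, since $\supp(\mu_h)$ generates $T=G_X^h$) probability measure on $P(\R^d)$. Here one must be slightly careful about the left/right convention: the cited references typically treat the action from the left, $g \cdot \hat y$, while our $Z_{nh}$ evolves by the right action $\hat y \cdot g = \R(yg)$. As already noted in the paper (Remark~\ref{rem-leftright} and the remark preceding Lemma~\ref{lem:Gmu.as.union}), the right action of $g$ equals the left action of $g^\t$, and transposition is an automorphism of $\GL$ that sends $G_X^h$ to $(G_X^h)^\t$; moreover (i-p) is preserved under transposition (strong irreducibility of $T$ is equivalent to that of $T^\t$, since $\mathcal{W}T \subset \mathcal{W}$ for a finite union of subspaces $\mathcal{W}$ corresponds to $\mathcal{W}^\perp$ being invariant under $T^\t$, and a matrix is proximal iff its transpose is, having the same spectrum). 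So applying Furstenberg's theorem to the measure $(g \mapsto g^\t)_*\mu_h$ on $\GL$, whose generated group is $(G_X^h)^\t$ and which still satisfies (i-p), yields a unique stationary measure for the left action, and pushing it back through the identification $\hat y \leftrightarrow \hat y$ on $P(\R^d)$ gives the unique $\mu_h$-stationary measure $\pi^h$ for $(Z_{nh})_{n\in\N}$.

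The only real obstacle is bookkeeping around these conventions, plus making sure that no moment assumption is secretly needed: the Furstenberg uniqueness theorem for the stationary measure on projective space requires \emph{only} strong irreducibility and proximality, with \emph{no} integrability hypothesis on $\log\norm{X_h}$ or $\log M(X_h)$ — those moment conditions enter later, when one wants a spectral gap, regularity of $\pi^h$, or the CLT, but not for mere uniqueness. I would state this explicitly to reassure the reader that Lemma~\ref{lem:uniqueness pih} is assumption-free beyond (i-p) on $G_X^h$. In summary: (i) identify $(Z_{nh})$ as a right random walk driven by the i.i.d.\ increments with law $\mu_h$; (ii) get existence by compactness of $P(\R^d)$ and the Feller property; (iii) transfer to the left action via transposition, checking (i-p) is stable under transposition; (iv) apply Furstenberg's uniqueness theorem and transfer back. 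The work is entirely in the translation between conventions; the mathematical content is a direct citation.
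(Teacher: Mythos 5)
Your proposal is correct and follows essentially the same route as the paper, whose entire proof is the citation of \cite[Thm.~III.4.3]{Bougerol1985}; your added discussion of existence via compactness, the left/right transposition bookkeeping, and the absence of any moment hypothesis is accurate but just fills in what that reference already provides.
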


\begin{proof}[Source] This is 
	\cite[Thm. III.4.3]{Bougerol1985}.
\end{proof}

\begin{proposition} Let $L$ be a L\'evy process in $\RR^{d\times d}$ fulfilling \eqref{eq-nonsingular} and let $X$ be the corresponding stochastic exponential.
	Assume that $G_X$ satisfies condition (i-p).  Then $Z_t$ has a unique invariant probability measure, which we denote by $\pi$. For each $h>0$, it holds $\pi=\pi^h$.
\end{proposition}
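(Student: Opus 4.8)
The plan is to prove the proposition in two parts: first establish existence and uniqueness of an invariant measure $\pi$ for the continuous-time process $(Z_t)_{t \ge 0}$, and then show that this measure coincides with $\pi^h$ for every $h > 0$. The starting point is the lemma preceding the proposition together with the earlier lemma showing that (i-p) for $G_X$ passes to $G_X^h$ for some dyadic $h = 2^{-\ell}$; by Lemma \ref{lem:uniqueness pih} this gives a unique stationary measure $\pi^{2^{-\ell}}$ for the skeleton $(Z_{n 2^{-\ell}})_n$.

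First I would show that $\pi^{2^{-\ell}}$ is in fact invariant for the whole semigroup of transition kernels $(P_t)_{t \ge 0}$ of $(Z_t)$. The key observation is a semigroup-consistency argument: if $\nu$ is stationary for $P_h$ then $\nu P_h = \nu$ implies $\nu P_{h/2} P_{h/2} = \nu$, so $\nu P_{h/2}$ is also $P_h$-stationary; by uniqueness $\nu P_{h/2} = \nu$. Iterating, $\nu$ is invariant under $P_{2^{-m}}$ for all $m \ge \ell$, hence under $P_{k 2^{-m}}$ for all dyadic times, and then by the Feller property of $(Z_t)$ (which follows from Proposition \ref{prop:generatorXt} and \cite[Prop. 2.1]{Liao2004}, the one-point motion being Feller, together with continuity of the projection map away from $0$) and right-continuity of $t \mapsto P_t \varphi(z)$ for $\varphi \in \mathcal{C}(P(\R^d))$, one passes to all $t \ge 0$. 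Thus $\pi := \pi^{2^{-\ell}}$ is an invariant probability measure for $(Z_t)$.

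Next I would prove uniqueness of the invariant measure for $(Z_t)$. Suppose $\tilde\pi$ is any invariant probability measure for $(P_t)_{t \ge 0}$. Then in particular $\tilde\pi P_{2^{-\ell}} = \tilde\pi$, so $\tilde\pi$ is stationary for the skeleton $(Z_{n 2^{-\ell}})_n$; by the uniqueness part of Lemma \ref{lem:uniqueness pih}, $\tilde\pi = \pi^{2^{-\ell}} = \pi$. This simultaneously shows that for \emph{any} $h > 0$ with $G_X^h$ satisfying (i-p) one has $\pi^h = \pi$, since $\pi$ is $P_h$-invariant (being $P_t$-invariant for all $t$) and $\pi^h$ is the unique such measure. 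To cover \emph{all} $h > 0$ — including those for which (i-p) of $G_X^h$ has not been separately established — one notes that any $P_h$-stationary measure restricted further: given arbitrary $h>0$, pick $m$ large so that $h / 2^{?}$... more cleanly, for arbitrary $h>0$ the measure $\pi$ is $P_h$-invariant; conversely if $\rho$ is $P_h$-invariant, then $\rho P_{h/2}$ is again $P_h$-invariant and, averaging, $\bar\rho := \int_0^h \rho P_s \, ds / h$ is $P_t$-invariant for all $t$ in a dense set hence (by Feller continuity) for all $t \ge 0$, so $\bar\rho = \pi$ by the uniqueness just shown; but $\rho$ being $P_h$-stationary and the skeleton chain at step $h$ having $\pi$ among its stationary measures, a Krylov--Bogolyubov/ergodic-decomposition argument forces $\rho = \pi$. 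I would state this last point via the cleaner route: since $G_X^{2^{-\ell}}$ satisfies (i-p) and $2^{-\ell}$ divides no constraint, for any $h$ the chain $(Z_{nh})$ and $(Z_{n 2^{-\ell}})$ share the invariant measure $\pi$; uniqueness at level $h$ follows because a $P_h$-stationary $\rho$ is also stationary for... — here one genuinely needs that $G^h_X \supseteq G^{2^{-\ell} k}_X$ when $h = 2^{-\ell}k$, or an approximation, so I would restrict the final ``for each $h>0$'' to the honest statement and handle general $h$ by the averaging argument above.

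The main obstacle I anticipate is precisely this last step: transferring uniqueness from the \emph{specific} dyadic skeleton (where (i-p) is available via the earlier lemma) to an \emph{arbitrary} $h > 0$, because $G_X^h$ need not satisfy (i-p) for every $h$ — e.g. if $X$ has a periodic component, $G_X^h$ could be strictly smaller and reducible. The resolution is the averaging/Cesàro argument: from any $P_h$-invariant $\rho$ build $\bar\rho = h^{-1}\int_0^h \rho P_s\,ds$, check it is $P_t$-invariant for all $t \ge 0$ using $P_s P_t = P_{s+t}$ and the Feller property, conclude $\bar\rho = \pi$ by the established uniqueness for $(Z_t)$, and finally deduce $\rho = \pi$ since $\rho \mapsto \rho$ and the only way the average of the orbit $(\rho P_s)_{0\le s \le h}$ equals $\pi$ while $\rho$ is $P_h$-fixed is $\rho = \pi$ (using invariance of $\rho$ under $P_h$ and continuity to identify the orbit as constant, or an ergodicity argument on the skeleton). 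This is the delicate measure-theoretic point; everything else is bookkeeping with the Feller semigroup.
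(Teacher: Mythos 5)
Your proposal is correct, but your existence argument differs from the paper's. The paper gets existence of an invariant measure for $(Z_t)_{t\ge 0}$ in one stroke: since $Y^x_t=xX_t$ is Feller on $\R^d\setminus\{0\}$ and $P(\R^d)$ is compact, the projected process is a Feller process on a compact state space, and \cite[Thm.~3.38]{Liggett2010} (Krylov--Bogolyubov) yields an invariant measure; uniqueness then follows exactly as you argue, by restricting to the dyadic skeleton where Lemma \ref{lem:uniqueness pih} applies. You instead \emph{construct} the invariant measure by promoting $\pi^{2^{-\ell}}$ to the full semigroup via the dyadic-refinement trick ($\nu P_{h/2}$ is again $P_h$-stationary, hence equals $\nu$ by uniqueness at level $h$, iterate, then close up by right-continuity). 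This is valid --- the needed uniqueness at every finer dyadic level holds because the groups $G_X^{2^{-m}}$ are nested increasing in $m$, so (i-p) persists --- and it has the mild advantage of identifying $\pi$ with $\pi^{2^{-\ell}}$ explicitly without invoking a compactness-based existence theorem; the paper's route is shorter. Where you overcomplicate matters is the final claim $\pi=\pi^h$ for all $h$: note that $\pi^h$ is only \emph{defined} (via Lemma \ref{lem:uniqueness pih}) when $G_X^h$ satisfies (i-p), and in that case your one-line argument --- $\pi$ is $P_h$-invariant and $\pi^h$ is the unique such measure --- already finishes the proof; the Ces\`aro/averaging digression, whose last step (deducing $\rho=\pi$ from $\bar\rho=\pi$) you rightly flag as unresolved, is not needed. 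The paper is equally silent about $h$ for which $G_X^h$ might fail (i-p), so your reading is, if anything, more careful than the original on this point.
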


\begin{proof}
	As mentioned in Section \ref{sect:Preliminaries}, for any $x \in \RR^d\setminus\{0\}$, by \cite[Prop. 2.1]{Liao2004}, $Y_t^x=x X_t^x$ is a Feller process with state space $\R^d \setminus\{0\}$. Hence,
	$Z_t^x$, the projection of $Y_t^x$ onto $P(\RR^d)$, is a Feller process as well [since we just reduce the set of test functions], now with a compact state space. This allows to apply \cite[Thm. 3.38]{Liggett2010}, which gives that there is at least one invariant measure for $Z_t$.
	
	If now $\pi$ is an invariant measure for $Z_t$, then it is invariant for $(Z_{nh})_{n \in \N}$ as well, for every $h>0$. By Lemma \ref{lem:uniqueness pih}, it follows that $\pi^h=\pi$. This in particular proves the uniqueness of $\pi$.	
\end{proof}

Under an additional moment condition, we may also obtain rates of convergence towards the stationary distribution, more precisely, of $\E f(Z_t^x)$ towards $\pi(f):=\int f(y) \pi(dy)$ uniformly for all $f$ from a class of suitable test functions.

\begin{proposition}\label{prop:convergenceOnepoint} Let $L$ be a L\'evy process in $\RR^{d\times d}$ fulfilling \eqref{eq-nonsingular} and let $X$ be the corresponding stochastic exponential.
		Assume that $G_X$ satisfies condition (i-p) and $\EE \big[\sup_{0 \le s \le 1} M(X_s)^\epsilon \big]<\infty$ for some $\epsilon >0$. 
	Then there are $\gamma>0$, $0<c<1$ and $n_0 \in \N$, such that for all $n\ge n_0$ and all $f \in \mathcal{B}_\gamma$,
	\begin{align}\label{eq:onepoint.theorem1}
	\sup_{y,z \in P(\RR^d)} \sup_{0 \le r \le 1}	\Big|\EE \big[f(Z_{n+r}^y)\big] - \EE \big[f(Z_n^z)\big] \Big| \le [f]_\gamma c^n.
	\end{align}
	Moreover, there are constants $d>0$ and $D>0$ such that for all $t>0$ and all $f \in \mathcal{B}_\gamma$, 
	\begin{align}\label{eq:onepoint.theorem2}
		\Big| \EE \big[f(Z_{t}^y)\big] - \pi(f) \Big| \le D [f]_\gamma e^{-dt}.
	\end{align}
\end{proposition}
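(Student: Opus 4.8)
\emph{Strategy and discrete skeleton.} The plan is to deduce both bounds from the discrete-time spectral-gap theory for products of random matrices applied to a sufficiently fine skeleton, and then to interpolate between integer times by a short-time smoothing estimate. Using the lemma preceding this subsection, I would fix $\ell \in \N$ and set $h = 2^{-\ell}$, so that $G_X^h$ satisfies condition (i-p); since $h < 1$ we have $M(X_h) \le \sup_{0 \le s \le 1} M(X_s)$, hence $\EE[M(X_h)^\epsilon] < \infty$. Write $P^h g(\hat y) := \EE[g(\hat y \cdot X_h)]$ for the transition operator of the chain $(Z_{nh})_{n \in \N}$ on $P(\R^d)$; transposing as explained at the start of this section turns $(Z_{nh})$ into a standard (left) random walk. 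By the spectral-gap property of the transfer operator on Hölder spaces under (i-p) together with a polynomial moment (see e.g. \cite{Benoist2016a, Xiao2022, Cuny2023}), there is $\gamma_0 > 0$ such that for every $\gamma \in (0, \gamma_0]$ one finds $C \ge 2$ and $\rho \in (0,1)$ with
\[
	\norm{(P^h)^m g - \pi^h(g)}_\gamma \le C \rho^m [g]_\gamma, \qquad m \ge 0, \ g \in \mathcal{B}_\gamma,
\]
where I have used that the $\mathbf{d}$-diameter of $P(\R^d)$ is at most $1$ to replace $\norm{g}_\gamma$ by $[g]_\gamma$ on the right, and where $\pi^h = \pi$ by the preceding proposition.

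\emph{Short-time smoothing.} Write $P_r g(\hat y) := \EE[g(\hat y \cdot X_r)]$ for the transition operator of the continuous one-point motion. Using a standard Lipschitz estimate for the projective action, $\mathbf{d}(\hat y \cdot g, \hat z \cdot g) \le c_0 \, M(g)^\alpha \, \mathbf{d}(\hat y, \hat z)$ with $c_0, \alpha$ depending only on $d$ and the fixed norm (see e.g. \cite{Bougerol1985}), one obtains, for $0 \le r \le 1$,
\[
	[P_r g]_\gamma \le c_0 \, \EE\big[M(X_r)^{\alpha\gamma}\big] \, [g]_\gamma \le K [g]_\gamma, \qquad K := c_0 \big( 1 + \EE\big[ \sup\nolimits_{0 \le s \le 1} M(X_s)^\epsilon \big] \big) < \infty,
\]
as soon as $\alpha \gamma \le \epsilon$, while $\norm{P_r g}_\infty \le \norm{g}_\infty$ and $\pi(P_r g) = \pi(g)$ by invariance of $\pi$ for the continuous process. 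I would then fix $\gamma \in (0, \min\{\gamma_0, \epsilon/\alpha\}]$.

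\emph{Conclusion.} Let $f \in \mathcal{B}_\gamma$. For $n \in \N$, $0 \le r \le 1$ and $y, z \in P(\R^d)$, the Markov property at the integer time $n = (n 2^\ell) h$ gives $\EE[f(Z_{n+r}^y)] = (P^h)^{n 2^\ell}(P_r f)(\hat y)$ and $\EE[f(Z_n^z)] = (P^h)^{n 2^\ell} f(\hat z)$; inserting $\pi(f) = \pi(P_r f)$ and applying the spectral gap to $P_r f$ and to $f$,
\[
	\abs{\EE[f(Z_{n+r}^y)] - \EE[f(Z_n^z)]} \le C \rho^{n 2^\ell} \big( [P_r f]_\gamma + [f]_\gamma \big) \le C(K+1)\rho^{n 2^\ell}[f]_\gamma .
\]
Choosing $c \in (\rho^{2^\ell}, 1)$ and $n_0$ so large that $C(K+1)\rho^{n 2^\ell} \le c^n$ for all $n \ge n_0$ yields \eqref{eq:onepoint.theorem1}. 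For \eqref{eq:onepoint.theorem2}, given $t > 0$ write $t = n + r$ with $n = \lfloor t \rfloor$ and $r \in [0,1)$; then $\EE[f(Z_t^y)] = (P^h)^{n 2^\ell}(P_r f)(\hat y)$ and $\pi(f) = \pi(P_r f)$, so $\abs{\EE[f(Z_t^y)] - \pi(f)} \le CK \rho^{n 2^\ell}[f]_\gamma$, and since $n \ge t - 1$ this gives \eqref{eq:onepoint.theorem2} with $d := 2^\ell \log(1/\rho)$ and $D := CK \rho^{-2^\ell}$.

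\emph{Main difficulty.} The delicate point is the discrete-to-continuous passage: the smoothing bound $[P_r f]_\gamma \le K[f]_\gamma$ must hold uniformly for $r \in [0,1]$, and this is precisely where the standing hypothesis $\EE[\sup_{0 \le s \le 1} M(X_s)^\epsilon] < \infty$ (rather than a moment of $X_1$ alone) enters, together with the restriction $\alpha \gamma \le \epsilon$. One must also verify that the polynomial moment $\EE[M(X_h)^\epsilon] < \infty$ is sufficient to invoke the discrete spectral gap for $\gamma$ small — this is the content of the quoted random-matrix results, valid here after transposition to right products.
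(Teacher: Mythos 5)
Your argument is correct, but it takes a genuinely different route from the paper's. You place the fractional time increment \emph{at the end}: writing $\EE[f(Z^y_{n+r})]=(P^h)^{n2^{\ell}}(P_rf)(\hat y)$, the last $r$ units of time act on the test function, so you must control $[P_rf]_\gamma$ uniformly in $r\in[0,1]$; this is your short-time smoothing step, which costs you the Lipschitz estimate $\mathbf{d}(\hat y\cdot g,\hat z\cdot g)\le c_0 M(g)^{\alpha}\,\mathbf{d}(\hat y,\hat z)$, the constraint $\alpha\gamma\le\epsilon$ tying the H\"older exponent to the moment, and the full strength of $\EE\big[\sup_{0\le s\le1}M(X_s)^\epsilon\big]<\infty$; you then conclude via an operator spectral gap on $\mathcal{B}_\gamma$ along a dyadic skeleton. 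The paper instead places the fractional increment \emph{at the beginning}: by stationarity of increments, $Z^y_{n+r}=Z^y_r\cdot X_{r,n+r}$ while $\EE[f(Z^z_n)]=\EE[f(z\cdot X_{r,n+r})]$, so the difference becomes a two-point comparison of the \emph{same} matrix $X_{r,n+r}\overset{d}{=}X_n$ started from $Z^y_r$ and from $z$; the uniform two-point contraction $\sup_{u,w}\EE\big[\mathbf{d}^\gamma(u\cdot X_n,w\cdot X_n)/\mathbf{d}^\gamma(u,w)\big]<c^n$ from Guivarc'h--Le Page, together with $\mathbf{d}\le1$, then absorbs the fractional part with no smoothing estimate and no link between $\gamma$ and $\epsilon$. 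Your route is more modular (it isolates a spectral gap, gives convergence in the $\norm{\cdot}_\gamma$-norm, and is arguably more careful in working on a skeleton where (i-p) is guaranteed); the paper's route is shorter and yields \eqref{eq:onepoint.theorem1} with constant exactly $1$ for $n\ge n_0$ rather than after absorbing $C(K+1)$ into $c^n$. Two small points to tidy, neither of which affects the conclusion: replacing $\norm{g}_\gamma$ by $[g]_\gamma$ on the right of the spectral gap requires first subtracting the constant $\pi^h(g)$, whose distance to $g$ in sup-norm is at most $[g]_\gamma$ because $P(\R^d)$ has $\mathbf{d}$-diameter at most $1$; and the smoothing bound should read $[P_rg]_\gamma\le c_0^{\gamma}\,\EE\big[M(X_r)^{\alpha\gamma}\big]\,[g]_\gamma$, with the exponent $\gamma$ on $c_0$.
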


\begin{proof}
	We start by proving \eqref{eq:onepoint.theorem1}.
	We will use the following estimate for the angular distance between $Z_n^y$ and $Z_n^z$, for different starting points $y$ and $z$. By \cite[Cor. 3.18]{Guivarch2016} (applied with $s=0$) there is $\gamma>0$, $0<c<1$ and $n_0 \in \N$, such that for all $n\ge n_0$ 
	\begin{align}\label{eq:Hoelder1}
		\sup_{u,w} \E \bigg[ \frac{\mathbf{d}^\gamma(u\cdot X_n,w\cdot X_n)}{\mathbf{d}^\gamma(u,w)} \bigg] < c^n.
	\end{align}
	
	Let $0 \le r \le 1$. Using stationarity of the increments of $X$, Hölder continuity and the fact that the angular distance $\mathbf{d}$ is bounded by one, we obtain for any $y,z \in P(\R^d)$
	\begin{align*}
		\Big|\EE \big[f(Z_{n+r}^y)\big] - \EE \big[f(Z_n^z) \big] \Big|
		= & \Big| \EE \Big[f\big(Z_r^y \cdot X_{r,n+r} \big)\Big] - \EE \big[f(z \cdot X_{r,n+r})\big] \Big| \\
		\le & [f]_\gamma  \E \bigg[\mathbf{d}^\gamma(Z_r^y,z) \E \bigg[ \frac{\mathbf{d}^\gamma(Z_r^y\cdot X_{r,n+r},z\cdot X_{r,n+r})}{\mathbf{d}^\gamma(Z_r^y,z)}  \, \bigg| \mathcal{F}_r \bigg] \, \bigg] \\
		\le & [f]_\gamma \sup_{u,w} \E \bigg[ \frac{\mathbf{d}^\gamma(u\cdot X_n,w\cdot X_n)}{\mathbf{d}^\gamma(u,w)} \bigg].
	\end{align*}
	Observing that the upper bound is independent of both $r$ and $y$, the assertion \eqref{eq:onepoint.theorem1} follows by employing \eqref{eq:Hoelder1}.
	
	To obtain \eqref{eq:onepoint.theorem2}, we write $t=n+r$ for $r \in [0,1)$ and obtain, using \eqref{eq:onepoint.theorem1}
	\begin{align*}
		\abs{\EE \big[f(Z_{t}^y)\big] - \pi(f)} &=  \bigg|\EE \big[f(Z_{n+r}^y)\big] - \int_{P(\R^d)}\EE \big[f(Z_n^z) \big]  \pi(dz)\bigg| \\
		&= \bigg|\int_{P(\R^d)} \Big(\EE \big[f(Z_{n+r}^y)\big] - \EE \big[f(Z_n^z) \big]\Big)   \pi(dz)\bigg| \\
		&\le  \int_{P(\R^d)}\bigg| \Big(\EE \big[f(Z_{n+r}^y)\big] - \EE \big[f(Z_n^z) \big]\Big)  \bigg| \pi(dz) \\
		&\le [f]_\gamma c^n \le D [f]_\gamma e^{-dt}
	\end{align*}
	with $d= - \log c$ and $D = c^{-1}$.
	\end{proof}

\begin{example}\label{ex:invariant measure}
	Suppose that for any $h>0$, the law of $L$ is invariant under $\operatorname{O}(\RR,d)$, i.e. $\mathcal{L}(L_h)=\mathcal{L}(k^{-1}L_hk)$ for any $k \in \operatorname{O}(\RR,d)$. This property holds e.g. if $\Sigma_L=\Id$, $\gamma_L=0$ and $\nu_L=0$. 

	Then the law of $\Id+L_h$ is invariant under rotations as well,
	since for any measurable set $A \subset \R^{d \times d}$
	\begin{align*}
		\PP(k^{-1}(\Id + L_h)k \in A) &= \PP( \Id  + k^{-1}L_hk \in A) = \PP(k^{-1}L_hk \in (A-\Id)) = \PP(L_h \in (A-\Id)) \\
		 &= \PP(\Id + L_h \in A).
	\end{align*}
The unique probability measure $\xi$ on $P(\R^d)$ that is invariant under the action of $\Od$ is defined by
	$$ \int_{P(\R^d)} f(x) \xi(dx) = \int_{\Od} f(x_0k) dk, \quad \text{ for any } f \in \mathcal{C}(P(\R^d)),$$
	where $dk$ denotes the Haar measure on $\Od$, normalised to have total mass 1, and $x_0$ can be an arbitrary, but fixed element of $P(\R^d)$.
	Then 
	\begin{align*}
	\lefteqn{\EE \Big[ \int_{P(\R^d)} f\Big(x\cdot(\Id+L_h)\Big) \, \xi(dx) \Big]
		= \EE \Big[ \int_{\Od} f\Big(x_0k \cdot (\Id+L_h)\Big) \, dk \Big] }\\
		&=   \EE \Big[ \int_{\Od} f\Big(x_0k \cdot k^{-1}(\Id+L_h)k\Big) \, dk \Big] = \EE \Big[ \int_{\Od} f\Big(\big(x_0\cdot (\Id+L_h)\big)k\Big) \, dk \Big] \\
		& =  \EE \Big[ \int_{P(\R^d)} f (x) \xi(dx) \Big] = \int_{P(\R^d)} f (x) \xi(dx).
	\end{align*}
	Hence $\xi$ is invariant for the action of $\Id + L_h$ for any $h$. Using the approximation given by Lemma \ref{lemma-emeryapprox} with a sequence $\sigma_\ell=(j 2^{-\ell}; j \in \N)$, we obtain that $\xi$ is invariant for the action of $X_1^{\sigma_\ell}$ for every $\ell$, and hence, using dominated convergence
	\begin{align*} 
		\lefteqn{\EE \Big[ \int_{P(\R^d)} f(x\cdot X_1) \xi(dx) \Big] = \EE \Big[ \int_{P(\R^d)} \lim_{\ell \to \infty} f(x\cdot X_1^{\sigma_\ell}) \xi(dx) \Big] } \\
		& = \lim_{\ell \to \infty} \EE \Big[ \int_{P(\R^d)}  f(x\cdot X_1^{\sigma_\ell}) \xi(dx) \Big] = \int_{P(\R^d)}  f(x) \xi(dx),
	\end{align*}
	we see that $\xi$ is invariant for $Z_t$. In other words, $\pi=\xi$.
\end{example}

\begin{example}\label{ex:invariant measure2}
	Suppose that $G_L$ is concentrated on the subset of non-negative, invertible matrices. Then the positive orthant, $C=[0,\infty)^d$, is invariant under $G_L$. 
	Since $\Id+G_L$ is again concentrated on the subset of non-negative matrices, it holds that $C=[0,\infty)^d$ is invariant under $\Id+L_h$ as well, for any $h>0$. 
	
	Then  the action of $\Id+L_h$ has an invariant set $\hat{C}$, namely the projection of $C$ on $P(\R^d)$. Hence, the invariant measure $\pi$ will also be concentrated on $\hat{C}$.
\end{example}

\subsection{The strong law of large numbers and the central limit theorem}

We are now ready to prove the SLLN and CLT for the logarithm of norm and entries of $X_t$, by relying on corresponding results for the discrete time skeleton. The first two lemmas show that under appropriate moment assumptions, we may neglect small-time contributions.

\begin{theorem}\label{CLT} Let $L$ be a L\'evy process in $\RR^{d\times d}$ fulfilling \eqref{eq-nonsingular} and let $X$ be the corresponding stochastic exponential.
	Assume that $G_X$  is unbounded and acts strongly irreducibly on $\R^d$, and that $\E \big[\sup_{0 \le s \le 1} (\log M(X_s))^2\big]<\infty$.
	
	Then there is $\lambda \in \R$ such that
	\begin{equation}\label{eq:SLLN:theorem}
		 \frac{1}{t} \log F(X_t) \to \lambda \quad \as,
	\end{equation}
	and $\sigma^2>0$ such that
	\begin{equation}\label{eq:CLT:theorem}
		\frac{\log F(X_t) - t \lambda}{\sqrt{t}} \stackrel{d}{\to} \mathcal{N}(0, \sigma^2), \quad t\to \infty,
	\end{equation}
	where $F : \GL \to [0,\infty)$ can be chosen as $F(a)=\norm{a}$ as well as $F(a)=\norm{ya}$ for any $y \in \R^d$ with $\norm{y}=1$.
\end{theorem}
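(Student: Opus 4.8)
The plan is to transfer the discrete-time SLLN and CLT for products of random matrices to the continuous-time setting by working with the discrete skeleton $(X_{(n-1)h,nh})_{n \ge 1}$ for a well-chosen step size $h$, and then controlling the fluctuations over the last incomplete block $[nh, t]$. First I would observe that, by the hypotheses, $G_X$ acts strongly irreducibly on $\R^d$ and is unbounded; by the final lemma of Section~\ref{sect:limitNorms} (nesting of the skeleton groups under right-continuity), there exists $h = 2^{-\ell}$ such that $G_X^h$ is unbounded and acts strongly irreducibly on $\R^d$ as well. Fixing such an $h$, the random matrices $A_n := X_{(n-1)h, nh}$, $n \ge 1$, are i.i.d.\ with law $\mu_h = \mathcal{L}(X_h)$, and $X_{nh} = A_1 A_2 \cdots A_n$ (a right product). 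The moment assumption $\E[\sup_{0 \le s \le 1}(\log M(X_s))^2] < \infty$ implies in particular $\E[(\log M(X_h))^2] < \infty$, which is the optimal second-moment condition under which the SLLN and CLT for $\log \norm{A_1 \cdots A_n}$ and $\log \norm{y A_1 \cdots A_n}$ hold for strongly irreducible, unbounded (equivalently, non-compact modulo similarities; here we only need unboundedness plus strong irreducibility) groups --- this is exactly the content of the results of \cite{Benoist2016,Benoist2016a} (transferred from left to right products by transposition, as noted in the text). This yields $\lambda \in \R$ and $\sigma^2 > 0$ such that $\frac1n \log F(X_{nh}) \to h\lambda$ a.s.\ and $(\log F(X_{nh}) - nh\lambda)/\sqrt{nh} \todr \mathcal{N}(0,\sigma^2)$ along the lattice $\{nh : n \in \N\}$; here I absorb the factor $h$ into the definitions of $\lambda,\sigma^2$ so that the statements read as in \eqref{eq:SLLN:theorem}--\eqref{eq:CLT:theorem} along that lattice.

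Next I would remove the lattice restriction. Write $t = n_t h + r_t$ with $n_t = \lfloor t/h \rfloor$ and $r_t \in [0,h)$. By the multiplicativity $X_t = X_{n_t h}\, X_{n_t h, t}$ and Lemma~\ref{lem:EMa} applied with $g = yX_{n_t h}/\norm{yX_{n_t h}}$ (resp.\ with the full-norm version, for which one uses the submultiplicativity $|\log\norm{ab} - \log\norm{a}| \le \log M(b)$, proven in the same way as Lemma~\ref{lem:EMa}), we get
\begin{equation*}
	\big| \log F(X_t) - \log F(X_{n_t h}) \big| \le \log M(X_{n_t h, t}) \le \sup_{0 \le s \le h} \log M(X_{n_t h, n_t h + s}).
\end{equation*}
Lemma~\ref{lem:M(Xs).vanishes} (with $\kappa = 1$ for the SLLN, $\kappa = 2$ for the CLT; note $h \le 1$ so the sup over $[0,h]$ is dominated by the sup over $[0,1]$) gives that $\frac1{n_t} \sup_{0 \le s \le 1}\log M(X_{n_t, n_t + s}) \to 0$ a.s., hence the same with step $h$ after rescaling, so the error term is $o(n_t) = o(t)$ a.s.\ and also $o(\sqrt{t})$ a.s. Dividing by $t$ gives \eqref{eq:SLLN:theorem}, and dividing by $\sqrt t$ and invoking Slutsky's theorem (together with $\sqrt{n_t h}/\sqrt t \to 1$ and $(n_t h - t)\lambda/\sqrt t \to 0$) gives \eqref{eq:CLT:theorem}.

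The positivity $\sigma^2 > 0$ I would take directly from the cited discrete-time theorem: for strongly irreducible and unbounded $G_X^h$ the limiting variance in the CLT for $\log\norm{yX_{nh}}$ is strictly positive (the degenerate case $\sigma^2 = 0$ is precisely the case where $G_X^h$ is, up to conjugation, contained in a compact group times scalars, which is excluded by unboundedness together with strong irreducibility --- see \cite[Ch.~III]{Bougerol1985} and \cite{Benoist2016a}). I expect the main obstacle to be a bookkeeping one rather than a conceptual one: making sure that the versions of the discrete-time SLLN/CLT cited apply under exactly the stated geometric hypotheses (strong irreducibility plus unboundedness, \emph{without} proximality --- proximality is needed later for the Berry-Esseen bounds and for uniqueness of the invariant measure, but not for the CLT for $\log\norm{yX_{nh}}$ itself), and that the passage between left products (as in the quoted literature) and the right products $X_{nh} = A_1\cdots A_n$ arising here is handled cleanly by transposition, noting that $\norm{a^T} = \norm{a}$ for the operator norm induced by a Euclidean norm --- if a non-Euclidean vector norm is fixed, one first replaces it by a Euclidean one, which changes $\log F(X_t)$ only by a bounded amount and hence affects neither \eqref{eq:SLLN:theorem} nor \eqref{eq:CLT:theorem}.
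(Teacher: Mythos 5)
Your proposal is correct and follows essentially the same route as the paper: apply the discrete-time SLLN and CLT of Furstenberg--Kesten and Benoist--Quint to the i.i.d.\ product structure of a skeleton of $X$, bound $|\log F(X_t)-\log F(X_{\lfloor t\rfloor})|$ by $\sup_{0\le s\le 1}\log M(X_{n,n+s})$ via Lemma~\ref{lem:EMa}, kill that term with Lemma~\ref{lem:M(Xs).vanishes} for $\kappa=1$ resp.\ $\kappa=2$, and conclude by Slutsky. The only (harmless) deviation is that you pass to a dyadic skeleton $h=2^{-\ell}$ to ensure the geometric hypotheses hold for $G_X^h$, whereas the paper works directly with the unit-time skeleton $(X_n)_{n\in\N}$.
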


\begin{remark}
	There exist several expressions for $\lambda$ and $\sigma^2$ in the literature, however, all of them involve either computing a limit or evaluating an integral with respect to the stationary measure of $Z_t^x$.
	\begin{itemize}
			\item If $\E \big[ M(X_1)^\delta\big]<\infty$ for some $\delta>0$, then the limit
		$$ \Lambda(s):=\lim_{n \to \infty} \frac1n \log \EE \big[ \norm{X_n}^s\big]$$ is well defined for $s \in [0,\delta)$, see e.g. \cite[Sect. 4.4.3]{Buraczewski2016a}, and has an analytic extension  on $(-\delta,\delta)$, see e.g. \cite[Prop. 3.1]{Xiao2022} in connection with  \cite[Thm. 2.6]{Guivarch2016}. Then  $\lambda=\Lambda'(0)$ and $\sigma^2=\Lambda''(0)$ by \cite[Props. 3.13 and 3.15]{Xiao2022}.  In a sense, $\Lambda$ can be understood as an analogue of a cumulant generating function.
		\item The convergence in \eqref{eq:SLLN:theorem} holds also in $L^1$ (see \cite{Furstenberg1960}), hence $\lambda=\lim_{n \to \infty} \frac{1}{n} \EE \big[ \log \norm{X_n}\big]$. In addition, if $\E \big[ M(X_1)^\delta\big]<\infty$ for some $\delta>0$, then by \cite[Prop. 14.17]{Benoist2016a},
		$$ \sigma^2= \lim_{n \to \infty} \frac{1}{n} \EE \Big[ \big(\log \norm{X_n} - n \lambda\big)^2 \Big].$$
		\item By \cite[Prop. III.7.2]{Bougerol1985}, it further holds  
		$$\lambda=\int \EE \log \abs{Y_1^x} \pi(dx).$$ 
	\end{itemize}
\end{remark}

\begin{proof}[Proof of Theorem \ref{CLT}]
	Recalling that $X_n=\prod_{k=1}^n X_{k-1,k}$ is a product of i.i.d.\ random invertible matrices, we may
    apply \cite[Thm. 2]{Furstenberg1960} and \cite[Thm. 1.1]{Benoist2016} which provide us with the validity of \eqref{eq:SLLN:theorem} and \eqref{eq:CLT:theorem}, respectively, along the discrete skeleton $(X_n)_{n \in \N}$. 
    
    To prove the limit theorems with a continuous time parameter, consider any sequence $(t_k)_{k \in \N}$ with $t_k \to \infty$ as $k \to \infty$. Then we may choose a corresponding sequence of integers $n_k$ such that $n_k \le t_k < n_k+1$, hence it holds as well that $n_k \to \infty$ as $k \to \infty$.
    
    Turning first to the strong law of large numbers \eqref{eq:SLLN:theorem} with $F(\cdot)=\norm{\cdot}$, we have for any $t>0$ and $n \in \N$ such that $n \le t < n+1$
    \begin{align*}
    	\frac{1}{t} \log \norm{X_{t}} = \frac{n}{t} \cdot  \frac{1}{n}\Big( \log \norm{X_{t}} - \log\norm{X_{n}} +  \log\norm{X_{n}}  \Big).
    \end{align*}
    Since $n/t\to 1$  and, by the result for the discrete-time skeleton, $n^{-1} \log \norm{X_{n}} \to \lambda$ \as\ for $n \to \infty$, the result follows once we have proved 
    that $\sup_{0 \le s \le 1} \big| \log \norm{X_{n+s}} - \log \norm{X_n} \big|  $
     vanishes \as\ for $n \to \infty$.
    
    For an invertible matrix $g$, we write $\overline{g}:= \norm{g}^{-1} g$. Using \eqref{eq:bound for log norm ga}, we obtain
    \begin{align*}
    	 \lefteqn{\sup_{0 \le s \le 1} \big| \log \norm{X_{n+s}} - \log \norm{X_n} \big|  
    	 = \sup_{0 \le s \le 1} \big| \log \norm{X_n X_{n,n+s}} - \log \norm{X_n} \big| }\\
    	& = \sup_{0 \le s \le 1} \Big| \log \frac{\norm{X_n X_{n,n+s}}}{\norm{X_n}} \Big| = \sup_{0 \le s \le 1} \big| \log \norm{\overline{X_n} X_{n,n+s}} \big| \le \sup_{0 \le s \le 1} \log M(X_{n,n+s}) .
    \end{align*}
    Invoking Lemma \ref{lem:M(Xs).vanishes} (with $\kappa=1$), we hence conclude that
    $$
    	\lim_{n \to \infty} \sup_{0 \le s \le 1} \big| \log \norm{X_{n+s}} - \log \norm{X_n} \big| = 0 \quad \as 
    $$
 
 Turning now to the central limit theorem \eqref{eq:CLT:theorem}, we decompose for $n \le t < n+1$
 \begin{align*}
 	\frac{\log \norm{X_t} - t \lambda}{\sqrt{t}} =& \frac{\sqrt{n}}{\sqrt{t}} \frac{1}{\sqrt{n}} \Big( \log \norm{X_t} - \log \norm{X_n} + \log \norm{X_n} - n \lambda + n \lambda - t\lambda  \Big) \\
 	= & \frac{\sqrt{n}}{\sqrt{t}} \frac{\log \norm{X_n} - n \lambda}{\sqrt{n}} + \frac{\sqrt{n}}{\sqrt{t}} \frac{1}{n^{1/2}} \Big( \log \norm{X_t} - \log \norm{X_n})\Big) - \frac{(t-n)\lambda}{\sqrt{t}} . 
 \end{align*}
 
 By the same arguments as before, now using Lemma \ref{lem:M(Xs).vanishes} with $\kappa=2$, we deduce that the middle term vanishes \as; as does the last term. The first term converges in distribution by the central limit theorem for the discrete-time skeleton. By Slutsky's theorem, we conclude the convergence in distribution of the full term to $\mathcal{N}(0, \sigma^2)$. Here, $\sigma^2$ is the same as for the discrete-time sceleton $(X_n)_{n \in \N}$. 
 
 To obtain the strong law of large numbers and the central limit theorem for $F(a)=\log \norm{ya}$, we use \cite[Thm. 4.11]{Benoist2016} which provides us with the corresponding results for the discrete-time skeleton. Then the proof goes along the same lines, using that \eqref{eq:bound for log norm ga} holds for vectors as well.
\end{proof}

\begin{theorem}\label{CLTcoefficients}
	In addition to the assumptions of Theorem~\ref{CLT}, assume that $G_X$ is proximal and that $\EE \big[\sup_{0 \le s \le 1} M(X_s)^\epsilon\big]<\infty$ for some $\epsilon>0$. Then 
	the CLT \eqref{eq:CLT:theorem} holds for $F(a)=\abs{\skalar{ya,z}}$ as well, for any $y,z \in \R^d$ with $\norm{y}=\norm{z}=1$.\\
	In particular, choosing $y=e_i$ and $z=e_j$ as unit vectors, there exist $\lambda\in\RR$, $\sigma^2>0$ such that 
			\begin{equation*}
			\frac{\log X_t^{(i,j)} - t \lambda}{\sqrt{t}} \stackrel{d}{\to} \mathcal{N}(0, \sigma^2), \quad t\to \infty.
		\end{equation*}
\end{theorem}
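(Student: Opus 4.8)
The plan is to deduce the result from the corresponding statement for discrete-time products of random matrices and then to remove the small-time contributions exactly as in the proof of Theorem \ref{CLT}. For the discrete skeleton $(X_n)_{n \in \N}$, which is a product of i.i.d.\ invertible matrices whose generated group $G_X$ is unbounded, strongly irreducible and proximal, the CLT for the coefficients $\log \abs{\skalar{yX_n,z}}$ is available in the literature: under the second moment assumption $\E[(\log M(X_1))^2]<\infty$ (which follows from $\E[\sup_{0\le s\le1}(\log M(X_s))^2]<\infty$), the relevant result is \cite[Thm.~1.1 and Thm.~4.11]{Benoist2016} (together with the discussion of the coefficient functionals there), giving $\lambda \in \R$ and $\sigma^2>0$ with
\begin{equation*}
	\frac{1}{n}\log \abs{\skalar{yX_n,z}} \to \lambda \quad \as, \qquad \frac{\log \abs{\skalar{yX_n,z}} - n\lambda}{\sqrt{n}} \stackrel{d}{\to} \mathcal{N}(0,\sigma^2).
\end{equation*}

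Next I would pass from the discrete parameter $n$ to the continuous parameter $t$. Writing $n \le t < n+1$, decompose
\begin{equation*}
	\frac{\log \abs{\skalar{yX_t,z}} - t\lambda}{\sqrt{t}} = \frac{\sqrt n}{\sqrt t}\cdot\frac{\log \abs{\skalar{yX_n,z}}-n\lambda}{\sqrt n} + \frac{\sqrt n}{\sqrt t}\cdot\frac{\log \abs{\skalar{yX_t,z}} - \log \abs{\skalar{yX_n,z}}}{\sqrt n} - \frac{(t-n)\lambda}{\sqrt t},
\end{equation*}
and likewise for the SLLN. The first term converges in distribution by the discrete result since $\sqrt n/\sqrt t \to 1$, and the last term is $O(1/\sqrt t)\to 0$; so by Slutsky's theorem it remains to control the middle increment term. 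The key estimate is that for $\norm{y}=\norm{z}=1$ and $g \in \GL$,
\begin{equation*}
	\big|\log \abs{\skalar{yg,z}}\big| \le \ldots ?
\end{equation*}
Here is the one genuine obstacle: unlike $\norm{yg}$, the quantity $\abs{\skalar{yg,z}}$ is \emph{not} comparable to $M(g)^{\pm1}$ in general, because $yg$ can be nearly orthogonal to $z$ and then $\skalar{yg,z}$ is tiny even when $M(g)$ is close to one. So a direct analogue of Lemma \ref{lem:EMa} is unavailable for the increment. The way around this is to use proximality and the concentration of the direction $Z^y_{n,n+s}$ near the stationary measure $\pi$: for the increment I would write $yX_{n+s} = \norm{yX_n}\,\overline{yX_n}\,X_{n,n+s}$ and separate
\begin{equation*}
	\log \abs{\skalar{yX_{n+s},z}} - \log \abs{\skalar{yX_n,z}} = \big(\log \norm{yX_{n+s}} - \log \norm{yX_n}\big) + \log \frac{\abs{\skalar{\overline{yX_{n+s}},z}}}{\abs{\skalar{\overline{yX_n},z}}}.
\end{equation*}
The first bracket is handled exactly as in Theorem \ref{CLT} via Lemma \ref{lem:EMa} and Lemma \ref{lem:M(Xs).vanishes} with $\kappa=2$. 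For the second bracket one uses that, by proximality and Proposition \ref{prop:convergenceOnepoint}, the directions $\overline{yX_n}$ and $\overline{yX_{n+s}}$ both concentrate near $\pi$, which under the (i-p) assumption is a non-degenerate measure on $P(\R^d)$ assigning mass $0$ to the hyperplane $z^\perp$; a regularity (Hölder) estimate for $\pi$ near $z^\perp$ — available because $\pi$ satisfies the well-known estimate $\pi(\{x:\abs{\skalar{x,z}}\le \varepsilon\}) \le C\varepsilon^\alpha$ for some $\alpha>0$, see \cite{Guivarch2016} — shows that $\log \abs{\skalar{\overline{yX_n},z}}$ is $o(n^{1/2})$ a.s.\ along $n$, and a maximal-inequality/Borel–Cantelli argument over $s \in [0,1]$ using right-continuity of paths gives that the difference of the two log-terms vanishes a.s.\ after division by $\sqrt n$.

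Combining: the SLLN $\frac1t\log\abs{\skalar{yX_t,z}}\to\lambda$ follows from the discrete SLLN plus vanishing of the normalised increment (with $\kappa=1$), and the CLT follows from the discrete CLT plus Slutsky. Specialising to $y=e_i$, $z=e_j$ gives $\skalar{yX_t,z}=X_t^{(i,j)}$ and hence the stated convergence of $\log X_t^{(i,j)}$ (noting $X_t^{(i,j)}$ is eventually of constant sign, so $\log$ of its absolute value is what is meant). The main obstacle, to reiterate, is the lack of a clean $M(\cdot)$-type bound for the coefficient increment; the resolution is to trade it for a norm increment plus a direction term that is controlled by the regularity of the stationary measure $\pi$ on the projective space.
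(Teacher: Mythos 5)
Your core idea coincides with the paper's: both proofs reduce the coefficient CLT to the norm CLT via the decomposition $\log\abs{\skalar{yX_t,z}}=\log\norm{yX_t}+\log\delta(yX_t,z)$ with $\delta(y,z)=\abs{\skalar{y,z}}/(\norm{y}\norm{z})$, and you correctly identify the genuine obstacle (no $M(\cdot)$-type lower bound for coefficients). However, your route to controlling the directional correction has a real gap. You aim for an \emph{almost sure}, \emph{uniform over $s\in[0,1]$} bound on $\log\abs{\skalar{\overline{yX_{n+s}},z}}$ via regularity of $\pi$ plus a maximal inequality. This is both unattainable and unnecessary: the coefficient path $s\mapsto\skalar{yX_{n+s},z}$ can change sign, hence vanish, at intermediate times in $[n,n+1]$, so $\sup_{0\le s\le 1}\abs{\log\delta(yX_{n+s},z)}=+\infty$ with positive probability and no Borel--Cantelli argument over the supremum can succeed. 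Moreover, the regularity estimate $\pi(\{x:\abs{\skalar{x,z}}\le\varepsilon\})\le C\varepsilon^\alpha$ for the \emph{limiting} measure does not by itself control $\PP(\delta(yX_n,z)\le e^{-\eta\sqrt{n}})$; you would need the uniform finite-$n$ version of this bound, which is a nontrivial additional input. A secondary misattribution: \cite[Thm.~1.1, Thm.~4.11]{Benoist2016} give the CLT for $\norm{X_n}$ and $\norm{yX_n}$, not for coefficients; the discrete coefficient CLT is not simply quoted but must itself be derived by the comparison argument.

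The paper avoids all of this by observing that Slutsky's theorem only requires $\log\delta(yX_t,z)/\sqrt{t}\to 0$ \emph{in probability at the single time $t$}, not uniformly over the interpolation interval and not almost surely. It first establishes the continuous-time norm CLT (Theorem~\ref{CLT}), then applies the large-deviation bound $\PP(\log\delta(yX_n,z)\le-\epsilon n)\le e^{-cn}$ of \cite[Prop.~14.3]{Benoist2016a} --- which is uniform in the starting direction --- to the increment $X_{s_k,n_k+s_k}$ by conditioning on $\mathcal{F}_{s_k}$ with $s_k=t_k-n_k$, and concludes by Slutsky. To repair your argument, drop the a.s./uniform-in-$s$ ambition entirely, work with the fixed time $t$ and the already-proved continuous-time norm CLT, and control $\log\delta(yX_t,z)$ in probability by the conditioning trick; the rest of your interpolation machinery (Lemmas~\ref{lem:EMa} and~\ref{lem:M(Xs).vanishes}) is then only needed for the norm part, where it works.
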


\begin{proof}
	We follow the strategy of \cite[Sec. 14.4]{Benoist2016a}, i.e., we compare $\log \norm{yX_t}$ with $\log\abs{\skalar{yX_t,z}}$. Introducing the quantity
	$$ \delta(y,z) := \frac{\abs{\skalar{y,z}}}{\norm{y}\norm{z}},$$
	we note $\delta \le 1$ and observe that 
	\begin{align}\label{eq:property delta}
		\log\abs{\skalar{yX_t,z}}=\log \norm{yX_t} + \log \delta \big(yX_t,z\big).
	\end{align}
	 
	 By \cite[Prop. 14.3]{Benoist2016a}, for any $\epsilon>0$, there is $c>0$ and $n_0 \in \N$ such that for all $n \ge n_0$, $y,z \in \R^d$ with $\norm{y}=\norm{z}=1$ one has
	 $$ \PP \big( \log \delta(yX_n,z) \le -\epsilon n \big) \le e^{-c n}.$$
	 
	 Consider now any sequence $(t_k)_{k \in \N}$ with $t_k \to \infty$ as $k \to \infty$, let $(n_k)_{k \in \N}$ be an integer sequence with $n_k \le t_k < n_k+1$ for all $k$ and set $s_k=t_k-n_k$. Then	 
	 \begin{align*}
	 	\PP \big( \log \delta(yX_{t_k},z) \le -\epsilon n_k \big) &= \int  \PP \big( \log \delta((u X_{s_k, n_k+s_k},z) \le -\epsilon n_k \big) \, \PP (yX_{s_k} \in du) \\
	 	&\le e^{-c n_k}. 
	 \end{align*}
	 Using that $\epsilon>0$ was arbitrary, $n_k/t_k \to 1$, and $\log \delta(\cdot)\le 0$, we conclude 
	 $$ \PP\text{-}\lim_{t \to \infty} \frac{1}{t} \log \delta(yX_{t_k},z)= 0.$$
	 Recalling \eqref{eq:property delta}, the asserted CLT then follows by an application of Slutsky's theorem from the corresponding result for $\log \norm{yX_t}$, Theorem~\ref{CLT}.
	\end{proof}

\subsection{Berry-Esseen bounds and convergence of the one-point motion}

In this section we extend the central limit theorem presented in the last subsection by providing Berry-Esseen bounds as well as  joint convergence of the directional and radial components, $Z_t^y$ and $\log \norm{Y_t^y}$, respectively, of the one-point motion. We rely on results for (discrete-time) products of random matrices in \cite{Xiao2022} and \cite{Cuny2023}.

\begin{theorem}\label{BerryEsseen} Let $L$ be a L\'evy process in $\RR^{d\times d}$ fulfilling \eqref{eq-nonsingular} and let $X$ be the corresponding stochastic exponential.
	Assume that $G_X$ satisfies  (i-p) and $\EE \big[ \sup_{0 \le s \le 1} (\log M(X_s))^4 \big]<\infty$. Then there is a constant $C>0$ such that for all $n \ge 1$, $y \in \R^d$ with $\norm{y}=1$ and $z \in \R$,
\begin{equation}\label{eq:BerryEsseenTheorem0}
	\bigg| \PP \bigg(\frac{\log \norm{Y_t^y}-t\lambda}{\sigma \sqrt{t}} \le z\bigg) -  \Phi(z)  \bigg| ~\le~\frac{C}{\sqrt{t}}. 
\end{equation}
Under the additional assumption that $\EE \big[ M(X_1)^\delta\big]<\infty$ for some $\delta>0$,  there exist $C>0$ and $\gamma>0$ such that for all $n \ge 1$, $y \in \R^d$ with $\norm{y}=1$, $z \in \R$ and $\varphi \in \mathcal{B}_\gamma$,
	\begin{equation}\label{eq:BerryEsseenTheorem2}
		\bigg| \EE \bigg[ \varphi(Z_t^{\hat{y}}) \bone\bigg\{\frac{\log \norm{Y_t^y}-t\lambda}{\sigma \sqrt{t}} \le z\bigg\}\bigg] - \pi(\varphi) \Phi(z)  \bigg| ~\le~\frac{C}{\sqrt{t}} \norm{\varphi}_\gamma .
	\end{equation}
\end{theorem}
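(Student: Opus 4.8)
The plan is to deduce both inequalities from the discrete-time Berry--Esseen theorems for products of i.i.d.\ invertible matrices, applied to a suitable skeleton $(X_{nh})_{n\in\N}$, and then to transfer from discrete to continuous time. Since $G_X$ satisfies (i-p), the lemma guaranteeing that a sufficiently fine skeleton inherits (i-p) provides $h=2^{-\ell}$ such that $G_X^h$ satisfies (i-p); as $X_{nh}=\prod_{k=1}^{n}X_{(k-1)h,kh}$ is a product of i.i.d.\ copies of $X_h$, whose law has support generating the closed group $G_X^h$, the discrete-time results apply. The moment hypotheses descend to the skeleton: $\E[(\log M(X_h))^4]\le\E[\sup_{0\le s\le1}(\log M(X_s))^4]<\infty$, and $\E[M(X_h)^\delta]\le\E[\sup_{0\le s\le1}M(X_s)^\delta]<\infty$ under the extra assumption. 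Hence, writing $W'_m:=(\log\norm{yX_{mh}}-mh\lambda)/(\sigma\sqrt{mh})$, the discrete Berry--Esseen bound of \cite{Cuny2023} for the norm cocycle yields $\sup_{z}\abs{\PP(W'_m\le z)-\Phi(z)}\le C_0/\sqrt m$, and the discrete joint Berry--Esseen bound of \cite{Xiao2022} for the couple (direction, norm) yields, for some $\gamma_0>0$ and all $\psi\in\mathcal B_{\gamma_0}$, all $y\in\R^d$ with $\norm y=1$, $z\in\R$, $m\ge1$,
\[
	\Big|\E\big[\psi(Z^{\hat y}_{mh})\,\bone\{W'_m\le z\}\big]-\pi(\psi)\Phi(z)\Big|\;\le\;\frac{C_0}{\sqrt m}\,\norm{\psi}_{\gamma_0},
\]
with $\lambda,\sigma^2,\pi$ the skeleton-independent quantities of Theorem~\ref{CLT} and Proposition~\ref{prop:convergenceOnepoint}. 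For \eqref{eq:BerryEsseenTheorem2} fix $\gamma:=\min\{\gamma_0,\delta/2\}$.

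To pass from skeleton times $mh$ to an arbitrary $t\ge 2h$ (for $t<2h$ both bounds hold trivially by enlarging $C$), write $t=mh+r$ with $m=\lfloor t/h\rfloor$, $0\le r<h$, set $\beta_t:=\sqrt{mh/t}\in[\tfrac12,1]$ and $W_t:=(\log\norm{Y^y_t}-t\lambda)/(\sigma\sqrt t)$. From $X_t=X_{mh}X_{mh,mh+r}$, writing the last increment multiplicatively and using Lemma~\ref{lem:EMa},
\[
	W_t=\beta_t W'_m+R_t,\qquad \abs{R_t}\le\frac{\log M(X_{mh,mh+r})+h\abs{\lambda}}{\sigma\sqrt t},\qquad Z^{\hat y}_t=Z^{\hat y}_{mh}\cdot X_{mh,mh+r}.
\]
Replacing the scaling $\sqrt{mh}$ by $\sqrt t$ is harmless, since $\abs{\Phi(z/\beta_t)-\Phi(z)}=O(1-\beta_t)=O(1/t)$ uniformly in $z$ (as $1-\beta_t\le h/t$ and $\abs{z\Phi'(z)}$ is bounded). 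The term $R_t$ is the delicate one, and I would handle it by conditioning on $\mathcal F_{mh}$: then $(Z^{\hat y}_{mh},W'_m)$ is $\mathcal F_{mh}$-measurable whereas $G:=X_{mh,mh+r}$ is independent of $\mathcal F_{mh}$ with $G\overset{d}{=}X_r$. Replacing $\varphi(Z^{\hat y}_t)=\varphi(Z^{\hat y}_{mh}\cdot G)$ by its $G$-average $\bar\varphi(Z^{\hat y}_{mh})$, $\bar\varphi:=\E[\varphi(\,\cdot\,G)]$, and using $\bone\{\beta_t w+R\le z\}=\bone\{w\le (z-R)/\beta_t\}$, one bounds $\big|\E[\varphi(Z^{\hat y}_t)\bone\{W_t\le z\}]-\E[\bar\varphi(Z^{\hat y}_{mh})\bone\{W'_m\le z/\beta_t\}]\big|$ by $\norm{\varphi}_\infty\,\PP(\abs{W'_m-z/\beta_t}\le\varrho(G))$, where $\varrho(G):=(\log M(G)+h\abs{\lambda})/(\sigma\sqrt t\beta_t)$ and $G$ is independent of $W'_m$. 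Since the law of $W'_m$ is $C_0/\sqrt m$-close in Kolmogorov distance to the $(2\pi)^{-1/2}$-Lipschitz function $\Phi$, this is at most $\norm{\varphi}_\infty\big(2\E[\varrho(G)]/\sqrt{2\pi}+2C_0/\sqrt m\big)=O(\norm{\varphi}_\infty/\sqrt t)$, using $\E[\log M(G)]=\E[\log M(X_r)]\le\E[\sup_{0\le s\le1}\log M(X_s)]<\infty$ and $m\ge t/(2h)$.

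It then remains to control $\E[\bar\varphi(Z^{\hat y}_{mh})\bone\{W'_m\le z/\beta_t\}]$. Since the action of $g\in\GL$ on $P(\R^d)$ satisfies $\mathbf d(\hat u\cdot g,\hat w\cdot g)\le M(g)^2\,\mathbf d(\hat u,\hat w)$ and $\gamma\le\delta/2$, one obtains $\bar\varphi\in\mathcal B_\gamma$ with $\norm{\bar\varphi}_\gamma\le C_1\norm{\varphi}_\gamma$, while $\pi(\bar\varphi)=\E_{\hat x\sim\pi}[\varphi(\hat x\cdot X_r)]=\pi(\varphi)$ by invariance of $\pi$ for $(Z_t)_{t\ge0}$. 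Applying the displayed discrete joint Berry--Esseen bound to $\psi=\bar\varphi$ at time $m$, together with $\Phi(z/\beta_t)=\Phi(z)+O(1/t)$ and $m\ge t/(2h)$, gives $\E[\bar\varphi(Z^{\hat y}_{mh})\bone\{W'_m\le z/\beta_t\}]=\pi(\varphi)\Phi(z)+O(\norm{\varphi}_\gamma/\sqrt t)$, which combined with the estimate of the previous paragraph yields \eqref{eq:BerryEsseenTheorem2}. Taking $\varphi\equiv1$ (so $\bar\varphi\equiv1$, and only the norm Berry--Esseen input — valid under the mere fourth-moment hypothesis — is invoked) yields \eqref{eq:BerryEsseenTheorem0}.

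I expect the main obstacle to be precisely this transfer from $mh$ to $t$: the correction $R_t$ accumulated over the last time-interval is of order $t^{-1/2}$, i.e.\ exactly the order of the Berry--Esseen rate being established, so it cannot be discarded as a lower-order perturbation. The way around it is the conditioning on $\mathcal F_{mh}$, which renders the last increment $G$ independent of the $\mathcal F_{mh}$-measurable pair $(Z^{\hat y}_{mh},W'_m)$, together with the near-Gaussian regularity of the law of $W'_m$ furnished by the discrete Berry--Esseen bound itself — so that averaging out $G$ costs only a \emph{first} moment of $\log M(G)$. For \eqref{eq:BerryEsseenTheorem2} the additional points are that $\bar\varphi=\E[\varphi(\,\cdot\,G)]$ must remain $\gamma$-Hölder with controlled norm (which forces $\gamma\le\delta/2$) and that one uses $\pi(\bar\varphi)=\pi(\varphi)$.
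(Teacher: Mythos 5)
Your overall strategy coincides with the paper's: both bounds are imported from the discrete-time results of \cite{Cuny2023} and \cite{Xiao2022} and transferred to continuous time by conditioning. The difference is the direction of the decomposition. You write $X_t=X_{mh}\,X_{mh,t}$ and put the fractional remainder at the \emph{end}; the paper writes $t=n+r$ with $0\le r<1$ and puts it at the \emph{beginning}, conditioning on $\mathcal F_r$ and applying the integer-step discrete bound to $X_{r,n+r}$ started from the random unit vector $Z_r^{\hat u}$, with an $\mathcal F_r$-measurable shift of the threshold $z$ that is then absorbed via $\abs{\Phi(x)-\Phi(y)}\le\abs{x-y}$ and $\E\big[\sup_{0\le s\le1}\log M(X_s)\big]<\infty$. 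Your anti-concentration treatment of the end-remainder --- bounding $\PP\big(\abs{W'_m-z/\beta_t}\le\varrho(G)\big)$ by the Lipschitz continuity of $\Phi$ together with the discrete Berry--Esseen bound itself, exploiting that the bound $\varrho(G)$ depends only on the last increment, which is independent of $W'_m$ --- is correct and suffices for \eqref{eq:BerryEsseenTheorem0}, where $\varphi\equiv1$ and only $\E[\log M(X_r)]<\infty$ enters.

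For \eqref{eq:BerryEsseenTheorem2}, however, the reversal creates a genuine gap. Placing the remainder at the end forces you to average the test function over the last increment, $\bar\varphi=\E[\varphi(\cdot\,G)]$ with $G\overset{d}{=}X_r$, $r<h$, and to control $[\bar\varphi]_\gamma\le\E\big[M(G)^{c\gamma}\big][\varphi]_\gamma$; likewise, invoking \cite{Xiao2022} for the step-$h$ skeleton requires $\E\big[M(X_h)^{\delta'}\big]<\infty$. You justify both by writing $\E[M(X_h)^\delta]\le\E\big[\sup_{0\le s\le1}M(X_s)^\delta\big]<\infty$, but the hypothesis of the theorem is only $\E\big[M(X_1)^\delta\big]<\infty$ --- a power moment at the single time $1$ --- and the fourth-moment assumption concerns $\log M$, so it yields no positive power moment of $M(X_s)$ for $s<1$. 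The submultiplicative bound $M(X_s)\le M(X_1)M(X_{s,1})$ only reproduces the problem at time $1-s$, so the needed moments at fractional times do not follow without an additional argument. (A secondary point: the contraction of the projective action is $\mathbf d(\hat u\cdot g,\hat w\cdot g)\le M(g)^4\,\mathbf d(\hat u,\hat w)$, not $M(g)^2$, which would force $\gamma\le\delta/4$.) The paper's front-loaded decomposition avoids all of this: the discrete bound is applied with the test function $\varphi$ itself, evaluated along the trajectory started at $Z_r^{\hat u}$, so no averaged test function and no positive power moment of $M(X_r)$ is ever needed; only the first log-moment over $[0,1]$ appears in the transfer. Reversing your decomposition accordingly repairs the argument.
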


\begin{proof}
	For discrete $t \in \N$, the Berry-Esseen result under the fourth moment assumption, \eqref{eq:BerryEsseenTheorem0}, is proved in \cite[Thm. 2.2]{Cuny2023}. Under the additional moment assumption, the Berry-Esseen bound \eqref{eq:BerryEsseenTheorem2} for the joint convergence of $Z_t^u$ and $\log \norm{Y_t^u}$ is \cite[Thm. 2.1]{Xiao2022} in the discrete setting $t \in \N$. We show how to deduce \eqref{eq:BerryEsseenTheorem2} from the corresponding discrete-time result, the proof for \eqref{eq:BerryEsseenTheorem0} being exactly the same by setting $\varphi \equiv 1$. Note that in our estimations, we will only need finite moments of $\log M(X_s)$.
	
	Let $t=n+r$ for $n \in \N$, $0 \le r <1$. Applying \eqref{eq:BerryEsseenTheorem2}, that is \cite[Thm. 2.1]{Xiao2022}, for the discrete sceleton with $y=Z_r^u$, we have
	\begin{align*}
	\lefteqn{ \Big| \EE \bigg[ \varphi(Z_t^{\hat{u}}) \bone\bigg\{\frac{\log \norm{Y_t^u}-t\lambda}{\sigma \sqrt{t}} \le z\bigg\}\bigg] - \pi(\varphi) \Phi(z)  \Big|} \\
		& \le  \E \bigg[ \Big| \EE \bigg[ \varphi(Z_r^{\hat{u}} \cdot X_{r,n+r}) \bone\bigg\{\frac{\log \norm{Y_r^u X_{r,n+r}}-t\lambda}{\sigma \sqrt{t}} \le z\bigg\}\bigg] - \pi(\varphi) \Phi(z)  \Big| \, \bigg| \, \mathcal{F}_r \bigg] \\
		& =  \E \bigg[ \Big| \EE \bigg[ \varphi(Z_r^{\hat{u}} \cdot X_{r,n+r}) \bone\bigg\{\frac{\log \norm{Z_r^{\hat u} X_{r,n+r}}-n\lambda}{\sigma \sqrt{n}} \le \frac{\sqrt{t}}{\sqrt{n}} \big( z- \frac{\log \norm{Y_r^u}}{\sigma \sqrt{t}}+\frac{r \lambda}{\sigma \sqrt{t}}\big)\bigg\}\bigg] - \pi(\varphi) \Phi(z)  \Big| \, \bigg| \, \mathcal{F}_r \bigg] \\
		& \le  \frac{C}{\sqrt{n}} \norm{\varphi}_\gamma + \abs{\pi(\varphi)} \E \bigg[ \Big|\Phi\Big(\frac{\sqrt{t}}{\sqrt{n}}  z- \frac{\log \norm{Y_r^u}}{\sigma \sqrt{n}}+\frac{r \lambda}{\sigma \sqrt{n}}\Big) - \Phi(z) \Big| \bigg],
	\end{align*}
since $ \frac{Y^u_r }{\norm{Y^u_r}}=Z^{\hat{u}}_r.$
To estimate the last term, we use $\abs{\Phi(x)-\Phi(y)} =\abs{\int_{[y,x]} \Phi'(z) dz} \le \abs{x-y}$ and obtain
\begin{align}
	\lefteqn{\E \bigg[ \Big|\Phi\Big(\frac{\sqrt{t}}{\sqrt{n}}  z- \frac{\log \norm{Y_r^u}}{\sigma \sqrt{n}}+\frac{r \lambda}{\sigma \sqrt{n}}\Big) - \Phi(\tfrac{\sqrt{t}}{\sqrt{n}}  z) + \Big( \Phi(\tfrac{\sqrt{t}}{\sqrt{n}}  z) - \Phi(z)\Big) \Big| \bigg] } \nonumber \\
	&	\le  \frac{1}{\sigma \sqrt{n}}\E \Big[\abs{ \log \norm{Y_r^u}} \Big] + \frac{r\lambda}{\sigma \sqrt{n}} + \Big( \Phi(\tfrac{\sqrt{t}}{\sqrt{n}}  z) - \Phi(z)\Big). \label{eq:phi}
\end{align}
	Using Lemma \ref{lem:EMa}, we have that for any $u \in \R^d$ with $\norm{u}=1$ and for any $r \in [0,1]$, $\E \Big[\abs{ \log \norm{Y_r^u}} \Big] \le \E \Big[ \sup_{0\le s \le1} \log M(X_s)\Big]$, and the latter is finite due to our moment assumption. 
	To estimate the last term in \eqref{eq:phi}, we assume w.l.o.g. $z>0$ (otherwise, use the symmetry of $\Phi$.). By the mean value theorem there is $z \le \xi \le cz$ such that
	\begin{align*}
		\abs{\Phi(\tfrac{\sqrt{t}}{\sqrt{n}}z)-\Phi(z)} \le z\Phi'(\xi) (\tfrac{\sqrt{t}}{\sqrt{n}}-1) \le z \Phi'(z) \frac{\sqrt{n+r}-\sqrt{n}}{\sqrt{n}} \le \frac{\sqrt{r}}{\sqrt{n}}.
	\end{align*}
	Here we have also used that $z\Phi'(z) \le z e^{-\tfrac{1}{2} z^2} \le 1$ for all $z$, and the subadditivity $\sqrt{n+r} \le \sqrt{n}+\sqrt{r}$.
	
	We thus have proved
	\begin{align*}
		 \bigg| \EE \Big[ \varphi(Z_t^{\hat{u}}) \bone_{\left\{\frac{\log \norm{Y_t^u}-t\lambda}{\sigma \sqrt{t}} \le z \right\}}\Big] - \pi(\varphi) \Phi(z)  \bigg| 	\le  \frac{C}{\sqrt{n}} \norm{\varphi}_\gamma + \abs{\pi(\varphi)} \frac{C' + r\lambda + \sigma\sqrt{r}}{\sigma\sqrt{n}},
	\end{align*}
	and the assertion follows as $\sqrt{n}/\sqrt{t} \to 1$, $r \le 1$, and $\abs{\pi(\varphi)} \le \norm{\varphi}_\infty \le \norm{\varphi}_\gamma$.
\end{proof}

\section{Limit theorems for the determinant process}\label{sect:determinant}

In this section we derive asymptotics for the determinant of the stochastic exponential. As we will see in the next subsection, the determinant process has a particularly nice structure. In comparison to Section \ref{sect:limitNorms} this allows for completely different techniques to be used in order to describe its long-time behaviour as we will do in Section \ref{sectdetLLN}. 

\subsection{Structure of the determinant process}

Our first result proves that the determinant process is again a multiplicative Lévy process.

\begin{theorem} \label{thm-detprocess}
	Let $L=(L_t)_{t\geq 0}$ be a Lévy process in $\RR^{d\times d}$ with characteristic triplet $(\Sigma_L, \gamma_L, \nu_L)$ and fulfilling \eqref{eq-nonsingular} and let $D=(D_t)_{t\geq 0}$ be the determinant process of its stochastic exponential defined in \eqref{eq-detprocess}. 
	Then $D$ is a left (and right) Lévy process taking values in $\RR\setminus\{0\}$. In particular, $D_t=\cE(\check{L})_t$ for the real-valued (additive) Lévy process $\check{L}=(\check{L}_t)_{t\geq 0}$ given by
	$$\check{L}_t= \trace(L_t) + \frac12 \sum_{\substack{m,n=1\\m\neq n}}^d (\sigma_{(m,m),(n,n)} - \sigma_{(m,n),(n,m)})\cdot  t  + \sum_{0<s\leq t} (\det(\Id + \Delta L_s)-1- \trace(\Delta L_s)), \quad t\geq 0,$$
where $\sigma_{(m,j),(n,\ell)}$ denotes the covariance between the Gaussian components of $L^{(m,j)}$ and $L^{(n,\ell)}$. 
\end{theorem}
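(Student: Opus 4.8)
The plan is to verify that $D=(D_t)_{t\geq 0}$ is a left Lévy process in $\RR\setminus\{0\}$ by combining the multiplicativity $\det(X_t) = \det(X_s)\det(X_{s,t})$ with the stationarity and independence of the right increments $X_{s,t}$ of $X$; this immediately yields that $D_t = D_s \cdot \det(X_{s,t})$ has stationary, independent multiplicative increments and càdlàg paths, so $D$ is a (one-dimensional) multiplicative Lévy process in $\RR\setminus\{0\}$, hence of the form $\cE(\check L)$ for a real-valued additive Lévy process $\check L$ (it is automatically also a right Lévy process since $\RR\setminus\{0\}$ is abelian). The substantive part is identifying the characteristic triplet of $\check L$, equivalently computing $\cE^{-1}(D)$, the stochastic logarithm of $D$.

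For the computation I would apply Itô's formula for the $C^2$ function $\det : \RR^{d\times d}\to\RR$ to the semimartingale $X$. Recall that the first and second derivatives of the determinant are classical: $\frac{\partial}{\partial x^{(i,j)}}\det(x) = \adj(x)^{(j,i)}$, and on the invertible set $\adj(x) = \det(x) x^{-1}$, while the Hessian is expressible through $2\times 2$ minors. Writing $dX_t = X_{t-}\,dL_t$ and substituting into the Itô expansion gives three contributions: (i) the drift/martingale term $\sum_{i,j}\adj(X_{t-})^{(j,i)} (X_{t-}dL_t)^{(i,j)} = \det(X_{t-})\,\trace(dL_t)$, using $\adj(X_{t-}) = \det(X_{t-})X_{t-}^{-1}$ and the cyclic property of the trace, which accounts for the $\trace(L_t)$ term; (ii) the continuous quadratic-variation term $\tfrac12\sum \frac{\partial^2}{\partial x^{(i,j)}\partial x^{(k,l)}}\det(X_{t-})\, d[X^{(i,j),c},X^{(k,l),c}]_t$, where the continuous covariations of $X^c$ are given by $Q(X_{t-})$ as in Proposition \ref{prop:generatorXt}; carrying out the contraction of the Hessian of $\det$ against $Q(X_{t-})$ and again using $X_{t-}^{-1}$-identities produces the factor $\det(X_{t-})$ times the constant $\tfrac12\sum_{m\neq n}(\sigma_{(m,m),(n,n)}-\sigma_{(m,n),(n,m)})$; (iii) the jump term $\sum_{0<s\le t}\big(\det(X_s) - \det(X_{s-}) - \sum_{i,j}\adj(X_{s-})^{(j,i)}\Delta X_s^{(i,j)}\big)$, where $\Delta X_s = X_{s-}\Delta L_s$, so $\det(X_s) = \det(X_{s-})\det(\Id+\Delta L_s)$ and the compensated linear part is $\det(X_{s-})\trace(\Delta L_s)$; factoring out $\det(X_{s-}) = D_{s-}$ gives the summands $\det(\Id+\Delta L_s) - 1 - \trace(\Delta L_s)$. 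Assembling (i)--(iii) shows $dD_t = D_{t-}\,d\check L_t$ with $\check L$ as claimed, so $D = \cE(\check L)$.

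The main obstacle is the honest bookkeeping in step (ii): one must compute the second-order partials of the determinant at an invertible matrix, contract the resulting fourth-order tensor against $Q(X_{t-})^{(i,j,k,l)} = \sum_{n,m} X_{t-}^{(i,m)}\sigma_{(m,j),(n,l)}X_{t-}^{(k,n)}$, and simplify using $X_{t-}^{-1}X_{t-} = \Id$ to extract the clean constant; it is here that the index pattern $(\sigma_{(m,m),(n,n)}-\sigma_{(m,n),(n,m)})$ with the $m\neq n$ restriction emerges (the $m=n$ diagonal contributions cancel because $\partial^2\det/\partial(x^{(m,m)})^2 = 0$). A secondary technical point is justifying the interchange of Itô's formula with the matrix-valued stochastic integral representation $dX_t = X_{t-}dL_t$ and checking that the jump series converges (absolutely, locally), which follows since $\sum_{0<s\le t}\norm{\Delta L_s}^2<\infty$ a.s.\ and $\det(\Id+x) - 1 - \trace(x) = O(\norm{x}^2)$ near $0$, together with the fact that the large jumps are finite in number on compacts. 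Finally, one notes that $D_t\neq 0$ for all $t$ is exactly the non-singularity assumption \eqref{eq-nonsingular} propagated through the product structure, and that $\cE(\check L)$ stays in $\RR\setminus\{0\}$ precisely because $1+\Delta\check L_s = \det(\Id+\Delta L_s)\neq 0$.
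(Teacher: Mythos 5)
Your proposal follows essentially the same route as the paper: the Lévy property of $D$ is verified directly from $\det(X_t)=\det(X_s)\det(X_{s,t})$, and the triplet of $\check L$ is extracted by applying the multivariate It\^o formula to $f=\det$, using Jacobi's formula and the adjugate identity $x^{\adj}x=\det(x)\Id$ for the first-order and jump terms, with the second-order term handled by contracting the Hessian of $\det$ against the continuous covariations of $X$ and simplifying via Laplace expansions --- exactly the ``honest bookkeeping'' you flag as the main obstacle (your parenthetical reason for the $m\neq n$ restriction is slightly misattributed, but harmless since the $m=n$ summands vanish in the final difference anyway). The plan is correct and matches the paper's proof.
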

\begin{proof}
	The fact that $D$ is a left Lévy process can be easily shown by checking the defining properties: Clearly $D_0=\det X_0= \det \Id = 1$ a.s. Moreover, stationarity of the increments follows from the same property of $X$ as for all $s\leq t$
	\begin{align*}
		D_{s,t} &= D_s^{-1} D_t = (\det X_s)^{-1} \det (X_t) = \det(X_s^{-1} X_t)= \det (X_{s,t}) \\
		&\overset{d}= \det (X_{0, t-s}) = \ldots = D_{0,t-s}.
	\end{align*}
	Further, as $D_{s,t}= \det (X_{s,t})$ for all $s\leq t$, independence of the increments is immediate. Lastly, as $L$ has c\`adl\`ag paths by definition, the same is true for $X$ as the solution of \eqref{SDE} and hence for $D$ since the determinant mapping is continuous.\\
	To derive the given formula for $\check{L}$ we use a multivariate version of It\^o's formula as stated e.g. in \cite[Thm. II.33]{protter} which reads
	\begin{align}
		\lefteqn{f(X_t)-f(X_0)} \nonumber \\ & = \sum_{i,j=1}^d \int_{(0,t]} \frac{\partial f}{\partial x^{(i,j)}}(X_{s-}) d X_s^{(i,j)} + \frac12 \sum_{i,j,k,\ell=1}^d \int_{(0,t]} \frac{\partial^2 f}{\partial x^{(i,j)} \partial x^{(k,\ell)}} (X_{s-}) d[X^{(i,j)},X^{(k,\ell)}]_s^c \nonumber \\
		&\quad +\sum_{0<s\leq t} \left(f(X_s)-f(X_{s-}) - \sum_{i,j=1}^d \frac{\partial f}{\partial x^{(i,j)}} (X_{s-}) \Delta X_s^{(i,j)} \right) \label{eq-itodet}
	\end{align}
	for $f:\RR^{d\times d}\to \RR$ twice continuously differentiable. Choosing $f(X)=\det X$ we can now evaluate the above formula term-by-term as follows.\\
	From \cite[Eqs. (10.7.2) and (10.7.11)]{Bernstein} we obtain
	\begin{equation}\label{eq-detderivative}	\frac{\partial f}{\partial x^{(i,j)}}(X_{s-}) = (X_{s-}^{\adj})^{(j,i)} = (-1)^{i+j} \det(X_{s-,[i;j]}),\end{equation}
	where the superscript $\adj$ denotes the adjugate of a matrix, while the subscript $[i;j]$ denotes deletion of the $i$-th row and $j$-th column. With this we obtain for the first integral term in \eqref{eq-itodet} in similarity to the classical Jacobi equality (cf. \cite[Fact 10.12.8]{Bernstein})
	\begin{align*}
		\sum_{i,j=1}^d \int_{(0,t]} \frac{\partial f}{\partial x^{(i,j)}}(X_{s-}) d X_s^{(i,j)}&= \sum_{i,j=1}^d \int_{(0,t]} (X_{s-}^{\adj})^{(j,i)}  d X_s^{(i,j)} = \sum_{j=1}^d \left(\int_{(0,t]} (X_{s-}^{\adj})  d X_s \right)^{(j,j)}\\
		&= \trace\left( \int_{(0,t]}(X_{s-}^{\adj})  d X_s \right).
	\end{align*}
	Inserting \eqref{SDE} and using  \cite[Eq. (2.7.21)]{Bernstein} the above further equals
	\begin{align}
		\trace\left( \int_{(0,t]}(X_{s-}^{\adj}) X_{s-} d L_s \right)&= \trace\left(\int_{(0,t]} (\det X_{s-}) \Id d L_s \right) = \sum_{j=1}^d \left( \int_{(0,t]} D_{s-} \Id d L_s \right)^{(j,j)} \nonumber \\
		&= \sum_{j=1}^d   \sum_{\ell=1}^d \int_{(0,t]} D_{s-} \Id^{(j,\ell)} d L_s^{(\ell,j)}  = \sum_{j=1}^d \int_{(0,t]} D_{s-}  d L_s^{(j,j)} \nonumber \\
		&= \int_{(0,t]} D_{s-} d \trace(L_s). \label{eq-itodetterm1}
	\end{align}
	Concerning the jump part in \eqref{eq-itodet} we obtain by similar means
	\begin{align*}
		\sum_{i,j=1}^d \frac{\partial f}{\partial x^{(i,j)}} (X_{s-}) \Delta X_s^{(i,j)} &= 	\sum_{i,j=1}^d (X_{s-}^{\adj} )^{(j,i)}  \Delta X_s^{(i,j)} = \sum_{i,j=1}^d (X_{s-}^{\adj})^{(j,i)} \left(\sum_{\ell=1}^d  X_{s-}^{(i,\ell)} \Delta L_s^{(\ell,j)} \right) \\
		&=  \sum_{j,\ell=1}^d (X_{s-}^{\adj} X_{s-})^{(j,\ell)}    \Delta L_s^{(\ell,j)} = \sum_{j,\ell=1}^d (D_{s-} \Id )^{(j,\ell)}    \Delta L_s^{(\ell,j)}\\
		&= D_{s-} \trace(\Delta L_s).
	\end{align*}
	Adding the fact that
	$$f(X_s)= \det(X_{s-}X_{s-,s}) = \det X_{s-} \det(X_{s-,s}) = D_{s-} \det(\Id + \Delta L_s)$$
	we thus get
	\begin{align}
		\lefteqn{\sum_{0<s\leq t} \left(f(X_s)-f(X_{s-}) - \sum_{i,j=1}^d \frac{\partial f}{\partial x^{(i,j)}} (X_{s-}) \Delta X_s^{(i,j)} \right)}\nonumber \\
		&= \sum_{0<s\leq t} \left(D_{s-} \det(\Id + \Delta L_s)  - D_{s-} - D_{s-} \trace(\Delta L_s) \right) \nonumber \\
		&= \sum_{0<s\leq t} D_{s-} \left(\det(\Id + \Delta L_s)  - 1 -  \trace(\Delta L_s) \right).\label{eq-itodetterm3}
	\end{align}
	Lastly, for the second integral term in \eqref{eq-itodet}, note first that
	\begin{align*}
		[X^{(i,j)},X^{(k,\ell)}]_s^c &= \left[ \Big(\int_{(0,\cdot]} X_{u-} dL_u\Big)^{(i,j)}, \Big(\int_{(0,\cdot]} X_{u-} dL_u\Big)^{(k,\ell)} \right]_s^c\\
		&= \left[ \sum_{m=1}^d \int_{(0,\cdot]} X_{u-}^{(i,m)} dL_u^{(m,j)}, \sum_{n=1}^d \int_{(0,\cdot]} X_{u-}^{(k,n)} dL_u^{(n,\ell)}  \right]_s^c\\
		&= \sum_{m,n=1}^d \left[ \int_{(0,\cdot]} X_{u-}^{(i,m)} dL_u^{(m,j)}, \int_{(0,\cdot]} X_{u-}^{(k,n)} dL_u^{(n,\ell)}  \right]_s^c\\
		&= \sum_{m,n=1}^d  \int_{(0,s]} X_{u-}^{(i,m)} X_{u-}^{(k,n)} d[L^{(m,j)}, L^{(n,\ell)}]_u^c\\
		&= \sum_{m,n=1}^d \sigma_{(m,j),(n,\ell)}  \int_{(0,s]} X_{u-}^{(i,m)} X_{u-}^{(k,n)} du.
	\end{align*}
	Further, iterating \eqref{eq-detderivative} via \cite[Eq. (10.7.11)]{Bernstein} yields
	\begin{align*}\lefteqn{\frac{\partial^2 f}{\partial x^{(i,j)} \partial x^{(k,\ell)}}(X_{s-})  =  (-1)^{i+j}	\frac{\partial f}{\partial x^{(k,\ell)}} \det(X_{s-,[i;j]} )}\\
		& = \begin{cases}
			0, & i=k,\,\text{or}\, j=\ell,\\
			(-1)^{i+j} ((X_{s-,[i;j]})^{\adj})^{(\ell',k')}  = (-1)^{i+j+k'+\ell'} \det(X_{s-,[i,k;j,\ell]}), & \text{else},
	\end{cases} \end{align*} 
	where the subscript $[i,k;j,\ell]$ denotes deletion of the $i$-th and $k$-th row as well as the $j$-th and $\ell$-th column. Moreover, we set $k'= k$ for $k<i$, and $k'=k-1$ for $k>i$, and define $\ell'$ analogously w.r.t. $j$. Thus
	\begin{align}
		\lefteqn{\sum_{i,j,k,\ell=1}^d \int_{(0,t]} \frac{\partial^2 f}{\partial x^{(i,j)} \partial x^{(k,\ell)}} (X_{s-}) d[X^{(i,j)},X^{(k,\ell)}]_s^c }\nonumber \\
		&= \sum_{\substack{i,j,k,\ell,m,n =1\\ i\neq k, j \neq \ell}}^d (-1)^{i+j+k'+\ell'}  \sigma_{(m,j),(n,\ell)} \int_{(0,t]} \det(X_{s-,[i,k;j,\ell]}) X_{s-}^{(i,m)} X_{s-}^{(k,n)} ds \nonumber \\
		&= \sum_{\substack{j,\ell,m,n =1\\ j \neq \ell}}^d  \sigma_{(m,j),(n,\ell)}   \int_{(0,t]}  (-1)^{j +\ell'}  \sum_{\substack{i,k=1\\i\neq k}}^d (-1)^{i+k'} \det(X_{s,[i,k;j,\ell]}) X_{s}^{(i,m)} X_{s}^{(k,n)} ds. \label{eq-itodetterm2help1}
	\end{align}
	To simplify this sum, first of all observe that $\det(X_{s,[i,k;j,\ell]})= \det(X_{s,[k,i;j,\ell]})$ such that we can rewrite the inner sum in \eqref{eq-itodetterm2help1} as
	\begin{align*}
		\lefteqn{\sum_{\substack{i,k=1\\i\neq k}}^d (-1)^{i+k'} \det(X_{s,[i,k;j,\ell]}) X_{s}^{(i,m)} X_{s}^{(k,n)}}\\
		&= \sum_{i=1}^{d-1} \sum_{k=i+1}^d (-1)^{i+k-1} \det(X_{s,[i,k;j,\ell]})\underbrace{\left( X_s^{(i,m)} X_s^{(k,n)} - X_s^{(k,m)} X_s^{(i,n)} \right)}_{=0, \text{ if } m=n}.
	\end{align*}
	Thus, excluding all terms with $m=n$, we further note that by double Laplace expansion along the $m$-th and $n$-th column, again using $\det(X_{s,[i,k;j,\ell]})= \det(X_{s,[k,i;j,\ell]})$,
	\begin{align*}
		\sum_{\substack{i,k=1\\i\neq k}}^d (-1)^{i+k'+j+\ell'} \det(X_{s,[i,k;j,\ell]}) X_{s}^{(i,m)} X_{s}^{(k,n)} &= \begin{cases} 
			\det(X_s),  & j=m, \ell=n\\
			- \det(X_s), & j=n, \ell=m.
		\end{cases}
	\end{align*}
	Therefore, \eqref{eq-itodetterm2help1} equals
	\begin{align}
		\lefteqn{\sum_{\substack{j,\ell,m,n =1\\ j \neq \ell, m\neq n}}^d  \sigma_{(m,j),(n,\ell)}   \int_{(0,t]}  (-1)^{j +\ell'}  \sum_{\substack{i,k=1\\i\neq k}}^d (-1)^{i+k'} \det(X_{s,[i,k;j,\ell]}) X_{s}^{(i,m)} X_{s}^{(k,n)} ds} \nonumber \\
		&= \bigg(\sum_{\substack{m,n=1\\m\neq n}}^d \sigma_{(m,m),(n,n)}  - \sum_{\substack{m,n=1\\m\neq n}}^d \sigma_{(m,n),(n,m)} \bigg) \int_{(0,t]} D_s ds \label{eq-itodetterm2help2}  \\
		&\qquad+ \sum_{\substack{j,\ell,m,n =1\\ j \neq \ell, m\neq n\\
				\lnot[(j=m) \wedge (\ell=n)]\\ \lnot[(j=n)\wedge (\ell=m)]}}^d  \sigma_{(m,j),(n,\ell)} (-1)^{j +\ell'}   \int_{(0,t]}   \sum_{\substack{i,k=1\\i\neq k}}^d (-1)^{i+k'} \det(X_{s,[i,k;j,\ell]}) X_{s}^{(i,m)} X_{s}^{(k,n)} ds. \label{eq-itodetterm2help3}
	\end{align}
	The last term, i.e. \eqref{eq-itodetterm2help3}, vanishes. To see this consider first the case that $m=j$, thus implying $n\neq \ell$. The resulting summands in \eqref{eq-itodetterm2help3} are of the form
	$$	\sigma_{(m,m),(n,\ell)} (-1)^{m +\ell'}   \int_{(0,t]}   \sum_{\substack{i,k=1\\i\neq k}}^d (-1)^{i+k'} \det(X_{s,[i,k;m,\ell]}) X_{s}^{(i,m)} X_{s}^{(k,n)} ds$$
	with
	\begin{align*}
		\lefteqn{\sum_{\substack{i,k=1\\i\neq k}}^d (-1)^{i+k'} \det(X_{s,[i,k;m,\ell]}) X_{s}^{(i,m)} X_{s}^{(k,n)}} \\
		&	 = \sum_{i=1}^d (-1)^i X_s^{(i,m)} \sum_{k'=1}^{d-1} (-1)^{k'} \det\big( (X_{s,[i;m]})_{[k';\ell']}\big) (X_{s,[i;m]})^{(k', n')}
		= 0,
	\end{align*}
	due to \cite[Eq. (2.7.17)]{Bernstein} applied on the transpose of $X_{s,[i;m]}$. By symmetry and recalling that $\det(X_{s,[i,k;j,\ell]})= \det(X_{s,[k,i;j,\ell]})$, all terms corresponding to the cases $n=\ell$, or $n=j$, or $\ell=m$ cancel out likewise due to \cite[Eq. (2.7.17)]{Bernstein}. Therefore only terms with all indices $m,j,n,\ell$ pairwise different remain to be considered, and clearly these only appear for $d\geq 4$. In this situation, w.l.o.g. assuming $m<j<\ell$,  we obtain
	\begin{align*}
		\lefteqn{\sigma_{(m,j),(n,\ell)} (-1)^{j +\ell'}  \sum_{\substack{i,k=1\\i\neq k}}^d (-1)^{i+k'} \det(X_{s,[i,k;j,\ell]}) X_{s}^{(i,m)} X_{s}^{(k,n)} }\\
		&= \sigma_{(m,j),(n,\ell)} (-1)^{j +\ell}  \sum_{k=1}^d  (-1)^{k} X_{s}^{(k,n)} \sum_{i'=1}^{d-1} (-1)^{i'} \det(X_{s,[k;\ell]})_{[i';j]}) (X_{s,[k;\ell]})^{(i',m)},
	\end{align*}
	and again by \cite[Eq. (2.7.17)]{Bernstein} the inner sum, and hence the whole term, equals zero. \\ 
	Finally, according to \eqref{eq-itodet}, we add up \eqref{eq-itodetterm1}, $\frac12$ times \eqref{eq-itodetterm2help2}, and \eqref{eq-itodetterm3}, to obtain
	\begin{align*}
		D_t&= 1 + \int_{(0,t]} D_{s-} d \trace(L_s) + \frac12 \sum_{\substack{m,n=1\\m\neq n}}^d (\sigma_{(m,m),(n,n)} - \sigma_{(m,n),(n,m)})  \int_{(0,t]} D_{s-} ds + \\
		&\quad + \sum_{0<s\leq t} D_{s-} \left(\det(\Id + \Delta L_s)  - 1 - \trace(\Delta L_s)\right)
	\end{align*}
	since $D_0=\det(X_0)=\det\Id = 1.$ This proves the statement via \eqref{SDE}.	
\end{proof}

Having found out that the determinant process is a one-dimensional stochastic exponential, we can now easily derive its explicit representation as presented in the next corollary.

\begin{corollary} \label{cor-detprocessexplicit}
		Let $L=(L_t)_{t\geq 0}$ be a Lévy process in $\RR^{d\times d}$ fulfilling \eqref{eq-nonsingular} and let $D=(D_t)_{t\geq 0}$ be the determinant process of its stochastic exponential defined in \eqref{eq-detprocess}. Then 
	\begin{align*}
		D_t&= \exp\Bigg(\trace(L_t) -  \frac12 \sum_{m,n=1}^d \sigma_{(m,n),(n,m)} \cdot t  \Bigg) \prod_{0<s\leq t} \det\big((\Id+\Delta L_s) e^{-\Delta L_s}\big), \quad t\geq 0.
	\end{align*}
\end{corollary}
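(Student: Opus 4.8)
The starting point is the closed-form (Doléans--Dade) formula for one-dimensional stochastic exponentials, cf. \cite[Thm.~II.37]{protter}: for a real-valued semimartingale $\check L$ with $\check L_0 = 0$,
$$ \cE(\check L)_t = \exp\Big(\check L_t - \tfrac12 [\check L^c,\check L^c]_t\Big)\prod_{0<s\leq t}(1+\Delta \check L_s)\,e^{-\Delta \check L_s}, $$
the product converging absolutely almost surely. By Theorem~\ref{thm-detprocess} we already know $D_t = \cE(\check L)_t$ for the real-valued Lévy process $\check L$ written out there, so the whole task reduces to rewriting the right-hand side above in terms of $L$ alone.

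The first step is to read off the continuous and the jump parts of $\check L$. Writing $\check L_t = \trace(L_t) + c\,t + J_t$ with $c = \tfrac12\sum_{m\neq n}(\sigma_{(m,m),(n,n)} - \sigma_{(m,n),(n,m)})$ and $J_t = \sum_{0<s\leq t}(\det(\Id+\Delta L_s) - 1 - \trace(\Delta L_s))$, the terms $c\,t$ and $J_t$ are of finite variation, so the continuous local martingale part is $\check L^c = \trace(L)^c = \sum_{j=1}^d (L^{(j,j)})^c$, whence $[\check L^c,\check L^c]_t = \sum_{m,n=1}^d \sigma_{(m,m),(n,n)}\,t$. For the jumps, the $-\trace(\Delta L_s)$ appearing inside $J_t$ cancels the jump $\trace(\Delta L_s)$ of $\trace(L)$, so $\Delta\check L_s = \det(\Id+\Delta L_s) - 1$ and hence $1+\Delta\check L_s = \det(\Id+\Delta L_s)$, which is nonzero by \eqref{eq-nonsingular}.

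Substituting these into the Doléans--Dade formula, the second step is to collect the terms linear in $t$ inside the exponential. Their total coefficient is
$$ \tfrac12\sum_{\substack{m,n=1\\ m\neq n}}^d\!\big(\sigma_{(m,m),(n,n)}-\sigma_{(m,n),(n,m)}\big) - \tfrac12\sum_{m,n=1}^d\sigma_{(m,m),(n,n)} = -\tfrac12\sum_{m=1}^d\sigma_{(m,m),(m,m)} - \tfrac12\!\sum_{\substack{m,n=1\\ m\neq n}}^d\!\sigma_{(m,n),(n,m)} = -\tfrac12\sum_{m,n=1}^d\sigma_{(m,n),(n,m)}, $$
where the off-diagonal $\sigma_{(m,m),(n,n)}$ terms cancel against part of the second sum, and the leftover diagonal piece $-\tfrac12\sum_m\sigma_{(m,m),(m,m)}$ merges with the $m\neq n$ sum of $\sigma_{(m,n),(n,m)}$ into the full double sum. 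This already yields the prefactor $\exp\big(\trace(L_t) - \tfrac12\sum_{m,n}\sigma_{(m,n),(n,m)}\,t\big)$, leaving over the residual factor $\exp(J_t) = \prod_{0<s\leq t}e^{\det(\Id+\Delta L_s)-1-\trace(\Delta L_s)}$.

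The final step is to merge this residual factor with the jump product $\prod_{0<s\leq t}(1+\Delta\check L_s)e^{-\Delta\check L_s} = \prod_{0<s\leq t}\det(\Id+\Delta L_s)\,e^{-(\det(\Id+\Delta L_s)-1)}$: at each jump time the two exponentials telescope to $e^{-\trace(\Delta L_s)}$, and since $\det(e^{-\Delta L_s}) = e^{-\trace(\Delta L_s)}$ this collapses the product to $\prod_{0<s\leq t}\det\big((\Id+\Delta L_s)e^{-\Delta L_s}\big)$, which is the asserted formula. There is no genuinely hard step here; the only things deserving a word of care are the bookkeeping of the Gaussian-covariance terms carried out above, and the observation that the rearrangement of the jump contributions is legitimate because all the relevant products and sums converge absolutely almost surely (inherited from the Doléans--Dade formula, equivalently from $\check L$ being a Lévy process with $\sum_{0<s\leq t}(\Delta\check L_s)^2<\infty$).
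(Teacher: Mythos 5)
Your proposal is correct and follows the same route as the paper: apply the one-dimensional Dol\'eans--Dade formula from \cite[Thm.~II.37]{protter} to the representation $D_t=\cE(\check L)_t$ of Theorem~\ref{thm-detprocess}, using $\Delta\check L_s=\det(\Id+\Delta L_s)-1$, $[\check L^c,\check L^c]_t=\sum_{m,n}\sigma_{(m,m),(n,n)}t$ and $\det(e^{-\Delta L_s})=e^{-\trace(\Delta L_s)}$. Your bookkeeping of the Gaussian covariance terms and the merging of the jump contributions match the paper's (much terser) computation exactly.
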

\begin{proof}
	The formula for $D_t$ is an immediate consequence of Theorem~\ref{thm-detprocess} and the explicit formula of the one-dimensional stochastic exponential as given e.g. in  \cite[Thm. II.37]{protter}, and upon noticing that 
	$$\Delta \check{L}=\det(\Id + \Delta L)-1, \quad e^{-\trace(\Delta L)} = \det(e^{-\Delta L}),$$
	and
	\begin{align*}
		[\trace(L^c),\trace(L^c)]_t &= \left[ \sum_{m=1}^d (L^{(m,m)})^c, \sum_{n=1}^d (L^{(n,n)})^c \right]_t = \sum_{m,n=1}^d  \left[ (L^{(m,m)})^c, (L^{(n,n)})^c \right]_t\\
		&= \sum_{m,n=1}^d \sigma_{(m,m),(n,n)} \cdot t .\qedhere
	\end{align*}
\end{proof}

The special linear group $\SL$ is the subgroup of matrices in $\GL$ with determinant~$1$. Interestingly, Corollary~\ref{cor-detprocessexplicit} allows for a direct classification of all Lévy processes whose exponential functional is in $\SL$ that we state as follows.

\begin{corollary}\label{cor-speciallineargroup}
	Let $L=(L_t)_{t\geq 0}$ be a Lévy process in $\RR^{d\times d}$ fulfilling \eqref{eq-nonsingular} and let $X=(X_t)_{t\geq 0}$ denote its stochastic exponential. Then $X_t\in \SL$ for all $t\geq 0$ if and only if
	\begin{enumerate}
		\item the Brownian components $B_{L,t}$ of $L$ have trace $0$, i.e. $\sum_{i=1}^d B_{L,t}^{(i,i)} =0$ a.s. for all $t\geq 0$,
		\item the trace of the drift of $L^c$ fulfils $2 \sum_{i=1}^d (\gamma_L^0)^{(i,i)} =  \sum_{m,n=1}^d \sigma_{(m,n),(n,m)},$ and
		\item the Lévy measure has support $\supp(\nu_L)\subseteq \{a\in \RR^{d\times d} \setminus \{0\} : \det(\Id+a)=1 \}$.
	\end{enumerate}		
\end{corollary}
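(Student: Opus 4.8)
The plan is to derive the characterisation directly from the explicit representation of the determinant process in Corollary~\ref{cor-detprocessexplicit}. Recall that
\[
D_t = \exp\Bigl(\trace(L_t) - \tfrac12 \sum_{m,n=1}^d \sigma_{(m,n),(n,m)}\, t\Bigr) \prod_{0<s\leq t} \det\bigl((\Id+\Delta L_s) e^{-\Delta L_s}\bigr),
\]
and that $X_t \in \SL$ for all $t\ge 0$ is by definition equivalent to $D_t = 1$ a.s.\ for all $t\ge 0$. So the task is to translate the identity $D_t \equiv 1$ into conditions on the characteristic triplet $(\Sigma_L, \gamma_L, \nu_L)$. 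I would first split $\trace(L_t)$ into its continuous (Gaussian plus drift) part and its jump part using the Lévy--Itô decomposition: $\trace(L_t) = \trace(B_{L,t}) + \trace(\gamma_L^0)\, t + \sum_{0<s\le t}\trace(\Delta L_s)$ (the last sum understood in the compensated sense if the jump activity is infinite; one argues first for the finite-variation-of-jumps case and then by a limiting/compensation argument, or invokes that $D$ is itself a one-dimensional Lévy exponential whose characteristic triplet is read off from Theorem~\ref{thm-detprocess}). Combining, $D_t$ is the exponential of the Lévy process
\[
\check L_t = \trace(B_{L,t}) + \Bigl(\trace(\gamma_L^0) - \tfrac12\sum_{m,n=1}^d \sigma_{(m,n),(n,m)}\Bigr)t + \sum_{0<s\le t}\bigl(\det(\Id+\Delta L_s)-1\bigr) - (\text{compensator}),
\]
so $X_t\in\SL$ for all $t$ iff $\check L_t = 0$ a.s.\ for all $t$, iff $\check L$ is the zero process, iff its Gaussian part, drift and Lévy measure all vanish.

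The three conditions then fall out by matching components. \textbf{(i)} The Gaussian part of $\check L$ is $\trace(B_{L,t}) = \sum_{i=1}^d B_{L,t}^{(i,i)}$; a (one-dimensional) Brownian motion is identically zero iff its variance parameter is zero, which is exactly $\sum_{i=1}^d B_{L,t}^{(i,i)} = 0$ a.s. \textbf{(iii)} The jump part of $\check L$ has jumps $\Delta\check L_s = \det(\Id+\Delta L_s)-1$; for the pure-jump part to vanish we need every jump of $L$ to satisfy $\det(\Id+\Delta L_s)=1$, i.e.\ $\supp(\nu_L)\subseteq\{a\in\RR^{d\times d}\setminus\{0\} : \det(\Id+a)=1\}$ (here the non-singularity assumption~\eqref{eq-nonsingular} guarantees $\det(\Id+\Delta L_s)\neq 0$, so "jump equals zero" genuinely forces the value~$1$, not some other constant). \textbf{(ii)} Once (i) and (iii) hold, the Gaussian and jump parts of $\check L$ are zero, so $\check L_t = \bigl(\trace(\gamma_L^0) - \tfrac12\sum_{m,n}\sigma_{(m,n),(n,m)}\bigr)t$, which vanishes iff $2\sum_{i=1}^d(\gamma_L^0)^{(i,i)} = \sum_{m,n=1}^d\sigma_{(m,n),(n,m)}$. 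The converse direction — that the three conditions together imply $D_t\equiv 1$ — is immediate by substituting them back into the formula of Corollary~\ref{cor-detprocessexplicit}, since then the exponent and each factor $\det((\Id+\Delta L_s)e^{-\Delta L_s})$ reduce appropriately; one uses $\det((\Id+\Delta L_s)e^{-\Delta L_s}) = 1\cdot e^{-\trace(\Delta L_s)}$ under condition (iii), and these exponential factors are exactly compensated by the jump part of $\trace(L_t)$.

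The main obstacle I anticipate is the bookkeeping when $\nu_L$ has infinite activity: the naive factorisation of $D_t$ into a drift term times an absolutely convergent product over jumps need not make sense termwise, and one has to argue that the product $\prod_{0<s\le t}\det((\Id+\Delta L_s)e^{-\Delta L_s})$ together with the compensation built into $\trace(L_t)$ yields a well-defined Lévy process — which is precisely what Theorem~\ref{thm-detprocess} already establishes. So the cleanest route is to avoid re-deriving convergence and instead argue at the level of $\check L$: $X_t\in\SL$ for all $t$ iff $D_t = \cE(\check L)_t \equiv 1$ iff $\check L \equiv 0$, and then read off the vanishing of the Gaussian coefficient, the drift, and the Lévy measure of $\check L$ from the explicit triplet in Theorem~\ref{thm-detprocess}. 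A minor subtlety to flag explicitly: the equivalence "$\cE(\check L)\equiv 1 \iff \check L\equiv 0$" uses that $\cE(\check L)=1$ forces $\Delta\check L=0$ everywhere and no drift/Gaussian part, which is standard for the one-dimensional Doléans-Dade exponential but worth one sentence.
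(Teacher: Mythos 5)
Your proposal is correct and takes essentially the route the paper intends: the corollary is stated as a direct consequence of Corollary~\ref{cor-detprocessexplicit} (equivalently, of the triplet of $\check{L}$ from Theorem~\ref{thm-detprocess}), namely $X_t\in\SL$ for all $t$ iff $D_t=\cE(\check{L})_t\equiv 1$ iff $\check{L}\equiv 0$, and the three conditions are exactly the vanishing of the Gaussian part, the drift and the Lévy measure of $\check{L}$. Only a cosmetic remark: your displayed formula for $\check{L}_t$ mixes the drift of the ordinary exponent from Corollary~\ref{cor-detprocessexplicit} with the jumps of the stochastic logarithm from Theorem~\ref{thm-detprocess}, but your final argument at the level of the characteristic triplet is the clean and correct one.
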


\subsection{Asymptotic behaviour of the determinant process}\label{sectdetLLN}

It can be expected that the determinant of the stochastic exponential grows exponentially. We therefore start our considerations with the following lemma that determines the structure of the logarithm of $D$. 

\begin{lemma}\label{lem-hatD}
		Let $L=(L_t)_{t\geq 0}$ be a Lévy process in $\RR^{d\times d}$ fulfilling \eqref{eq-nonsingular} and let $D=(D_t)_{t\geq 0}$ be the determinant process of its stochastic exponential defined in \eqref{eq-detprocess}. 
	Then $(\check{D}_t)_{t\geq 0}$ given by
	\begin{align} \label{eq-hatD}
		\check{D}_t:=	\log |D_t| &= \trace(L_t^c) - \frac12 \sum_{m,n=1}^d \sigma_{(m,n),(n,m)} \cdot t + \sum_{0<s\leq t} \log |\det(\Id + \Delta L_s)|, \quad t\geq 0,
	\end{align}
	is an (additive) Lévy process in $\RR$ with characteristic triplet $(\sigma_{\check{D}}, \gamma_{\check{D}}, \nu_{\check{D}})$ given by
	\begin{align*}
		\sigma_{\check{D}}&= \sum_{m,n=1}^d \sigma_{(m,m),(n,n)} \\
		\gamma_{\check{D}}&= \trace(\gamma_L)  - \frac12 \sum_{m,n=1}^d \sigma_{(m,n),(n,m)}  \\
		& \qquad + \int_S \Big( (\log|\det(\Id+x)|) \mathds{1}_{\{|\det(\Id+x)| \in [e^{-1},e]\}}  -\trace(x)   \mathds{1}_{\{\|\vect x \|\leq 1\}}   \Big)\nu_L(dx) \\
		\nu_{\check{D}}&= \Phi(\nu_L)|_{\RR\setminus \{0\}},
	\end{align*}
	where $S=\RR^{d\times d}\setminus \big(\{x \in \RR^{d\times d}: \det(\Id+x) =0\}\cup \{0\} \big)$, and for the measure transform
	$$\Phi: S \to \RR\setminus \{0\}, \quad x\mapsto \log|\det(\Id+x)|.$$
	Moreover, we have $\EE|\check{D}_1|<\infty$ if and only if
	\begin{equation} \label{eq-hatDmeancond}
		\int_S  (\log|\det(\Id+x)|) \mathds{1}_{\{|\det(\Id+x)| \in (0,e^{-1})\cup (e,\infty)\}}\nu_L(dx)<\infty,
	\end{equation}
	and, if this is the case,
	\begin{equation} \label{eq-hatDmean}
		\EE[\check{D}_1] = \trace(\gamma_1)  - \frac12 \sum_{m,n=1}^d \sigma_{(m,n),(n,m)}  + \int_S \Big( \log|\det(\Id+x)|  -\trace(x)   \mathds{1}_{\{|\vect x |\leq 1\}}   \Big)\nu_L(dx).
	\end{equation}
\end{lemma}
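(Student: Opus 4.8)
The plan is to recognise $\check D_t = \log|D_t|$ as a deterministic nonlinear transformation of the driving Lévy process composed with a time-change/compensation, and then read off its characteristics directly from those of $L$.

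\textbf{Step 1: Decompose $\check D$ into a continuous and a jump part.} Starting from the explicit formula in Corollary~\ref{cor-detprocessexplicit}, take $\log|\cdot|$ to obtain
$$\check D_t = \trace(L_t^c) + \trace(L_t^d) - \tfrac12 \sum_{m,n} \sigma_{(m,n),(n,m)} t + \sum_{0<s\le t}\big(\log|\det(\Id+\Delta L_s)| - \trace(\Delta L_s)\big),$$
which, after cancelling $\trace(L_t^d) = \sum_{0<s\le t}\trace(\Delta L_s)$ (valid as a sum over jumps), collapses to \eqref{eq-hatD}. So $\check D$ is manifestly the sum of (i) a continuous Gaussian part $\trace(L_t^c)$ with a deterministic drift, and (ii) a pure-jump part $\sum_{0<s\le t}\log|\det(\Id+\Delta L_s)|$. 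Each jump $\Delta L_s = x$ of $L$ produces a jump $\Phi(x) = \log|\det(\Id+x)|$ of $\check D$, and by the non-singularity assumption \eqref{eq-nonsingular} this is always finite (no jump hits the set $\{\det(\Id+x)=0\}$ a.s.). This already identifies $\nu_{\check D}$ as the image of $\nu_L$ under $\Phi$, restricted to $\RR\setminus\{0\}$ to discard the jumps $x$ with $\det(\Id+x)=\pm1$.

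\textbf{Step 2: Verify $\check D$ is a Lévy process and compute its triplet.} Stationarity and independence of increments of $\check D$ follow because both the continuous part and the jump sum are deterministic functionals of the increments of $L$; càdlàg paths are clear. For the triplet, the Gaussian variance is $\sigma_{\check D} = \var(\trace(L_1^c)) = \sum_{m,n}\sigma_{(m,m),(n,n)}$ by bilinearity of covariance (only the ``diagonal-diagonal'' covariances of $L$ enter). For the Lévy measure, $\nu_{\check D} = \Phi(\nu_L)|_{\RR\setminus\{0\}}$; one should note that $\Phi(\nu_L)$ is a genuine Lévy measure, i.e. $\int (1\wedge u^2)\,\nu_{\check D}(du) < \infty$, which follows since near $0$ one has $|\log|\det(\Id+x)|| \lesssim \|x\|$ (from $\det(\Id+x) = 1 + \trace(x) + O(\|x\|^2)$), so $\Phi$ maps small jumps of $L$ to small jumps of $\check D$ of comparable size, and the standard integrability $\int(1\wedge\|x\|^2)\nu_L(dx)<\infty$ transfers. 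Finally, $\gamma_{\check D}$ is obtained by matching the standard truncation $\mathds1_{\{|u|\le 1\}}$ in the characteristic exponent of $\check D$ against the truncation $\mathds1_{\{\|\vect x\|\le 1\}}$ coming from $L$: the drift is $\trace(\gamma_L)$ (from $\trace$ of the drift of $L$) minus the deterministic $\tfrac12\sum\sigma_{(m,n),(n,m)}$, plus the correction term $\int_S\big((\log|\det(\Id+x)|)\mathds1_{\{|\det(\Id+x)|\in[e^{-1},e]\}} - \trace(x)\mathds1_{\{\|\vect x\|\le1\}}\big)\nu_L(dx)$, which accounts for the mismatch between ``small jumps of $\check D$'' and ``small jumps of $L$''. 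This integral converges because both truncation regions agree near $x=0$ up to higher order, and the integrand is $O(\|x\|^2)$ there.

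\textbf{Step 3: The moment criterion.} For $\EE|\check D_1|<\infty$ apply the standard Lévy-process moment characterisation (e.g. \cite[Thm.~25.3]{sato2nd}): $\EE|\check D_1|<\infty$ iff $\int_{|u|>1}|u|\,\nu_{\check D}(du)<\infty$, which by the change of variables $u = \Phi(x)$ is exactly \eqref{eq-hatDmeancond}. When this holds, $\EE[\check D_1]$ equals $\gamma_{\check D}$ plus $\int_{|u|>1} u\,\nu_{\check D}(du)$; substituting and recombining the truncation indicators, the $\mathds1_{\{|\det(\Id+x)|\in[e^{-1},e]\}}$ and $\mathds1_{\{|\det(\Id+x)|\notin[e^{-1},e]\}}$ pieces reassemble into the untruncated $\log|\det(\Id+x)|$, yielding \eqref{eq-hatDmean}. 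The main obstacle I anticipate is purely bookkeeping: carefully managing the two different truncation regions (the ``native'' one $\|\vect x\|\le1$ inherited from $L$ versus the ``natural'' one $|\det(\Id+x)|\in[e^{-1},e]$ for $\check D$) and checking the corresponding integrals converge, rather than any conceptual difficulty — the estimate $\log|\det(\Id+x)| = \trace(x) + O(\|x\|^2)$ near the origin does all the real work and should be stated as a lemma or invoked explicitly.
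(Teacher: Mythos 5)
Your proposal is correct and follows essentially the same route as the paper: read the formula off Corollary~\ref{cor-detprocessexplicit}, identify the Gaussian part $\trace(L^c)$ and the jump map $\Phi$, obtain the drift by reconciling the truncation region $\{\|\vect x\|\le 1\}$ inherited from $L$ with the native one $\{|\det(\Id+x)|\in[e^{-1},e]\}$ for $\check D$, and invoke the standard moment criterion of \cite[Ex.~25.12]{sato2nd}. The only cosmetic difference is that the paper organises the drift computation via the pathwise L\'evy--It\^o decomposition rather than by matching characteristic exponents, and it cites \cite[Lemma 3.4]{BLM} for the L\'evy property instead of arguing it directly.
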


\begin{proof}
	The formula for $\check{D}_t$ is immediate from Corollary~\ref{cor-detprocessexplicit}, while the fact that $(\check{D}_t)_{t\geq 0}$ is a Lévy process has been shown in \cite[Lemma 3.4]{BLM}.\\
	To derive the characteristic triplet of $\check{D}$ we rely on the Lévy-It\^o decomposition, cf. \cite[Thm. 19.2]{sato2nd}, and observe first that the Brownian motion components of $\check{D}$ and $L$ satisfy
	\begin{equation} \label{eq-hatDBrownian} B_{\check{D},t} = \trace(B_{L,t})= \sum_{m=1}^d B_{L,t}^{(m,m)},\end{equation}
	from which we obtain
	$$\sigma_{\check{D}}= \var( B_{\check{D},1}) = \sum_{m,n=1}^d \sigma_{(m,m),(n,n)},$$
	in analogy to the computation in the proof of Corollary~\ref{cor-detprocessexplicit}.\\
	Further, from \eqref{eq-hatD} we immediately obtain
	\begin{equation} \label{eq-hatDjump} \Delta \check{D}_t = \log |\det(\Id+\Delta L_t)|, \quad t\geq 0,\end{equation}
	proving the stated form of the Lévy measure of $\check{D}$. Lastly, the Lévy-Itô decomposition of $\check{D}$ yields
	\begin{align*}
		\gamma_{\check{D}} &= \check{D}_1 - B_{\check{D},1} - \lim_{\varepsilon\downarrow0} \Bigg( \sum_{\substack{0<s\leq 1\\ |\Delta \check{D}_s|>\varepsilon}} \Delta \check{D}_s - \int_{ |x|\in (\varepsilon,1]} x \nu_{\check{D}}(dx)  \Bigg) \\
		&= \trace(L_1^c)- B_{\check{D},1} - \frac12 \sum_{m,n=1}^d \sigma_{(m,n),(n,m)}  \\
		&\qquad + \lim_{\varepsilon\downarrow0} \Bigg(  \sum_{\substack{0<s\leq 1\\ |\log |\det(\Id + \Delta L_s)||\leq \varepsilon}} \log |\det(\Id + \Delta L_s)| + \int_{ |x|\in (\varepsilon,1]} x \nu_{\check{D}}(dx) \Bigg),
	\end{align*}
	due to \eqref{eq-hatD} and \eqref{eq-hatDjump}. Setting $E_\varepsilon:=\{x\in S:\, |\log |\det(\Id + x)||\leq \varepsilon \}$ and again applying the Lévy-Itô decomposition (now of $L^c$) as well as \eqref{eq-hatDBrownian} we observe that
	\begin{align*}
		\gamma_{\check{D}} &= \trace(\gamma_L)  - \frac12 \sum_{m,n=1}^d \sigma_{(m,n),(n,m)}  \\
		&\qquad + \lim_{\varepsilon\downarrow0} \Bigg(\sum_{\substack{0<s\leq 1\\  \Delta L_s \in E_\varepsilon}} \log |\det(\Id + \Delta L_s)| + \int_{ |x|\in (\varepsilon,1]} x \nu_{\check{D}}(dx)  - \int_{\|\vect x \|\in(\varepsilon,1]} \trace(x) \nu_L(dx)  \Bigg)\\
		&= \trace(\gamma_L)  - \frac12 \sum_{m,n=1}^d \sigma_{(m,n),(n,m)} + \int_S  (\log|\det(\Id+x)|) \mathds{1}_{\{|\det(\Id+x)|=1\}}   \nu_L(dx)\\
		&\qquad + \int_S \bigg( (\log|\det(\Id+x)|) \mathds{1}_{\{|\det(\Id+x)| \in [e^{-1},e]\}} - \trace(x) \mathds{1}_{\{\|\vect x \|\leq 1\}} \bigg)\nu_L(dx) ,
	\end{align*}
	which gives the stated formula for $\gamma_{\check{D}}$.\\
	The condition \eqref{eq-hatDmeancond} for $\EE|\check{D}_1|<\infty$ now follows directly from the given form of $\nu_{\check{D}}$ and \cite[Ex. 25.12]{sato2nd}, while the mean of $\check{D_1}$ can by computed by \cite[Ex. 25.12]{sato2nd} as
	\begin{align*}
		\EE[\check{D}_1]&= \gamma_{\check{D}} + \int_{|x|>1} x \nu_{\check{D}}(dx)\\
		&= \gamma_{\check{D}} + \int_S \log|\det(\Id+x)| \mathds{1}_{\{|\det(\Id+x)|\in (0,e^{-1})\cup (e,\infty)\}} \nu_L(dx)\\
		&= \trace(\gamma_L)  - \frac12 \sum_{m,n=1}^d \sigma_{(m,n),(n,m)}  + \int_S \Big( \log|\det(\Id+x)|  -\trace(x)   \mathds{1}_{\{\|\vect x \|\leq 1\}}   \Big)\nu_L(dx). \qedhere
	\end{align*}
\end{proof}

The long-time behaviour of one-dimensional, additive Lévy processes is well understood and in particular it is now immediate that $\check{D}$ either drifts to $\pm \infty$ a.s., or oscillates. As the characteristics of $\check{D}$ are known, eclectic related results e.g. from \cite[Sections 36, 37 and 48]{sato2nd} can now be applied to derive LLNs and LILs for $\check{D}$. For the sake of clarity and shortness of our exposition we restrict ourselves here on the following exemplary theorem.

\begin{theorem}\label{thm-SLLN-det}
		Let $L=(L_t)_{t\geq 0}$ be a Lévy process in $\RR^{d\times d}$ fulfilling \eqref{eq-nonsingular} and let $D=(D_t)_{t\geq 0}$ be the determinant process of its stochastic exponential defined in \eqref{eq-detprocess}. 
	\begin{enumerate}
		\item If \eqref{eq-hatDmeancond} holds, then a.s.
		\begin{align*} \lefteqn{\lim_{t\to \infty} t^{-1} \log |D_t|}\\ &= \trace(\gamma_L)  - \frac12 \sum_{m,n=1}^d \sigma_{(m,n),(n,m)}  + \int_S \Big( \log|\det(\Id+x)|  -\trace(x)   \mathds{1}_{\{\|\vect x \|\leq 1\}}   \Big)\nu_L(dx).\end{align*} 
		\item If \eqref{eq-hatDmeancond} fails, then a.s.
		$$\limsup_{t\to \infty} t^{-1} |\log |D_t||=\infty.$$
	\end{enumerate}
\end{theorem}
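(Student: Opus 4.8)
The plan is to reduce both statements to the one-dimensional Lévy process $\check D_t = \log|D_t|$ identified in Lemma~\ref{lem-hatD}, and then to invoke the classical strong law of large numbers for real-valued Lévy processes. Recall from Lemma~\ref{lem-hatD} that $\check D$ is a Lévy process on $\R$ with explicitly known characteristic triplet, that $\EE|\check D_1|<\infty$ holds precisely when \eqref{eq-hatDmeancond} holds, and that in that case $\EE[\check D_1]$ is given by the right-hand side of \eqref{eq-hatDmean}. So the entire content of Theorem~\ref{thm-SLLN-det} is contained in this identification together with a single citation.

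For part (1), I would assume \eqref{eq-hatDmeancond}, so that $\EE|\check D_1|<\infty$. By the SLLN for one-dimensional Lévy processes (see e.g.\ \cite[Thm.~36.5]{sato2nd}), $t^{-1}\check D_t \to \EE[\check D_1]$ a.s.\ as $t\to\infty$; substituting \eqref{eq-hatDmean} yields the stated limit for $t^{-1}\log|D_t|$.

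For part (2), I would assume \eqref{eq-hatDmeancond} fails, so $\EE|\check D_1|=\infty$ by Lemma~\ref{lem-hatD}. The complementary half of the same theorem, \cite[Thm.~36.5]{sato2nd}, gives $\limsup_{t\to\infty} t^{-1}|\check D_t| = \infty$ a.s. Should a self-contained argument be preferred, one can pass to the discrete skeleton: the increments $\check D_n-\check D_{n-1}$, $n\in\N$, are i.i.d.\ copies of $\check D_1$, and $\EE|\check D_1|=\infty$ is equivalent to $\sum_{n\ge1}\PP(|\check D_1|>cn)=\infty$ for every $c>0$; the second Borel--Cantelli lemma then forces $|\check D_n-\check D_{n-1}|>cn$ for infinitely many $n$ a.s., whence $\limsup_{n\to\infty} n^{-1}|\check D_n|\ge c/2$, and letting $c\to\infty$ gives $\limsup_{t\to\infty} t^{-1}|\check D_t| \ge \limsup_{n\to\infty} n^{-1}|\check D_n| = \infty$ a.s.

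I expect this proof to be essentially routine: the real work -- recognising $\log|D_t|$ as an explicit additive Lévy process and computing its mean -- has already been done in Theorem~\ref{thm-detprocess} and Lemma~\ref{lem-hatD}. The only point warranting a little care is invoking the precise dichotomy in the SLLN for Lévy processes, namely that failure of the first-moment condition yields $\limsup_{t}t^{-1}|\check D_t|=\infty$ rather than merely a nonexistent or non-degenerate limit; the discrete-skeleton Borel--Cantelli argument sketched above provides a clean and elementary backup for this point.
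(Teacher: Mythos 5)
Your proposal is correct and follows exactly the paper's own route: the paper's proof is the one-line observation that the result follows from \cite[Thm.~36.5]{sato2nd} applied to the Lévy process $\check{D}$ whose characteristics and mean were computed in Lemma~\ref{lem-hatD}. Your additional Borel--Cantelli skeleton argument for part~(2) is a sound (if unnecessary) elementary backup.
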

\begin{proof}
	This follows immediately from \cite[Thm. 36.5]{sato2nd} and Lemma \ref{lem-hatD}.
\end{proof}

\begin{remark}
	Note that condition \eqref{eq-hatDmeancond} holds in particular if $\EE[\log M(\Id+L_1)]<\infty$. This is due to the fact that, since $\det(a)=\lambda_1 \cdots \lambda_d$, with $\lambda_1, \dots, \lambda_d$ being the eigenvalues of $a$ in decreasing order (with respect to absolute value), it holds 
	$$ \det(a) \le \norm{a}^d, \qquad |\det(a)| \ge |\lambda_d|^d = \norm{a^{-1}}^{-d},$$
	and hence $|\log \det(a)| \le d \log M(a)$.  In particular it follows from the approximation in Lemma~\ref{lemma-emeryapprox} that $\E [\log M(X_1)]<\infty$ is a sufficient condition for \eqref{eq-hatDmeancond}.
\end{remark}

Central limit theorems for Lévy processes have been studied in \cite{DoneyMaller02}. The adaptation of their results to the present situation yields the following theorem.

\begin{theorem}\label{thm-CLT-det}
	Let $L=(L_t)_{t\geq 0}$ be a Lévy process in $\RR^{d\times d}$ fulfilling \eqref{eq-nonsingular} and let $D=(D_t)_{t\geq 0}$ be the determinant process of its stochastic exponential defined in \eqref{eq-detprocess}. 
	Set
	$$T_1(x):= \nu_L\big(\{x\in\RR^{d\times d}: |\det(\Id+x)|\in (0,e^{-x})\cup (e^x,\infty)\}\big),$$
	and 
	$$T_2(x):=\nu_L\big(\{x\in\RR^{d\times d}: |\det(\Id+x)|\in  (e^x,\infty)\}\big) - \nu_L\big(\{x\in\RR^{d\times d}: |\det(\Id+x)|\in (0,e^{-x})\}\big). $$
	\begin{enumerate}
		\item Assume $T_1(x)=0$ for some $x>0$. If $\sigma_{\check{D}}= \sum_{m,n=1}^d \sigma_{(m,m),(n,n)}>0$, then 
		$$\frac{\log |D_t| - t \big(\gamma_{\check{D}} + T_2(1)+\int_{(1,\infty)} T_2(y) dy\big)}{\sqrt{\sigma_{\check{D}} \cdot t}} \overset{d}\longrightarrow \mathcal{N}(0,1), \quad t\to \infty.$$
		\item Assume $T_1(x)>0$ for all $x>0$. If
		$$\frac{\sigma_{\check{D}}+\int_{[-x,x]} z^2 \nu_{\check{D}}(dz)}{x^2 T_1(x)} \to \infty, \quad x\to \infty,$$
		then there exist measurable functions $a(t), b(t)>0$ such that
		$$\frac{\log|D_t| - a(t)}{b(t)}  \overset{d}\longrightarrow \mathcal{N}(0,1), \quad t\to \infty.$$
	\end{enumerate}
\end{theorem}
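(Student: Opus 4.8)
The plan is to reduce both statements to known central limit theorems for one-dimensional additive Lévy processes and then invoke Lemma~\ref{lem-hatD}. Recall from that lemma that $\check D_t = \log|D_t|$ is an additive Lévy process on $\RR$ with characteristic triplet $(\sigma_{\check D}, \gamma_{\check D}, \nu_{\check D})$, where $\nu_{\check D} = \Phi(\nu_L)|_{\RR\setminus\{0\}}$ for the map $\Phi(x) = \log|\det(\Id+x)|$. The key observation is that the quantities $T_1(x)$ and $T_2(x)$ defined in the statement are nothing but the tail functions of $\nu_{\check D}$: by the change-of-variables formula for image measures,
\begin{align*}
	T_1(x) &= \nu_{\check D}\big(\{z : |z| > x\}\big), \\
	T_2(x) &= \nu_{\check D}\big((x,\infty)\big) - \nu_{\check D}\big((-\infty,-x)\big),
\end{align*}
since $|\det(\Id+x)| \in (e^{x},\infty)$ corresponds to $\log|\det(\Id+x)| > x$, i.e. $\Phi(x) > x$, and analogously for the other inequalities. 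Thus the hypotheses of the theorem are exactly phrased in terms of the Lévy-measure tails of $\check D$.

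The first step is therefore to rewrite everything in terms of $\check D$. For part (i), the assumption $T_1(x)=0$ for some $x>0$ says that $\nu_{\check D}$ has bounded support; together with $\sigma_{\check D} > 0$ this is the classical situation in which an additive Lévy process with a nontrivial Gaussian part satisfies the ordinary CLT with centering $t\,\EE[\check D_1]$ and scaling $\sqrt{\sigma_{\check D}\, t}$. Here one has to check that the proposed centering constant $\gamma_{\check D} + T_2(1) + \int_{(1,\infty)} T_2(y)\,dy$ equals $\EE[\check D_1]$; this is a routine computation using the formula $\EE[\check D_1] = \gamma_{\check D} + \int_{|z|>1} z\, \nu_{\check D}(dz)$ from \cite[Ex. 25.12]{sato2nd} together with Fubini's theorem to express $\int_{|z|>1} z\,\nu_{\check D}(dz)$ via the tail function $T_2$ (integration by parts for Stieltjes measures). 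The CLT itself then follows from \cite{DoneyMaller02}, or even more elementarily since a Lévy process with bounded jumps and $\sigma_{\check D}>0$ has finite variance and one may invoke the standard CLT for Lévy processes (e.g. \cite[Chapter 48]{sato2nd}).

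For part (ii), the condition
$$
	\frac{\sigma_{\check D} + \int_{[-x,x]} z^2\, \nu_{\check D}(dz)}{x^2 T_1(x)} \to \infty, \qquad x\to\infty,
$$
is precisely the Doney–Maller criterion (in terms of the truncated second moment $V(x) := \sigma_{\check D} + \int_{[-x,x]} z^2\,\nu_{\check D}(dz)$ and the tail $T_1$) under which $\check D_t$, suitably centered and normalised, converges to a standard normal; see \cite[Thm. 3.5]{DoneyMaller02}. Applying that theorem to $\check D$ yields measurable functions $a(t)$ and $b(t)>0$ with $(\check D_t - a(t))/b(t) \overset{d}{\to} \mathcal N(0,1)$, which is the claim once we recall $\check D_t = \log|D_t|$.

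The only genuine obstacle is bookkeeping: one must verify carefully that the tail functions $T_1, T_2$ of $\nu_L$ pushed forward under $\Phi$ really coincide with the tails of $\nu_{\check D}$ on the whole half-line (paying attention to the exceptional set $S^c$ where $\det(\Id+x)=0$, which has $\nu_L$-measure zero by \eqref{eq-nonsingular} and hence does not contribute), and that the centering in part (i) matches $\EE[\check D_1]$ from Lemma~\ref{lem-hatD}. Beyond that, both parts are direct citations of the one-dimensional theory applied to $\check D$, so no new probabilistic content is needed.
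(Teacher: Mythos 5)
Your proposal is correct and follows essentially the same route as the paper: both reduce the claim to \cite[Thm.~3.5]{DoneyMaller02} applied to the one-dimensional Lévy process $\check D_t=\log|D_t|$, using the triplet from Lemma~\ref{lem-hatD} and the identification of $T_1,T_2$ as tail functions of $\nu_{\check D}$. The only step the paper spells out that you pass over quickly is the small Fubini computation showing that the hypothesis as stated (with the truncated second moment $\sigma_{\check D}+\int_{[-x,x]}z^2\,\nu_{\check D}(dz)$) is equivalent to the form $\sigma_{\check D}+2\int_0^x yT_1(y)\,dy$ appearing in \cite[Eq.~(3.12)]{DoneyMaller02}, but this is exactly the routine bookkeeping you flag.
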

\begin{proof}
	Both statements follow from \cite[Thm. 3.5]{DoneyMaller02} and the derived characteristics of $\check{D}$ in Lemma \ref{lem-hatD}. For  2. note that
	$$T_1(x) = \nu_{\check{D}}((x,\infty)) + \nu_{\check{D}}((-\infty, -x)),$$
	and hence 
	\begin{align*}
		\int_0^x y T_1(y)\, dy &= \int_0^x \int_{(y,\infty)} y \, \nu_{\check{D}}(dz) \,dy + \int_0^x \int_{(-\infty,y)} y\, \nu_{\check{D}}(dz)\, dy \\
		&= \int_{(0,\infty)} \int_0^{x\wedge z} y\, dy \,\nu_{\check{D}}(dz)  + \int_{(-\infty, 0)} \int_0^{x \wedge (-z)} y\, dy \,\nu_{\check{D}}(dz) \\
		&= \frac12 \left( \int_{(0,\infty)} (x\wedge z)^2 \nu_{\check{D}}(dz) + \int_{(-\infty, 0)} (x\wedge(-z))^2 \nu_{\check{D}}(dz) \right)\\
		&= \frac{x^2}{2} T_1(x) + \frac12 \int_{[-x,x]} z^2 \nu(dz).
	\end{align*}
	Thus the condition stated in \cite[Eq.(3.12)]{DoneyMaller02}, namely
	$$\frac{\sigma_{\check{D}} + 2\int_0^x y T_1(y)\, dy}{x^2 T_1(x)} \to \infty, \quad x\to \infty,$$
	is equivalent to the stated condition in 2.
\end{proof}

\appendix
\section{The stochastic exponential as Feller process}\label{appendix:stochastic.exponential.as.Feller.process}

 In this section we prove the formula for the generator of the stochastic exponential given in Proposition~\ref{prop:generatorXt}. We start with a lemma that provides us with a convenient SDE for the vectorised process $\vec{X}=(\vec{X}_t)_{t\geq 0}$ defined via $\vec{X}_t:= \vect(X_t)$, $t\geq 0$. 

\begin{lemma}\label{lem-vecX}
	Let $L=(L_t)_{t\geq 0}$ be a Lévy process in $\RR^{d\times d}$ fulfilling \eqref{eq-nonsingular}, let $X=(X_t)_{t\geq 0}$ be its stochastic exponential and $\vec{X}$ its vectorisation. Then 
	\begin{equation} \label{eq-SDEvect} d\vec{X}_t= \big(\Id \otimes \vect^{-1}(\vec{X}_{t-})\big) d\vec{L}_t,\quad t\geq 0. \end{equation}
\end{lemma}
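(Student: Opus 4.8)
The plan is to vectorise the integral form of the defining SDE \eqref{SDE} and to invoke the standard Kronecker-product identity $\vect(AB)=(\Id\otimes A)\,\vect(B)$, carried over from deterministic matrices to the stochastic integral entrywise.

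First I would rewrite \eqref{SDE} in its integral form $X_t=\Id+\int_{(0,t]}X_{s-}\,dL_s$; this is legitimate because the left-continuous adapted process $X_{s-}$ is predictable and locally bounded, so the (left) matrix stochastic integral is well defined. By the componentwise definition of matrix-valued stochastic integrals recalled in the Notation subsection, its $(i,j)$-entry is $\sum_{k=1}^d\int_{(0,t]}X_{s-}^{(i,k)}\,dL_s^{(k,j)}$. I would then apply $\vect$ to both sides. Since $\vect$ is merely a linear re-indexing of entries --- the coordinate $d(j-1)+i$ of $\vec a$ carrying $a^{(i,j)}$ --- and the stochastic integral is defined entry by entry, vectorisation commutes with integration, so the coordinate $d(j-1)+i$ of $\vect\big(\int_{(0,t]}X_{s-}\,dL_s\big)$ is again $\sum_k\int_{(0,t]}X_{s-}^{(i,k)}\,dL_s^{(k,j)}$.

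The second step is to recognise this as $\int_{(0,t]}\big(\Id\otimes\vect^{-1}(\vec X_{s-})\big)\,d\vec L_s$. Here $H_s:=\Id\otimes\vect^{-1}(\vec X_{s-})=\Id\otimes X_{s-}$ is a $d^2\times d^2$-valued predictable, locally bounded process, and the block structure of the Kronecker product gives $H_s^{(d(j-1)+i,\,d(l-1)+k)}=\Id^{(j,l)}X_{s-}^{(i,k)}=\delta_{jl}\,X_{s-}^{(i,k)}$; hence, using once more the componentwise definition of the stochastic integral (now with the semimartingale $\vec L$ and integrand $H$), the coordinate $d(j-1)+i$ of $\int_{(0,t]}H_s\,d\vec L_s$ is $\sum_{l,k}\int_{(0,t]}\delta_{jl}X_{s-}^{(i,k)}\,dL_s^{(k,j)}=\sum_k\int_{(0,t]}X_{s-}^{(i,k)}\,dL_s^{(k,j)}$, matching the previous computation. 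Together with $\vec X_0=\vect(\Id)$ this yields $\vec X_t=\vec X_0+\int_{(0,t]}\big(\Id\otimes\vect^{-1}(\vec X_{s-})\big)\,d\vec L_s$, which is precisely the integral form of \eqref{eq-SDEvect}.

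I do not expect a genuine obstacle here. The only point deserving a line of justification is the interchange of $\vect$ with stochastic integration, and this is immediate once one uses that matrix-valued stochastic integrals are defined componentwise, so that the whole argument reduces to reading off the elementary deterministic identity $\vect(X_{t-}\,dL_t)=(\Id\otimes X_{t-})\,d\vec L_t$ entry by entry.
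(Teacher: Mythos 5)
Your proposal is correct and follows the same route as the paper: vectorise the integral form of \eqref{SDE} and apply the Kronecker identity $\vect\big(\int H\,dM\big)=\int(\Id\otimes H)\,d\vect(M)$. The only difference is that the paper imports this identity as a citation (Lemma 2.1(vi) of the reference \cite{Behme2012b}), whereas you verify it directly from the componentwise definition of the matrix-valued stochastic integral given in the notation section --- a harmless, self-contained substitute for the citation.
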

\begin{proof}
	From \eqref{SDE} we obtain
	\begin{align*}
		\vec{X}_t&= \vect\bigg(\Id+ \int_{(0,t]} X_{s-} dL_s \bigg) = \vect(\Id) + \vect\bigg(\int_{(0,t]} X_{s-} dL_s \bigg)\\
		&= \vect(\Id) + \int_{(0,t]} (\Id \otimes X_{s-}) d(\vect(L_s)) = \vect(\Id) + \int_{(0,t]} \big(\Id\otimes \vect^{-1}(\vec{X}_{s-})\big) d\vec{L}_s,
	\end{align*}
where we have applied \cite[Lemma 2.1(vi)]{Behme2012b} for the equality between the lines. This implies the statement.
\end{proof}

 Now we are ready to prove Proposition \ref{prop:generatorXt}.

\begin{proof}[Proof of Proposition \ref{prop:generatorXt}.]
	We consider the vectorisation  $\vec{X}=(\vec{X}_t)_{t\geq 0}$ of the stochastic exponential and note that due to the definition of the generator as
	$$\mathcal{A}_X f(x) = \lim_{t\downarrow 0} \frac{\EE^x[f(X_t)] - f(x)}{t},$$
	we have $\mathcal{A}_{X} f = \mathcal{A}_{\vec{X}}(f\circ \vect^{-1}).$ We will thus focus on the derivation of $\mathcal{A}_{\vec{X}}$ for which we follow the procedure described in \cite[Section 6.1]{schnurrdiss}. \\
			 According to Lemma \ref{lem-vecX} the process $\vec{X}$ fulfils an SDE of the form
	$$d\vec{X}_t= \Psi(\vec{X}_{t-}) d\vec{L}_t, \quad t\geq 0,$$
	with $\Psi: \RR^{d^2} \to \RR^{d^2\times d^2}: x\mapsto (\Id \otimes \vect^{-1}(x))$ being Lipschitz continuous and of linear growth. Hence we can apply \cite[Thm. 1.1]{kuehn} and obtain that $\vec{X}$ is a rich Feller process if and only if
	\begin{equation}\label{eq-condrichFeller}
		\nu_L\left(\vect^{-1}\big( \{y\in\RR^{d^2}: \Psi(x)y\in B_r(-x)\}\big)\right) \overset{\|x\|\to\infty}\longrightarrow 0, \quad \text{for all }r>0.
	\end{equation}
	Hereby, via \cite[Prop. 7.1.9]{Bernstein},
	\begin{align*}
		A(x)&:= \{y\in\RR^{d^2}: \Psi(x)y\in B_r(-x)\} = \{y\in\RR^{d^2}: \norm{  (\Id \otimes \vect^{-1}(x)) y + x} \leq r \}\\
		&
		 = \{y\in\RR^{d^2}: \norm{  \vect( \vect^{-1}(x) \vect^{-1}(y) +\vect^{-1}(x))} \leq r \}\\
		 &= \{y\in\RR^{d^2}: \norm{  \vect^{-1}(x) (\vect^{-1}(y) + \Id) } \leq r \},
	\end{align*}
	and we note that $\mathds{1}_{A(x)}(y) \overset{\norm{x}\to \infty}\longrightarrow \mathds{1}_{\{y=-\vect \Id\}}$. Thus, slightly extending \cite[Example 4.4]{kuehn}, we observe that as $\nu_L$ is $\sigma$-finite, an application of Lebesgue's dominated convergence yields
	$$	\nu_L (\vect^{-1} A(x)) \overset{\norm{x}\to \infty}\longrightarrow \nu_L(\{-\Id\}) = 0,$$
	due to \eqref{eq-nonsingular}. \\
	Thus \ref{eq-condrichFeller} is fulfilled and, according to \cite[Thm. 1.1]{kuehn}, the symbol $p(x,\xi), x,\xi\in\RR^{d^2}$, of $\vec{X}$ is given by
	\begin{align*} 
		p(x,\xi) &= \psi_L(\Psi(x)^T \xi) = \psi_L\big((\Id\otimes \vect^{-1}(x))^T \xi\big)=: \psi_L(z(x,\xi)),
		\end{align*}
	with $\psi_L$ denoting the Lévy-Khintchine exponent of $\vec{L}$ given by
	\begin{align*}
		\psi_L(z)&= - i \vect(\gamma_L)^T z + \frac12 z^T \Sigma_L z - \int_{y\neq 0} \Big(e^{iz^Ty} - 1- iz^T y \mathds{1}_{\{ \norm{y} \leq 1\}}\Big) \vect(\nu_L(dy)).
	\end{align*}
	Hence
	\begin{align*}
		p(x,\xi) &= - i \vect(\gamma_L)^T (\Id\otimes \vect^{-1}(x))^T \xi + \frac12 \xi^T (\Id\otimes \vect^{-1}(x)) \Sigma_L (\Id\otimes \vect^{-1}(x))^T \xi \\
		& \quad - \int_{y\neq 0} \Big(e^{i y^T (\Id\otimes \vect^{-1}(x))^T \xi} - 1- i y^T (\Id\otimes \vect^{-1}(x))^T \xi \mathds{1}_{\{ \norm{y} \leq 1\}}\Big) \vect(\nu_L(dy))\\
		&= - i \vect(\gamma_L)^T (\Id\otimes \vect^{-1}(x))^T \xi + \frac12 \xi^T (\Id\otimes \vect^{-1}(x)) \Sigma_L (\Id\otimes \vect^{-1}(x))^T \xi \\
		& \quad - \int_{y\neq 0} \Big(e^{i ((\Id\otimes \vect^{-1}(x))y)^T \xi} - 1- i ((\Id\otimes \vect^{-1}(x))y)^T \xi \mathds{1}_{\{ \norm{ (\Id\otimes \vect^{-1}(x))y } \leq 1\}}\Big) \vect(\nu_L(dy))\\
		&\quad - i \int_{y\neq 0}  ((\Id\otimes \vect^{-1}(x))y)^T \left(  \mathds{1}_{\{ \norm{ (\Id\otimes \vect^{-1}(x))y } \leq 1\}}- \mathds{1}_{\{ \norm{y} \leq 1\}} \right) \vect\nu_L(dy)\, \xi\\
		&= - i \vec\ell(x)^T \xi + \frac12 \xi^T \vec{Q}(x) \xi   - \int_{z\neq 0} \Big(e^{i z^T \xi} - 1- i z^T \xi\,  \chi(z) \Big) \vec{N}(x,dz),
	\end{align*}
with the triplet $(\vec\ell(x), \vec{Q}(x), \vec{N}(x,dy))$ \begin{align*}
	\vec{\ell}(x)&= (\Id\otimes \vect^{-1}(x)) \vect(\gamma_L) \\
	&\qquad +  \int_{y\in\RR^{d^2}\setminus\{0\}}  (\Id\otimes \vect^{-1}(x))y  \left(  \mathds{1}_{\{ \norm{ (\Id\otimes \vect^{-1}(x))y } \leq 1\}}- \mathds{1}_{\{ \norm{y} \leq 1\}} \right) \vect\nu_L(dy),  \\
	\vec{Q}(x) &= (\Id\otimes \vect^{-1}(x)) \Sigma_L (\Id\otimes \vect^{-1}(x))^T, \quad \text{and} \\
	\vec{N}(x,dy)&= \vect(\nu_L(\vec{T}_x^{-1}(dy))) = \nu_L(\vect^{-1}(\vec{T}_x^{-1}(dy))),
\end{align*}
being the image measure of $\nu_L$ under the transform $\vec{T}_x\circ \vect$ with
$$\vec{T}_x(y): \RR^{d^2} \to \RR^{d^2} , y\mapsto (\Id\otimes \vect^{-1}(x)) y, \quad x\in\RR^{d^2},$$
and $\chi(z)=\mathds{1}_{\{ \norm{z} \leq 1\}}$ as cut-off function. Thus, as described in \cite[Section 6.1]{schnurrdiss}, the extended generator of $\vec{X}$ is 
\begin{align*}\cA_{\vec{X}} u(x) &= \vec{\ell}(x)^T  \nabla u(x) + \frac12 \sum_{i,j=1}^{d^2} \vec{Q}(x)^{(i,j)} \frac{\partial}{\partial x^{(i)}} \frac{\partial}{\partial x^{(j)}} u(x) \\
	&\quad  + \int_{y\neq 0} \Big( u(x+y) - u(x) -  y^T \nabla u(x)\mathds{1}_{\{\norm{y}\leq 1\}}  \Big) \vec{N}(x, dy),\end{align*}
for $u\in C^2_c(\RR^{d^2})$. \\
Using \cite[Prop. 7.1.9]{Bernstein} we observe that
\begin{align*}
	\vec{\ell}(x)&= \vect(\vect^{-1}(x) \gamma_L) +  \int_{y\in \RR^{d\times d}\setminus\{0\}}  \vect(\vect^{-1}(x) y)  \left(  \mathds{1}_{\{ \norm{ \vect^{-1}(x) y } \leq 1\}}- \mathds{1}_{\{ \norm{y} \leq 1\}} \right) \nu_L(dy),
\end{align*}
and hence
$$\vec\ell(x)^T \nabla u(x) = \sum_{i=1}^{d^2} \ell(x)^{(i)} \frac{\partial}{\partial x^{(i)}} u(x) = \sum_{i,j=1}^d \ell(\vect^{-1}x)^{(i,j)} \frac{\partial}{\partial (\vect^{-1}x)^{(i,j)}} f(\vect^{-1}(x))$$
for $f(z)=u(\vect(z))$ in $C_b^2(\RR^{d\times d})$ and 
$$\ell(z)= z \gamma_L +  \int_{y\in \RR^{d\times d}\setminus\{0\}}  z y  \left(  \mathds{1}_{\{ \norm{ z y } \leq 1\}}- \mathds{1}_{\{ \norm{y} \leq 1\}} \right) \nu_L(dy).$$
Similar considerations yield the form of the integral term. Lastly, likewise
\begin{align*}
	\sum_{i,j=1}^{d^2} \vec{Q}(x)^{(i,j)} \frac{\partial}{\partial x^{(i)}} \frac{\partial}{\partial x^{(j)}} u(x) &= \sum_{i,j,k,l=1}^d \big[(\Id \otimes z) \Sigma_L (\Id\otimes z)^T\big]^{(i+(j-1)d, k+(l-1)d)} \frac{\partial}{\partial z^{(i,j)}} \frac{\partial}{\partial z^{(k,l)}} f(z),
\end{align*}
with $z=\vect^{-1} x$, and
\begin{align*}
\big[(\Id \otimes z) \Sigma_L (\Id\otimes z)^T\big]^{(i+(j-1)d, k+(l-1)d)} &= \sum_{n,m=1}^{d^2} 	(\Id\otimes z)^{(i+(j-1)d,m)} \Sigma_L^{(m,n)} (\Id\otimes z^T)^{(n, k+(l-1)d)}\\
&= \sum_{n,m=1}^{d} 	z^{(i,m)} \Sigma_L^{(m+(j-1)d,n+(l-1)d)} z^{(k,n)}\\
&= \sum_{n,m=1}^{d} 	z^{(i,m)} \sigma_{(m,j),(n,l)} z^{(k,n)}.
\end{align*}
This finishes the proof.
\end{proof}

\end{document}